\tikzset{cross/.style={cross out, draw=black, fill=none, minimum size=2*(#1-\pgflinewidth), inner sep=0pt, outer sep=0pt}, cross/.default={2pt}}
\theoremstyle{plain}
\newtheorem{thm}{Theorem}[section]
\newtheorem{lem}[thm]{Lemma}
\newtheorem*{thm*}{Main Theorem}
\newtheorem*{prop*}{Proposition}
\newtheorem*{cor*}{Corollary}
\newtheorem{thmintro}{Theorem}
\newtheorem{corintro}[thmintro]{Corollary}
\theoremstyle{definition}
\newtheorem{mydef}[thm]{Definition}
\newtheorem*{mydef*}{Definition}
\newtheorem{exam}[thm]{Example}
\newtheorem{rem}[thm]{Remark}
\newtheorem*{quest*}{Question}
\newtheorem*{claim*}{Claim}
\newtheorem*{IndAssum*}{Induction Assumption}
\newtheorem{assumption}[thm]{Assumptions}
\newtheorem{notation}[thm]{Notation}
\DeclareMathOperator{\Ima}{Im}
\DeclareMathOperator{\diam}{diam}
\newcommand{\CAT}{{\rm CAT(0)}}
\newcommand{\adj}{\mathrm{Adj}}
\newcounter{mcomments}
\title[Boundaries of cocompact convex sets]{Contractibility of boundaries of cocompact convex sets and embeddings of limit sets}
\author{Corey Bregman}
\address{Department of Mathematics, University of Southern Maine, Portland, ME USA}
\email{corey.bregman@maine.edu}
\author{Merlin Incerti-Medici}
\address{Universit\"at Wien, Austria}
\email{merlin.medici@gmail.com}
\begin{document}

\maketitle

\begin{abstract}
We provide sufficient conditions as to when a boundary component of a cocompact convex set in a $\CAT$-space is contractible. We then use this to study when the limit set of a quasi-convex, codimension one subgroup of a negatively curved manifold group is `wild' in the boundary. The proof is based on a notion of coarse upper curvature bounds in terms of barycenters and the careful study of interpolation in geodesic metric spaces.
\end{abstract}

\tableofcontents




\section{Introduction} \label{sec:Introduction}

Let $(X,d)$ be a $\CAT$ space and $H$ a group acting properly and freely by isometries on $X$. Let $C$ be an $H$-invariant, closed, convex subset of $X$ and $\Sigma$ a connected component of the topological boundary $\partial C \subset X$. In this paper, we study the question whether and when $\Sigma$ is contractible. This question is motivated by a particular instance in which the visual boundary of $C$, denoted $\partial_{\infty}C$, is the limit set of the action $H \curvearrowright X$, which contains information about the wildness of the action of $H$ in $X$.

\subsection{Motivation from the differentiable case} \label{subsec:motivationfromthedifferentiablecase}

Suppose the metric space $X$ above is the universal covering of an $(n+1)$-dimensional, closed, negatively curved manifold $M$ with fundamental group $G$ and suppose $H < G$ is a quasi-convex codimension one subgroup. As a $\mathrm{CAT}(-a^2)$ space, $X$ has a visual boundary, which we denote by $\partial_{\infty} X$. Since $X$ is assumed to be an $(n+1)$-dimensional manifold, we know that $\partial_{\infty} X \cong S^{n}$. We can consider the limit set
\[ \Lambda(H) := \overline{ H \cdot x } \cap \partial_{\infty}X, \]
which does not depend on the choice of $x$. The topology of the canonical embedding $\Lambda(H) \hookrightarrow \partial_{\infty} X$ contains interesting information on how wild the action of $H$ on $X$ is. Let us consider the most basic case, in which $\Lambda(H) \cong S^{n-1}$. If $n = 2$, the Jordan curve theorem tells us that the embedding $\Lambda(H) \hookrightarrow \partial_{\infty} X$ is topologically conjugate to a standard embedding of a circle into a $2$-sphere (i.e.\,the embedding of an equator). As soon as $n \geq 3$, we no longer have such a result for general embeddings $S^{n-1} \hookrightarrow S^n$, as there exist examples like Alexander's horned sphere whose complement has a component that is not simply connected. However, one might suspect that the nearly transitive actions $H \curvearrowright \Lambda(H)$ and $G \curvearrowright \partial_{\infty} X$ prevent the limit set from embedding in a wild manner. Specifically, one might ask whether, under the assumptions above, the embedding $\Lambda(H) \hookrightarrow \partial_{\infty} X$ is always conjugate by a homeomorphism to the standard embedding $S^{n-1} \hookrightarrow S^n$. Results from geometric topology tell us that this is equivalent to the statement that the two connected components of $\partial_{\infty} X \setminus \Lambda(H)$ are both contractible, except perhaps when $n=4$ (cf.\,{\cite[pg. 479]{BestvinaMess91}}).

This turns out to be a surprisingly difficult problem. In {\cite[Theorem 4.1]{ApanasovTetenov88}}, Apanasov and Tetenov construct an explicit example of a group acting by isometries on $\mathbb{H}^4$ such that its limit set is a wild sphere in $S^3$. (Specifically, the complement of the limit set consists of one contractible and one non-contractible component.) Their example does not seem to be a quasi-convex subgroup of a group that acts geometrically on $\mathbb{H}^4$, which prevents it from providing a complete negative answer to the question raised above. Nevertheless, it suggests that the study of the question, whether the limit set of a quasi-convex, codimension one subgroup is a standard embedded sphere, is likely to involve some subtle necessary and sufficient conditions.\\

Since the embedding $\Lambda(H) \hookrightarrow \partial_{\infty}X$ is standard if and only if the two connected components of $\partial_{\infty}X \setminus \Lambda(H)$ are contractible, we can study the topology of $\partial_{\infty} X \setminus \Lambda(H)$ instead of the inclusion of $\Lambda(H)$. The topology of $\partial_{\infty} X \setminus \Lambda(H)$, in turn, can be described in terms of the topological boundary of a certain convex set:

For any closed subset $S \subset \partial_{\infty} X$, we define its closed convex hull $C(S) \subset X$ as the smallest closed convex set that contains all bi-infinite geodesics between points $\xi, \eta \in S$. Since $C(S)$ is a convex subset, we can consider the set of points in $\partial_{\infty} X$ that can be represented by a geodesic ray in $C(S)$. This set yields a canonical embedding of visual boundaries $\partial_{\infty}C(S) \hookrightarrow \partial_{\infty} X$. We emphasize the distinction between the visual boundary $\partial_{\infty} C(S) \subset \partial_{\infty}X$ and the topological boundary $\partial C(S) \subset X$. A result of Anderson that states that, if $X$ has negative upper and lower curvature bounds, then for every closed subset $S \subset \partial_{\infty} X$, we have $S = \partial_{\infty} C(S)$ (see Theorem \ref{thm:Andersonresult} or {\cite[Theorem 3.3]{Anderson83}}). Since such bounds on curvature are given under our assumptions in this subsection, we put $C := C(\Lambda(H))$ and obtain $\partial_{\infty} C = \Lambda(H)$. We also define $C_{\epsilon}$ to be the closed $\epsilon$-neighbourhood of $C$, where $\epsilon > 0$. We note that $\partial_{\infty} C_{\epsilon} = \partial_{\infty} C$, but its topological boundary $\partial C_{\epsilon}$ has some useful properties that we exploit.

Since $X$ is negatively curved and $C$ is closed and convex, there exists a closest-point projection map $p_C : X \rightarrow C$. Note that $p_C$ is $H$-equivariant. This map admits an $H$-equivariant, continuous extension to $p_C : \partial_{\infty}X \setminus \Lambda(H) \rightarrow \partial C$. 
This extension provides us for every $\epsilon > 0$ with a homeomorphism $\partial_{\infty} X \setminus \Lambda(H) \rightarrow \partial C_{\epsilon}$. Contractibility of the connected components of $\partial_{\infty} X \setminus \Lambda(H)$ is thus equivalent to the contractibility of the connected components of $\partial C_{\epsilon}$. This leads us back to the central question studied in this paper: the contractibility of the components of $\partial C_{\epsilon}$. 

\subsection{Main result}

Let $X$ be a $\CAT$ space and let $H$ act properly and freely by isometries on $X$. Given two points $x, y \in X$, we denote the unique arc-length parametrised geodesic from $x$ to $y$ by $\gamma_{xy}$. Let $C \subset X$ be an $H$-invariant, closed, convex subset, $Y$ a connected component of $X \setminus C$, 
and assume that $H$ preserves $Y$. (If $X \setminus C$ has finitely many connected components, then $H$ has a finite index subgroup that preserves all connected components of $X \setminus C$. More generally, one may choose the subgroup of $H$ that preserves one connected component of $X \setminus C$.) We say that $C$ is {\it $H$-cocompact} if $H$ acts cocompactly on $C$. In order to get some better properties of the boundary of our convex set, we work with the $\epsilon$-neighbourhood $C_{\epsilon}$ rather than $C$. Let $\Sigma_{\epsilon}$ be an $H$-invariant connected component of $\partial C_{\epsilon}$, and let $Y_{\epsilon}$ be the connected component of $X \setminus C_{\epsilon}$ whose closure contains $\Sigma_{\epsilon}$. As discussed in the previous section, this is compatible with the application that we have in mind.\\

Our first key notion is the idea of a push-off grid. Since it takes several technical definitions to define a push-off grid and all its desirable properties precisely, we provide a heuristic terminology here that allows us to state our results. For the precise terminology, we refer to definitions \ref{def:pushoffgrid}, \ref{def:deltatight}, \ref{def:smalldeltarelativetoR}, and \ref{def:barycentersuptoH}. (Numbered definitions here in the introduction are precise.)

Let $\delta > 0$ and let $N_{\delta}(\cdot)$ denote the closed $\delta$-neighbourhood. A {\it $\delta$-grid} of $C_{\epsilon}$ is a discrete set of points $S \subset C_{\epsilon}$ such that
\[ N_{\delta}(S \cap \Sigma_{\epsilon}) \supset \Sigma_{\epsilon} \quad \text{and} \quad N_{\delta}(S) \supset C_{\epsilon}. \]
A {\it push-off grid of $C_{\epsilon}$ through $\Sigma_{\epsilon}$} is, roughly speaking, an $H$-equivariant map
\[ \iota : S \rightarrow Y_{\epsilon} \]
where $S$ is an $H$-invariant $\delta$-grid of $C_{\epsilon}$, such that for every $q \in S$, the geodesic $\gamma_{q\iota(q)}$ intersects $\Sigma_{\epsilon}$ in exactly one point. (If $q$ lies in the interior of $C_{\epsilon}$ this condition is always satisfied; here we use that we are working with $C_{\epsilon}$ instead of $C$. The problem lies with the $q \in S \cap \Sigma_{\epsilon}$.) We define the {\it diameter of a push-off grid} to be
\[ \diam_{intro}(\iota) := \sup\{ d( \iota(p), \iota(q) ) \vert p,q \in S, d(p,q) \leq 2\delta \}. \]


The basic idea of our construction is that, if we can find a push-off grid with some good properties, we can use this push-off grid to construct a retraction from $C$ to $\Sigma$. Doing so requires to construct a push-off grid with small diameter. This can be constructed if the image of the push-off grid sits inside a space with the following critical property.

\begin{mydef} \label{def:lambdaBarycentersIntro}
Let $Z$ be a metric space, $\lambda \in [\frac{1}{2}, 1)$, and $P \subset Z$ a finite set. We call a point $b \in Z$ a {\it $\lambda$-barycenter} of $P$ if
\[ \forall p \in P : d(b, p) \leq \lambda \cdot \diam(P). \]
If, additionally, $Q \subset Z$ is a finite subset, we call $b \in Z$ a {\it $\lambda$-barycenter of $P$ relative to $Q$} if it is a $\lambda$-barycenter of $P$ and, additionally,
\[ \forall q \in Q : d(b, q) \leq \diam( \{ q \} \cup P). \]

Let $\Delta > 0$. We say that {\it $Z$ has $\lambda$-barycenters up to diameter $\Delta$}, if for any two finite sets $P, Q \subset Z$ such that $\diam(P) \leq \Delta$ and $\diam(P \cup Q) \leq 2\Delta$, there exists a $\lambda$-barycenter of $P$ relative to $Q$.
\end{mydef}

We will discuss examples of spaces that have $\lambda$-barycenters in section \ref{subsec:Examplesandopenquestions}. For now, we simply state that injective metric spaces and $\CAT$ spaces have $\lambda$-barycenters up to any diameter and the existence of round spheres in a space puts a bound on the diameter up to which $\lambda$-barycenters can exist. In some sense, this property can thus be understood as a form of upper curvature bound.\\

We need some more terminology in order to state our results.

\begin{mydef} \label{def:diameterrelativetoHK}
    Let $\epsilon > 0$, $K \subset C_{\epsilon}$ be a compact set such that $HK = C_{\epsilon}$, and $K_{out} \subset X \setminus C_{\epsilon}$ a compact set. We define the {\it diameter of $K_{out}$ relative to $H$ and $K$} to be
    \[ \diam_K(K_{out}) := \sup \{ \diam( hK_{out} \cup K_{out}) \vert hK \cap K \neq \emptyset \}. \]
\end{mydef}


We note that we have the following bound:
\[ \diam_K(K_{out}) \leq 2( \diam(K) + \inf\{ d(p,q) \vert p \in K, q \in K_{out} \} + \diam(K_{out}) ). \]
We emphasize that the diameter of $K_{out}$ relative to $H$ and $K$ can be very different from the expression
\[ \sup\{ \diam( hK_{out} \cup K_{out}) \vert hK_{out} \cap K_{out} \neq \emptyset \} \leq 2\diam(K_{out}). \]
The following example illustrates that $\diam_K(K_{out})$ cannot be bounded in terms of the $\diam(K_{out})$.

\begin{exam} Consider the cylinder $C = D \times \mathbb{R} \subset \mathbb{R}^3$ with its boundary $\Sigma = S^1 \times \mathbb{R}$. The group $H := \faktor{ \mathbb{Z} }{ 100 \mathbb{Z} } \times \mathbb{Z}$ acts cocompactly on this cylinder and one obtains a fundamental domain whose closure looks like a sector of the disk $D$ times a bounded interval. We let $K$ be the closure of this fundamental domain and let $K_{out} = \{ p \}$ be some point in $\mathbb{R}^3$ far away from the cylinder. The action $H \curvearrowright C$ preserves the central axis of $C$ and, in particular, all elements $( [n], 0) \in H$ satisfy $([n],0) \cdot K \cap K \neq \emptyset$. Therefore, $\diam_K(K_{out}) \geq \sup\{ d(p, ([n], 0) \cdot p) \vert n \in \{0, \dots, 99\} \}$. If the distance between $p$ and $C$ is large, then the rotation around the axis of $C$ moves $p$ far away from itself, making $\diam_K(K_{out})$ large. 
\end{exam}

\begin{mydef*}[cf. Definition \ref{def:barycentersuptoH}]
    Let $\lambda \in [\frac{1}{2},1)$. We say that {\it $C_{\epsilon}$ has $\lambda$-barycenters up to $H$}, if there exist
    \begin{itemize}
        \item $\delta, \delta' > 0$ 'sufficiently small',

        \item a compact set $K$ containing a fundamental domain of $H \curvearrowright C_{\epsilon}$,

        \item an $H$-invariant $\delta$-grid $S$,

        \item a push-off grid $\iota : S \rightarrow Y_{\epsilon}$ with $\diam_{intro}(\iota) \leq \delta'$,

        \item a closed, $H$-invariant, $H$-cocompact subset $Z \subset Y_{\epsilon}$ and a compact set $K_{out}$ containing a fundamental domain of $H \curvearrowright Z$,
    \end{itemize}
    such that the following holds:
    
    \begin{enumerate}
        \item $\iota(S \cap K) \subset K_{out}$,

        \item $Z$ has $\lambda$-barycenters up to diameter $\diam_K(K_{out})$.
    \end{enumerate}
    
\end{mydef*}



\begin{thmintro} \label{thmintro:contractibilityofSigma}
Let $X$ be a $\CAT$ space, let $H$ act properly, freely, by isometries on $X$, and let $C \subset X$ be an $H$-invariant, $H$-cocompact, closed, convex subset.

Let $\epsilon > 0$, $C_{\epsilon}$ be the $\epsilon$-neighbourhood of $C$, and $\Sigma_{\epsilon}$ be an $H$-invariant connected component of $\partial C_{\epsilon}$. If there exist $\lambda \in [\frac{1}{2}, 1)$ such that $C_{\epsilon}$ has $\lambda$-barycenters up to $H$, then $\Sigma_{\epsilon}$ is contractible.
\end{thmintro}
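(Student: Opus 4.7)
The plan is to exhibit $\Sigma_\epsilon$ as a retract of $C_\epsilon$. Since $C_\epsilon$ is a closed, convex subset of the $\CAT$ space $X$, it is contractible, and any retract of a contractible space is contractible; so it suffices to construct a continuous, $H$-equivariant retraction $\rho : C_\epsilon \to \Sigma_\epsilon$.

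First, I would extend the push-off $\iota : S \to Z$ to a continuous, $H$-equivariant map $\beta : C_\epsilon \to Z$ by a nerve-type interpolation. Cover $C_\epsilon$ by the open $\delta$-balls around the grid points $S$, form the nerve $\mathcal{N}$ of this cover together with the canonical partition-of-unity map $\pi : C_\epsilon \to |\mathcal{N}|$, and extend $\iota$ to a continuous map $|\mathcal{N}| \to Z$ simplex-by-simplex by iterated $\lambda$-barycenters. On a simplex spanned by $s_0,\dots,s_k \in S$, the pairwise distances satisfy $d(s_i,s_j) \le 2\delta$, so $\{\iota(s_i)\}$ has diameter at most $\diam_{intro}(\iota) \le \delta'$, well within the range $\diam_K(K_{out})$ up to which $Z$ admits $\lambda$-barycenters. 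Barycentrically subdividing the simplex and placing a $\lambda$-barycenter at each new vertex contracts the diameter by the factor $\lambda < 1$; iterating yields a Cauchy sequence of continuous maps whose uniform limit is the desired extension. Equivariance is preserved by making all choices $H$-equivariantly on a fundamental domain and transporting.

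Next, for each $p \in C_\epsilon$ the geodesic $\gamma_{p\beta(p)}$ runs from $p \in C_\epsilon$ to a point $\beta(p) \in Z \subset Y_\epsilon$, and hence must cross $\Sigma_\epsilon$. The push-off grid hypothesis guarantees a \emph{unique} such crossing when $p \in S$; the key technical lemma I would establish is that, under the ``sufficiently small $\delta, \delta'$'' assumptions built into the hypothesis that $C_\epsilon$ has $\lambda$-barycenters up to $H$, uniqueness persists for every $p \in C_\epsilon$. This should follow by comparing $\gamma_{p\beta(p)}$ to nearby geodesics $\gamma_{q\iota(q)}$ for grid points $q$ close to $p$ and invoking convexity of distance in $\CAT$ spaces. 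Defining $\rho(p) := \gamma_{p\beta(p)} \cap \Sigma_\epsilon$ then gives a continuous, $H$-equivariant map, since geodesics depend continuously on their endpoints in a $\CAT$ space. For $p \in \Sigma_\epsilon$ the geodesic leaves $C_\epsilon$ at $p$ itself, so by uniqueness $\rho(p) = p$ and $\rho$ is a retraction.

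The hardest step is the barycentric interpolation producing $\beta$: one must simultaneously keep every intermediate diameter within the range on which $\lambda$-barycenters are guaranteed (this is precisely why the hypothesis couples the push-off diameter $\delta'$ to $\diam_K(K_{out})$), make compatible choices across overlapping simplices so the resulting map is globally continuous, and do so $H$-equivariantly. A secondary technical point is the uniqueness-of-crossing lemma used to upgrade $\beta$ to $\rho$, which is what the ``sufficiently small'' quantifiers on $\delta$ and $\delta'$ in the definition of a push-off grid are designed to purchase.
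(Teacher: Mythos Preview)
Your overall architecture matches the paper's: realise $\Sigma_\epsilon$ as a retract of the convex (hence contractible) set $C_\epsilon$ by composing a partition-of-unity map $\Psi:C_\epsilon\to N(H\mathcal U)$ with a continuous ``push-off'' $j$ into $Y_\epsilon$, and then sending each $p\in C_\epsilon$ to the unique point where the geodesic $\gamma_{p,\,j\circ\Psi(p)}$ meets $\Sigma_\epsilon$.

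There is, however, a genuine divergence in how you build the continuous extension, and your version is where the argument would stall. You propose to iterate barycentric subdivision infinitely often and take a ``uniform limit of continuous maps'' into $Z$. But at stage $n$ you only have $\iota_n$ on the \emph{vertices} of the $n$-th subdivision; there is no continuous map to take a limit of unless you already interpolate inside simplices, and $Z$ is merely a metric space with $\lambda$-barycenters for finite sets --- it carries no a priori structure letting you fill simplices continuously. Even if you instead take the pointwise limit $\beta(x)=\lim_n\iota_n(v_n)$ along vertex sequences, you only know the limit lands in $\overline{Z}\subset X$, not in $Z$. The paper sidesteps all of this: it performs only \emph{finitely} many $\lambda$-shrinking subdivisions, enough to bring $\diam(\iota_n)\le\delta'$, and then extends $\iota_n$ to a continuous $j:N(H\mathcal U)\to X$ simplex-by-simplex using that convex hulls in the ambient $\CAT$ space $X$ are contractible (so each $j\vert_{\partial\sigma}$ extends over $\sigma$ inside $C(\iota_n(\sigma^{(0)}))$). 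The image of $j$ then lies in $N_{\delta'}(\iota_n(S_n^{(0)}))\subset Y_\epsilon$ because $\delta'<d_{\text{push-off}}(\iota_n)$, which is exactly the condition $Z\subset Y_{\epsilon+\delta'}$ built into Definition~\ref{def:barycentersuptoH}. So the barycenter hypothesis is used only to shrink the discrete grid; the continuous fill-in happens in $X$, not in $Z$.

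Your uniqueness-of-crossing discussion is also softer than what is actually needed. For $p$ in the \emph{interior} of $C_\epsilon$ uniqueness is immediate from convexity of $t\mapsto d(\gamma(t),C)$, but for $p\in\Sigma_\epsilon$ the geodesic could in principle dip back into $C_\epsilon$ before exiting. The paper's mechanism is the angle condition: one arranges $\angle_p(j\circ\Psi(p),C)>\pi/2$, and Lemma~\ref{lem:largeangleimpliesgrowingdistance} then forces the geodesic to leave $C_\epsilon$ immediately and never return. This is precisely what ``$(\delta,\delta')$ small relative to $K,K_{out},\alpha$'' (Definition~\ref{def:smalldeltarelativetoR}) encodes --- continuity of $(o,p,q)\mapsto\angle_o(p,q)$ on compacta guarantees that moving the base point by $\le\delta$ and the target by $\le\delta'$ perturbs the angle by less than $\frac{\alpha}{2}-\frac{\pi}{4}$. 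Your ``compare to nearby grid geodesics via convexity'' gestures at this but does not isolate the angle as the controlling quantity.
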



As discussed in section \ref{subsec:motivationfromthedifferentiablecase}, we are interested in the contractibility of the connected components of $\partial C_{\epsilon}$, as this is equivalent to the contractibility of the connected components of $\partial_{\infty} X \setminus \partial_{\infty} C$, provided that $X$ is a sufficiently nice space (e.g.\,a Cartan-Hadamard manifold with curvature in $[-b^2, -1]$). We can obtain a variation of the result above that is more suitable when working with the boundary at infinity. We require the following notation.

Suppose $X$ is a Cartan-Hadamard manifold with sectional curvature in $[-b^2, -1]$, $H$ is a group acting properly, freely, and by isometries on $X$, and $C \subset X$ a closed, $H$-invariant, $H$-cocompact, convex subset. By \cite{Walter76}, the topological boundary of $C_{\epsilon}$ is a $C^{1,1}$-manifold and thus there exists a continuous normal vector field $N$ along $\partial C_{\epsilon}$ pointing outwards of $C_{\epsilon}$. Varying $\epsilon > 0$, we obtain a continuous, $H$-invariant normal vector field $N$ on $X \setminus C$ that points away from $C$. (The vector field $N$ is also the gradient of the function $d( \cdot, C)$.) We denote the flow along $N$ by $\Phi_N$ and define
\[ \Phi_N^{\infty} : X \setminus C \rightarrow \partial_{\infty} X \setminus \partial_{\infty} C, \qquad p \mapsto \lim_{t \rightarrow \infty} \Phi_N^t(p). \]
For every $\epsilon > 0$, this map defines a homeomorphism $\partial C_{\epsilon} \approx \partial_{\infty} X \setminus \partial_{\infty} C$. Our main result on contractibility of components in the visual boundary is now as follows.

\begin{thmintro} \label{thmintro:contractibilityinboundary}
    Let $X$ be a Cartan-Hadamard manifold with sectional curvature in $[-b^2, -1]$, $H$ be a group acting properly, freely, and by isometries on $X$, $C \subset X$ be a closed, $H$-invariant, $H$-cocompact convex subset, $\epsilon > 0$, $\Sigma_{\epsilon}$, an $H$-invariant connected component of $\partial C_{\epsilon}$, and $Z$ the connected component of $\partial_{\infty} X \setminus \partial_{\infty} C$ corresponding to $\Sigma_{\epsilon}$.

    Let $K \subset C_{\epsilon}$ be a compact set such that $HK = C_{\epsilon}$ and $K_{out} := \Phi_N^{\infty}( K \cap \Sigma_{\epsilon} )$, where $\Phi_n^{\infty} = \lim_{t \rightarrow \infty} \Phi_n^t$.

    If there exists $\lambda \in [\frac{1}{2}, 1)$ such that $Z$ has $\lambda$-barycenters up to diameter $\diam_K(K_{out})$, then $Z$ is contractible.
\end{thmintro}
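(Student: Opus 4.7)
The plan is to reduce Theorem \ref{thmintro:contractibilityinboundary} to Theorem \ref{thmintro:contractibilityofSigma} by transferring the $\lambda$-barycenter structure from the visual boundary component $Z$ to an $H$-cocompact subset of $Y_\epsilon$ via the normal flow $\Phi_N$. Since $\Phi_N^\infty$ restricts to an $H$-equivariant homeomorphism $\Sigma_\epsilon \to Z$, contractibility of $Z$ is equivalent to contractibility of $\Sigma_\epsilon$, so it suffices to produce data making $C_\epsilon$ have $\lambda$-barycenters up to $H$ and then invoke Theorem \ref{thmintro:contractibilityofSigma}.

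First I would construct the push-off grid. For a suitable $t_0 > 0$, set $Z' := \Phi_N^{t_0}(\Sigma_\epsilon)$, which is a closed, $H$-invariant, $H$-cocompact subset of $Y_\epsilon$ contained in $\partial C_{\epsilon + t_0}$, with compact fundamental domain $K_{out}' := \Phi_N^{t_0}(K \cap \Sigma_\epsilon)$. Choose an $H$-invariant $\delta$-grid $S$ of $C_\epsilon$ and define $\iota : S \to Y_\epsilon$ by flowing outward: for $q \in S \cap \Sigma_\epsilon$ set $\iota(q) := \Phi_N^{t_0}(q)$, and for $q$ in the interior of $C_\epsilon$ extend $q$ along the gradient ray of $d(\cdot, C)$ past $\Sigma_\epsilon$ by an additional time $t_0$. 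Because the flow lines of $\Phi_N$ are geodesics perpendicular to the level sets of $d(\cdot, C)$ and $C_\epsilon$ is convex, each geodesic $\gamma_{q\iota(q)}$ crosses $\Sigma_\epsilon$ in exactly one point. Continuity of the flow gives $\diam_{intro}(\iota) \leq \delta'$ for $\delta$ sufficiently small, and $\iota(S \cap K) \subset K_{out}'$ by construction.

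The crucial step is verifying that $Z'$ has $\lambda$-barycenters up to diameter $\diam_K(K_{out}')$ using the hypothesis on $Z$. The restriction of $\Phi_N^\infty$ gives an $H$-equivariant homeomorphism $Z' \to Z$, and \CATminus{} geometry (available since the sectional curvature of $X$ is $\leq -1$) controls how the induced metric on the level set $\partial C_{\epsilon + t_0}$ relates to the visual metric on $Z$: the two are proportional up to a multiplicative distortion bounded in terms of the curvature pinching $[-b^2, -1]$. Given finite sets $P, Q \subset Z'$ within the required diameter bounds, push forward to $P_\infty := \Phi_N^\infty(P)$ and $Q_\infty := \Phi_N^\infty(Q)$ in $Z$, obtain a $\lambda$-barycenter $b_\infty$ of $P_\infty$ relative to $Q_\infty$ by hypothesis, and pull back to the unique $b \in Z'$ with $\Phi_N^\infty(b) = b_\infty$. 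The $\lambda$-barycenter inequalities for $b$ in the restricted $X$-metric should then follow from those for $b_\infty$ in $Z$, since $\diam_K(K_{out}')$ and $\diam_X(P)$ scale identically with $\diam_K(K_{out})$ and $\diam_Z(P_\infty)$ under the flow.

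The main obstacle will be controlling the multiplicative distortion in this scaling so that the $\lambda$-barycenter inequality is preserved exactly rather than only approximately. I expect this can be handled either by choosing $t_0$ large enough that the distortion becomes negligible on the relevant diameter scale, or by a limiting argument as $t_0 \to \infty$ in which the push-off grid and its auxiliary data converge to boundary data in a controlled way. Once the $\lambda$-barycenter property holds on $Z'$, Theorem \ref{thmintro:contractibilityofSigma} yields contractibility of $\Sigma_\epsilon$, hence of $Z$ via the homeomorphism $\Phi_N^\infty$.
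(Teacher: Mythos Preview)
Your reduction to Theorem~\ref{thmintro:contractibilityofSigma} has a genuine gap at the ``crucial step'': the $\lambda$-barycenter property is a purely metric condition involving ratios of distances, and it is \emph{not} preserved under bi-Lipschitz maps. If $f : (Z,\rho) \to (Z', d_X)$ is $L$-bi-Lipschitz and $b$ is a $\lambda$-barycenter of $P$ in $Z$, the best you can conclude is that $f(b)$ is an $L^2\lambda$-barycenter of $f(P)$, which may well satisfy $L^2\lambda \geq 1$. Your first proposed fix (``choosing $t_0$ large enough that the distortion becomes negligible'') does not work: the map $\Phi_N^{\infty}\vert_{\Sigma_{\epsilon+t_0}}$ compares the ambient metric $d_X$ on a level set with a visual metric $\rho_o$ based at a fixed point, and there is no mechanism forcing this to become an asymptotic similarity as $t_0 \to \infty$; in variable pinched curvature the local expansion rates along different normal geodesics differ by factors controlled by $b$, and these factors do not disappear in the limit. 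Your second proposed fix (``a limiting argument as $t_0 \to \infty$'') would send the push-off grid itself into $Z \subset \partial_\infty X$, at which point you are no longer in the setting of Theorem~\ref{thmintro:contractibilityofSigma} at all.

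The paper does not attempt this reduction. Instead it reruns the machinery of Section~\ref{sec:RetractingtotheBoundary} with the push-off grid taking values directly in $Z \subset \partial_\infty X$, so that the $\lambda$-barycenter hypothesis on $(Z,\rho)$ is used as stated, with no transfer. Two ingredients make this work: the angle $\angle_q(\cdot, C)$ extends continuously to ideal arguments, so the push-off condition still makes sense; and for a compact $K_{out} \subset Z$ one shows (using Bowditch's estimate on convex hulls in pinched negative curvature) that finite subsets of $K_{out}$ of sufficiently small $\rho$-diameter have convex hull in $X$ disjoint from $C_\epsilon$. This replaces condition~(2) in the definition of $(\delta,\delta')$ being small, after which Lemma~\ref{lem:extendingpushoffgridtopushoff} goes through verbatim with $\iota_n : S_n^{(0)} \to Z$ and the filling of each subdivided simplex landing in the convex hull of finitely many ideal points.
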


Suppose, as in subsection \ref{subsec:motivationfromthedifferentiablecase}, that $X$ is the universal covering of an $(n+1)$-dimensional, closed, negatively curved manifold $M$ with fundamental group $G$ and suppose $H < G$ is a quasi-convex codimension one subgroup and $C = C( \Lambda(H) )$. Let $Z$ be an $H$-invariant connected component of $\partial_{\infty} X \setminus \Lambda(H)$. (If $\partial_{\infty} X \setminus \Lambda(H)$ has finitely many connected components, there is a finite index subgroup $H_0 < H$ that preserves each connected component and we can assume without loss of generality that we started with $H_0$.) Contractibility of $Z$ has some immediate consequences. Since $H$ acts freely, properly discontinuously, and cocompactly on $Z$ (see \cite{Swenson01}), contractibility of $Z$ implies that $\faktor{Z}{H}$ is a manifold $K(H, 1)$. By Bestvina-Mess \cite{BestvinaMess91}, this implies that $\Lambda(H)$ is a homology sphere. We thus obtain the following corollary.

\begin{corintro}
    Let $X$, $H$, $C$, and $C_{\epsilon}$ be as in the previous paragraph. Let $Z$ be an $H$-invariant connected component of $\partial_{\infty} X \setminus \Lambda(H)$ and suppose there exists a compact set $K \subset C_{\epsilon}$ such that $HK = C_{\epsilon}$ and such that $Z$ has $\lambda$-barycenters up to diameter $\diam_K( \Phi_N^{\infty}(K \cap \Sigma_{\epsilon}) )$. Then $\Lambda(H)$ is a homology sphere.
\end{corintro}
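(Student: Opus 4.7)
The plan is to chain together Theorem B with the two results already cited in the paragraph immediately preceding the corollary. First I verify that the hypotheses of Theorem B hold in the present setting. After rescaling the metric on $M$, the universal cover $X$ is a Cartan--Hadamard manifold with sectional curvature in $[-b^2,-1]$; the subgroup $H < G = \pi_1(M)$ inherits a proper, free, isometric action; and quasi-convexity of $H$ guarantees that $C = C(\Lambda(H))$ is a closed, convex, $H$-invariant, $H$-cocompact subset of $X$. The $\lambda$-barycenter hypothesis of the corollary is exactly the hypothesis of Theorem B applied to our $Z$. Therefore Theorem B applies and yields that $Z$ is contractible.

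Next I would invoke Swenson's theorem that $H$ acts freely, properly discontinuously, and cocompactly on each $H$-invariant complementary component of $\Lambda(H)$ in $\partial_\infty X$; in particular this gives such an action on $Z$. Because $Z$ is an open, connected subset of $\partial_\infty X \cong S^n$, it is a connected topological $n$-manifold, and the free, proper, cocompact action of $H$ makes the quotient $Z/H$ a closed $n$-manifold. Combined with the contractibility established in the previous step, this shows that $Z/H$ is a closed aspherical $n$-manifold with fundamental group $H$, hence a $K(H,1)$.

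Finally I would apply the theorem of Bestvina--Mess. The group $H$ is word hyperbolic, being quasi-convex in the hyperbolic group $G$, and its Gromov boundary $\partial H$ is canonically identified with the limit set $\Lambda(H) \subset \partial_\infty X$. The Bestvina--Mess identification $H^{*}(H;\mathbb{Z}H) \cong \check{H}^{*-1}(\partial H;\mathbb{Z})$, together with Poincar\'e duality applied to the closed aspherical $n$-manifold $Z/H$ as a $K(H,1)$, then forces the reduced \v{C}ech cohomology of $\Lambda(H)$ to coincide with that of $S^{n-1}$. This is exactly the statement that $\Lambda(H)$ is a homology $(n-1)$-sphere.

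There is no genuine obstacle here beyond tracking dimensions carefully: $\dim X = n+1$, $\dim \partial_\infty X = n$, $Z$ is an open $n$-manifold, and $\Lambda(H)$ is $(n-1)$-dimensional; once Theorem B supplies contractibility, the corollary is a straightforward packaging of the Swenson and Bestvina--Mess results.
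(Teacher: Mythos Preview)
Your proposal is correct and follows the same route as the paper: apply Theorem~B to obtain contractibility of $Z$, invoke Swenson to get a free, proper, cocompact $H$-action on $Z$ so that $Z/H$ is a closed manifold $K(H,1)$, and then appeal to Bestvina--Mess to conclude that $\Lambda(H)$ is a homology sphere. The paper compresses all of this into the paragraph immediately preceding the corollary and does not spell out the Poincar\'e-duality step of the Bestvina--Mess argument, but the structure is identical to yours.
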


\subsection{Examples and open questions regarding $\lambda$-barycenters} \label{subsec:Examplesandopenquestions}

Definition \ref{def:lambdaBarycentersIntro} is a somewhat unusual definition when compared to the literature that the authors are aware of. This raises the question whether the property of having $\lambda$-barycenters can be contextualised within existing terminology. We present several crucial examples that lead to a series of questions about this property.

\begin{exam}[Model case: simplicial complexes]
    Let $S$ be a (locally finite, finite-dimensional) simplicial complex. We can equip $S$ with a metric by equipping every simplex with the euclidean metric such that all edges have length $\sqrt{2}$. (If we view an $n$-simplex as the set of solutions to the equation $\sum_{i=0}^n x_i = 1$ that satisfy $x_i \geq 0$ for all $i$, we naturally obtain this metric on the simplex.) Let $\sigma$ be a simplex in $S$, $P$ the set of vertices of $\sigma$ and $Q$ the set of vertices of $Room(\sigma)$ (see Definition \ref{def:roomofasimplex}). It is an standard exercise to see that the barycenter of $\sigma$ is a $\frac{1}{\sqrt{2}}$-barycenter of $P$ relative to $Q$.
    
    The notion of $\lambda$-barycenters is intended to generalise this feature of simplicial complexes to the situation where we have a map $\iota : S^{(0)} \rightarrow Z$ from the vertices of $S$ into a metric space $Z$. As we will see, existence of $\lambda$-barycenters allows us to extend such maps $\iota$ to the vertices of the first barycentric subdivision of $S$ in such a way that 'the diameters of simplices shrink'.
\end{exam}

\begin{exam}[Injective metric spaces]
    A metric space $(Z,d)$ is called injective, if for every isometric embedding $f : A \rightarrow B$ and any $1$-Lipschitz map $\iota : A \rightarrow Z$, there exists a $1$-Lipschitz extension $\iota' : B \rightarrow Z$ such that $\iota' \circ f = \iota$. Injective metric spaces have $\frac{1}{\sqrt{2}}$-barycenters up to any diameter, which can be seen as follows: Let $P, Q$ be two finite (disjoint) sets in $Z$. We construct a simplicial complex $S$, whose vertices are given by $P \coprod Q$ and for every $q \in Q$, the set $\{ q \} \cup P$ spans a simplex of the appropriate dimension. We put the length of every edge between vertices in $P$ to be equal to $\diam(P)$ and for all $p \in P$, $q \in Q$, we put the length of the edge from $p$ to $q$ to be equal to $\diam( \{ q \} \cup P)$. With these edge-lengths, every simplex in $S$ is isometric to a euclidean simplex. (The non-trivial part is the existence of a euclidean simplex with these edge-lengths. Our edge-lengths are chosen to ensure existence.) Note that the barycenter of the simplex $P$ remains a $\frac{1}{\sqrt{2}}$-barycenter of $P$ relative to $Q$.
    
    Let $\iota : S^{(0)} \hookrightarrow Z$ be the map sending the vertices of $S$ to their corresponding points in $Z$. Due to the way we chose the lengths of edges in $S$, this map is $1$-Lipschitz. Let $S'$ be the first barycentric subdivision of $S$ and $b$ the barycenter of the simplex in $S$ spanned by $P$. Denote by $f : S^{(0)} \hookrightarrow S'^{(0)}$ the inclusion of the vertices of $S$ into the vertices of $S'$. Injectivity of $Z$ implies that $\iota$ extends to a $1$-Lipschitz map $\iota' : S'^{(0)} \rightarrow Z$. Since barycenters in $S$ are $\frac{1}{\sqrt{2}}$-barycenters as noted above and discussed in the previous example, the fact that $\iota'$ is $1$-Lipschitz tells us that $\iota'(b)$ is a $\frac{1}{\sqrt{2}}$-barycenter of $P$ relative to $Q$. We conclude that injective metric spaces have $\frac{1}{\sqrt{2}}$-barycenters up to any diameter.
\end{exam}

\begin{exam}[$\CAT$ spaces]
Let $X$ be a $\CAT$ space and let $p_1, \dots, p_n$, $q_1, \dots, q_m \in X$. Without loss of generality, $d(p_1, p_2) = \diam( p_1, \dots, p_n)$. Let $\gamma$ be the geodesic from $p_1$ to $p_2$ and let $b$ be the midpoint of $\gamma$. Standard arguments with comparison triangles and Euclidean geometry, together with the fact that the functions $d(q_j, \gamma(t))$ are convex, show that $b$ is a $\frac{\sqrt{3}}{2}$-barycenter of $\{ p_1, \dots, p_n \}$ relative to $\{ q_1, \dots, q_m \}$. We conclude that $X$ has $\frac{\sqrt{3}}{2}$-barycenters up to any diameter. (Note that the same holds for any dense subspace of $X$.)
\end{exam}

\begin{exam}[Spheres] \label{exam:BarycentersonSpheres}
Let $Z$ be a circle of radius $r$ embedded in the euclidean plane, equipped with the metric it inherits from $\mathbb{R}^2$, and let $x, y, z$ be three equidistant points on $Z$. One easily checks that the only points $b \in Z$ that satisfy $\max(d(b,x), d(b,y), d(b,z)) \leq \diam(x,y,z)$ are the points $x,y,z$ themselves and there is no $\lambda < 1$ for which a $\lambda$-barycenter of $\{ x,y,z \}$ exists.

The diameter of any set of equidistant points on $Z$ is equal to $\sqrt{3}r$. The moment we limit ourselves to sets with diameter $\leq \Delta < \sqrt{3}r$, we can find $\lambda < 1$ such that every set $P$ with diameter $\leq \Delta$ has a $\lambda$-barycenters relative to the empty set. Note that the full condition for having $\lambda$-barycenters up to diameter $\Delta$ is still not satisfied if $\Delta$ is close to $\sqrt{3}r$, as for any two $p_1, p_2 \in Z$ whose distance is slightly smaller than $\sqrt{3}r$, we can choose $q$ to be the point antipodal to the midpoint of the arc from $p_1$ to $p_2$. One easily sees that $\{ p_1, p_2 \}$ cannot have a $\lambda$-barycenter relative to $q$ for any $\lambda < 1$. However, if $\Delta < \frac{\sqrt{3}}{2} r$, then $P \cup Q$ is contained in a sufficiently small arc of the circle so that one can take the midpoint of the shortest arc that contains all elements of $P$ as a $\frac{1}{2}$-barycenter relative to $Q$.

A similar example can be obtained by considering four equidistant points on a $2$-sphere, although we have to choose different $\lambda$ and $\Delta$. This example also generalises to higher-dimensional spheres.
\end{exam}

We observe from these examples that the existence of $\lambda$-barycenters is on one hand related to some form of extension properties and to non-positive curvature. On the other hand, the existence of round spheres creates a bound on the diameter up to which $\lambda$-barycenters exist. The smaller (i.e.\,the more positively curved) the sphere, the smaller the diameter has to be. In light of these observations, we raise the following questions.

\begin{quest*}
    Let $Z$ be a geodesic metric space that has $\lambda$-barycenters up to diameter $\Delta$. Does there exist some $\kappa > 0$, depending on $\lambda$ and $\Delta$, such that $Z$ is a $\mathrm{CAT}(\kappa)$ space?
\end{quest*}

\begin{quest*}
    Can we describe existence of $\lambda$-barycenters up to diameter $\Delta$ as a weaker form of the extension property of injective spaces? For example, can these spaces be characterised as the spaces that satisfy a variation of the Kirszbraun extension theorem (see \cite{AlexanderKapovitchPetrunin11, LangSchroeder97})?
\end{quest*}

As mentioned in section \ref{subsec:motivationfromthedifferentiablecase}, we are particularly interested in existence of $\lambda$-barycenters for codimension one submanifolds inside Cartan-Hadamard manifolds. In that context, we raise the following question.

\begin{quest*}
Let $X$ be a Cartan-Hadamard manifold with pinched negative curvature and $\Sigma$ a codimension one $C^2$-differentiable submanifold in $X$. Suppose the second fundamental form of $\Sigma$ is bounded by some constant $k$ (in the sense that the second fundamental form is bounded when applied to unit vectors). Are there $\lambda$ and $\Delta$, depending only on $k$ and the dimension of $X$, such that $\Sigma$ has strong $\lambda$-barycenters up to diameter $\Delta$?
\end{quest*}

The rest of the paper is structured as follows. In section \ref{sec:Preliminaries}, we introduce and recall some notation and basic facts about geometry of negatively curved spaces and simplicial complexes that we will need. In section \ref{sec:RetractingtotheBoundary}, we show how the retraction $C_{\epsilon} \rightarrow \Sigma_{\epsilon}$ is constructed under certain geometric assumptions and we show that the existence of $\lambda$-barycenters is sufficient for the construction of this retraction. In section \ref{sec:applications}, we apply our general construction to the specific case of cocompact, negatively curved Cartan-Hadamard manifolds and convex hulls of quasi-convex, codimension one subgroups and we prove Theorem \ref{thmintro:contractibilityinboundary}.\\

{\bf Acknowledgments:} The authors are grateful to Fanny Kassel, Alessandro Sisto, and Stefan Stadler for several helpful discussions and suggestions. The second author thanks Ivan Beschastnyi and Veronica Fantini for discussing a previous version of the retraction-construction. The first author was supported by NSF grant DMS-2052801. The second author has been funded by the SNSF grant 194996.




\section{Preliminaries} \label{sec:Preliminaries}




\subsection{Geometry in non-positive curvature} \label{subsec:GeometryNonPositiveCurvature}

This section is mostly concerned with $\CAT$ spaces. There will be some results that are specific to $\mathrm{CAT(-1)}$ spaces or Riemannian manifolds with some curvature restrictions that are necessary for the applications that we have in mind. We will highlight these extra assumptions when we arrive at these results.

Let $X$ be a proper $\CAT$ space. A {\it geodesic} in $X$ is an isometric embedding $\gamma : I \rightarrow X$ of an interval $I \subset \mathbb{R}$. (We will work exclusively with geodesics for which $I$ is closed.) If $I$ is bounded, we also call $\gamma$ a {\it geodesic segment}, if $I$ is unbounded in one direction, we call $\gamma$ a {\it geodesic ray}, and if $I = \mathbb{R}$, we call $\gamma$ a {\it bi-infinite geodesic}. We say $X$ is {\it (uniquely) geodesically complete} if every geodesic can be extended to a (unique) bi-infinite geodesic. We will frequently use the fact that, in $\CAT$ spaces, for any two geodesics $\gamma$, $\gamma'$ and any closed convex set $C$, the functions $t \mapsto d(\gamma(t), \gamma'(t))$ and $t \mapsto d(C, \gamma(t))$ are convex. Furthermore, if $C$ consists of a single point, the latter map is strictly convex. These convexity-properties imply that the geodesic between any pair of points in $X$ is unique.

A $\CAT$ space has a boundary at infinity defined as follows: Two geodesic rays $\gamma, \tilde{\gamma}$ are called {\it asymptotically equivalent} if $\lim_{\vert t \vert \rightarrow \infty} d(\gamma(t), \tilde{\gamma}(t) ) < \infty$. The {\it visual boundary} of $X$ is defined by
\[ \partial_{\infty} X := \faktor{ \{ \gamma \text{ geodesic ray in } X \} }{ \text{asymptotic equivalence} }. \]
If a geodesic ray $\gamma : [c, \infty)$ represents a point $\xi \in \partial_{\infty} X$, we also write $\gamma(\infty) = \xi$ (and we analogously define $\gamma(-\infty)$ if $\gamma$ is defined on $(-\infty, c]$.) A basic fact about proper $\CAT$ spaces is that for every $p \in X$, $\xi \in \partial_{\infty} X$, there exists a unique geodesic ray $\gamma$ with $\gamma(0) = p$ and $\gamma(\infty) = \xi$. We call this the geodesic from $p$ to $\xi$. Given two points $p, q \in X \cup \partial_{\infty} X$, we denote the unique geodesic from $p$ to $q$ by $\gamma_{pq}$.

The space $\overline{X} := X \cup \partial_{\infty} X$ can be equipped with a topology, called {\it cone topology} such that $\overline{X}$ is compact, $X \subset \overline{X}$ is open and dense, and its topology inherited as a subspace of $\overline{X}$ is the same as the initial metric topology. This topology is generated by the following open sets: We take all open sets in $X$ and for a fixed $o \in X$ and all $\epsilon > 0, R > 0, \xi \in \partial_{\infty} X$, we add
\[ U_{o, \epsilon, R}(\xi) := \{ p \in \overline{X} \vert d( \gamma_{op}(R), \gamma_{o\xi}(R)) < \epsilon \} \setminus B_R(o), \]
where $B_R(o)$ denotes the closed ball of radius $R$, centered at $o$. Going over all $\xi, \epsilon, R$, these sets, together with all open sets in $X$, form a basis for a topology, which is called the cone topology. It is a basic fact that the cone topology does not depend on the choice of the base point $o$. The restriction of this topology to $\partial_{\infty} X$ is also called cone topology, or sometimes visual topology.

For $o \in X$, $\xi, \eta \in \partial_{\infty} X$, we define the Gromov product as
\[ (\xi \vert \eta)_o := \lim_{t \rightarrow \infty} t - \frac{1}{2} d( \gamma_{o\xi}(t), \gamma_{o\eta}(t)). \]
If $X$ is a $\mathrm{CAT(-1)}$ space, the expression
\[ \rho_o(\xi, \eta) := e^{- (\xi \vert \eta)_o}, \]
defines a metric on $\partial_{\infty} X$, which depends on the base point $o$. We call $\rho_o$ a visual metric and the topology they induce on the visual boundary is the same as the one obtained from the cone topology. (Note that this fails for $\CAT$ spaces, as boundaries of $\CAT$ spaces may have distinct points whose Gromov product, with respect to any basepoint, is infinite.)\\

\begin{rem} \label{rem:continuityofangles}
    Let $o, p, q \in X$ be three mutually distinct points. The angle $\angle_o(p,q)$ is defined to be the angle at which the two geodesic segments $\gamma_{op}$ and $\gamma_{oq}$ meet at $o$. By {\cite[Chapter II.3, Proposition 3.3]{CAT(0)reference}}, the angle is continuous in $p$ and $q$ and upper semicontinuous in $o$. By {\cite[Lemma 3.3]{Shiohama93}}, the angle is lower semicontinuous in $o$, if $X$ satisfies a lower curvature bound in the sense of Alexandrov. In particular, if $X$ has both an upper and a lower curvature bound in the sense of Alexandrov, then the angle is continuous in all three arguments at every triple in which $o$ does not coincide with $p$ or $q$.
\end{rem}

Let $C \subset X$ be a closed, convex subset. Since $X$ is $\CAT$, there is a well-defined closest-point projection $\pi_C : X \rightarrow C$, which is continuous. Given a set $S \subset X$, we define $C(S)$ to be the closed convex hull of $S$, i.e.\,the smallest closed convex set in $X$ that contains $S$. Similarly, if $S \subset \overline{X}$, we define $C(S)$ to be the smallest closed convex set in $X$ that contains all (possibly bi-infinite) geodesics between points in $S$. Given a convex set $C \subset X$, we can consider all points in $\partial_{\infty} X$ that can be represented by geodesic rays contained in $C$ and refer to this as $\partial_{\infty} C$. 

Let $C \subset X$ be any subset and $R > 0$. We define the $R$-neighbourhood of $C$ by
\[ N_R(C) := \{ p \in X \vert d(p, C) \leq R \}. \]
If $C$ is a convex set and $\epsilon > 0$, we define $C_{\epsilon} := N_{\epsilon}(C)$, which is again a closed convex set since we are working in a $\CAT$ space. We are interested in the topological boundary
\[ \partial C_{\epsilon} = \{ x \in X \vert d(x, C) = \epsilon \} \subset X. \]
This set should not be confused with the visual boundary $\partial_{\infty} C_{\epsilon} = \partial_{\infty} C$ of $C_{\epsilon}$, which lies in $\partial_{\infty} X$.

\begin{rem}
    We work with $C_{\epsilon}$ rather than the initial set $C$ because of the following property: If a geodesic $\gamma$ starts in $C$ and leaves $C_{\epsilon}$, then it intersects $\partial C_{\epsilon}$ in exactly one point. Indeed, since $\gamma$ starts inside of $C$, the convex function $t \mapsto d(\gamma(t), C)$ has to be strictly increasing when it meets $\partial C_{\epsilon}$ for the first time. By convexity, this function will remain strictly increasing for all time after the first time it meets $\partial C_{\epsilon}$ and can never obtain the value $\epsilon$ again. We conclude that $\gamma$ meets $C_{\epsilon}$ in exactly one point.
\end{rem}

Let $\Sigma_{\epsilon}$ be a connected component of $\partial C_{\epsilon}$. There exists a unique connected component $Y_{\epsilon}$ of $X \setminus C_{\epsilon}$ such that $\partial Y_{\epsilon} \cap \Sigma_{\epsilon} \neq \emptyset$. (We will never use this, but it is easy to see that, if this intersection is non-empty, then $\partial Y_{\epsilon} = \Sigma_{\epsilon}$.) We call $Y_{\epsilon}$ the {\it connected component of $X \setminus C_{\epsilon}$ bounded by $\Sigma_{\epsilon}$}. Suppose $\epsilon, \epsilon' > 0$ and $\Sigma_{\epsilon}$, $\Sigma_{\epsilon'}$ be connected components of $\partial C_{\epsilon}$ and $C_{\epsilon'}$ respectively. We say that $\Sigma_{\epsilon}$ and $\Sigma_{\epsilon'}$ correspond to the same component of the complement, if the components $Y_{\epsilon}$, $Y_{\epsilon'}$, bounded by $\Sigma_{\epsilon}$ and $\Sigma_{\epsilon'}$ respectively, intersect.




\subsection{Cartan-Hadamard manifolds} \label{subsec:CartanHadamardmanifolds}

Let $X$ be a Cartan-Hadamard manifold, that is a geodesically complete, simply connected Riemannian manifold with non-positive sectional curvature. Furthermore, suppose that there exists some $b$ such that the sectional curvature in $X$ is always greater than $-b^2$. In this subsection, we cover the special properties that Cartan-Hadamard manifolds enjoy over general $\CAT$ spaces, including some results that specifically hold for Cartan-Hadamard manifolds with a negative upper curvature bound.\\

The following is a crucial question for our results in section \ref{sec:applications}: Given a closed set $S \subset \partial_{\infty} X$, is $\partial_{\infty} C(S) = S$? A remarkable result by Anderson provides a sufficient condition for a positive answer.

\begin{thm}[{\cite[Theorem 3.3]{Anderson83}}] \label{thm:Andersonresult}
    Let $X$ be a complete, simply connected Riemannian manifold with sectional curvature in $[-b^2, -1]$ for some $b^2 \geq 1$. Then for any closed subset $S \subset \partial_{\infty} X$, we have $\partial_{\infty} C(S) = S$.
\end{thm}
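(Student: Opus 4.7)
\medskip

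\noindent\textbf{Proof plan.} The plan is to split the statement into the easy inclusion $S\subseteq \partial_\infty C(S)$ and the substantive reverse inclusion $\partial_\infty C(S)\subseteq S$. For the forward direction, assume $|S|\geq 2$ (the claim is vacuous or a matter of convention when $|S|\leq 1$): for any $\xi\in S$ and any distinct $\eta\in S$, the bi-infinite geodesic $\gamma_{\xi\eta}$ is contained in $C(S)$ by definition, and its forward ray represents $\xi$ at infinity, so $\xi\in\partial_\infty C(S)$.

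For the reverse inclusion, the strategy is to reduce to a quasi-convex-hull statement in a hyperbolic space. Set $\mc{G}(S):=\bigcup_{\xi,\eta\in S}\gamma_{\xi\eta}$, so that $\partial_\infty \mc{G}(S)=S$ is immediate from the closedness of $S$ together with the fact that the endpoints of all geodesics in $\mc{G}(S)$ lie in $S$. I would then show that $C(S)$ lies within a uniformly bounded Hausdorff distance of $\mc{G}(S)$; this immediately gives $\partial_\infty C(S)=\partial_\infty \mc{G}(S)=S$ and completes the proof. To establish this bounded-neighbourhood claim, use that the hypothesis $\sec\in[-b^2,-1]$ makes $X$ a $\CATminus$ space, hence Gromov-hyperbolic with a universal thinness constant $\delta$ that applies even to ideal triangles with vertices on $\partial_\infty X$. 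The key technical step is uniform quasi-convexity of $\mc{G}(S)$: given $x,y\in\mc{G}(S)$ on bi-infinite geodesics $\alpha, \beta\subset\mc{G}(S)$ with endpoints $\xi_1,\xi_2$ and $\eta_1,\eta_2$ in $S$, apply ideal-triangle thinness first to the triangle $(x,\xi_1,\eta_1)$, whose sides $\gamma_{x\xi_1}\subset\alpha$ and $\gamma_{\xi_1\eta_1}$ both lie in $\mc{G}(S)$, to conclude that $\gamma_{x\eta_1}$ lies in a $\delta$-neighbourhood of $\mc{G}(S)$; then apply it again to the triangle $(x,y,\eta_1)$ to conclude that $\gamma_{xy}$ lies in a $2\delta$-neighbourhood of $\mc{G}(S)$. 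A standard iteration in $\delta$-hyperbolic geometry (successively closing under geodesic segments between points already within bounded distance of $\mc{G}(S)$, then passing to a closed limit) upgrades uniform $2\delta$-quasi-convexity of $\mc{G}(S)$ to an inclusion $C(S)\subset N_{D(\delta)}(\mc{G}(S))$ for a uniform constant $D(\delta)$.

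The main obstacle is making the quasi-convexity and hull constants truly uniform across all closed $S\subset\partial_\infty X$, independently of the cardinality or the geometry of the endpoints at infinity. This uniformity is precisely what forces the strict negative upper curvature bound: dropping the bound to $\CAT$ destroys the conclusion, as in $\R^n$ the convex hull of three points at infinity fills an entire two-dimensional sector whose visual boundary is a full arc properly containing those three points. The lower curvature bound $-b^2$ plays an auxiliary role, ensuring that the ambient Riemannian structure and the continuity of angles (cf.\,Remark \ref{rem:continuityofangles}) interact smoothly with the $\CATminus$ comparison arguments needed to promote triangle thinness in the model space $\mathbb{H}^2$ to the ideal-triangle thinness invoked above inside $X$.
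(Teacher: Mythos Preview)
The paper does not prove this theorem; it is quoted from Anderson and followed only by a remark, so there is no in-paper argument to compare against. Your outline---showing that $\mc G(S)$ is quasi-convex and then deducing $C(S)\subset N_D(\mc G(S))$---is the standard route, and the paper itself invokes exactly that containment later (as Bowditch's lemma \cite[Lemma 2.6]{Bowditch94}) in the proof of Lemma~\ref{lem:SmallDiameteratInfinity}.

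There is, however, a genuine gap. The step you call ``a standard iteration in $\delta$-hyperbolic geometry'' does not yield $C(S)\subset N_{D(\delta)}(\mc G(S))$ with $D$ depending only on $\delta$. Writing $G_0=\mc G(S)$ and $G_{n+1}=\bigcup_{x,y\in G_n}\gamma_{xy}$, thin triangles give each $G_n$ the \emph{same} quasi-convexity constant $2\delta$, hence only $G_{n+1}\subset N_{2\delta}(G_n)$, so $G_n\subset N_{2n\delta}(\mc G(S))$; nothing forces the bound to stabilise. This is not a repairable technicality: the Remark immediately after Theorem~\ref{thm:Andersonresult} records that the conclusion $\partial_\infty C(S)=S$ \emph{fails} for general $\CATminus$ spaces and that the lower curvature bound cannot be dropped. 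Since ideal triangles are already uniformly thin in any $\CATminus$ space, your argument---if it worked---would prove the theorem in that generality, a contradiction. Consequently your description of the lower bound $-b^2$ as playing only an ``auxiliary role'' for angle continuity and ideal-triangle thinness is incorrect: the lower bound is exactly what makes the passage from quasi-convexity of $\mc G(S)$ to $C(S)\subset N_D(\mc G(S))$ go through. In Anderson's and Bowditch's arguments this enters via Riemannian comparison estimates using the pinching (note that in the paper's own citation of Bowditch the constant $r$ ``depends only on the lower curvature bound''), not via coarse hyperbolicity.
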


\begin{rem} 
    We remark that the lower curvature bound in this result can be weakened, but not completely dropped. Furthermore, this result does not hold for general $\mathrm{CAT(-1)}$ spaces. However, there are some positive results if one replaces convexity by a suitable notion of almost convexity. For more information, see \cite{HummelLangSchroeder99}.
\end{rem}

Let $\epsilon > 0$ and $C \subset X$ some convex set. By \cite{Walter76} the topological boundary of $C_{\epsilon}$ is a $C^{1,1}$-manifold. Let $\Sigma_{\epsilon}$ be a connected component of $C_{\epsilon}$ and let $Y_{\epsilon}$ be the connected component of $X \setminus C_{\epsilon}$ that is bounded by $\Sigma_{\epsilon}$. Let $N$ be the gradient field of the function $d( \cdot, C)$ on $Y_{\epsilon}$, which is the unique unit vector field that is normal to $\Sigma_{\epsilon}$ for all $\epsilon > 0$ and points away from $C$. We denote the flow along $N$ by $\Phi_N$ and define
\[ \Phi_N^{\infty} : X \setminus C \rightarrow \partial_{\infty} X \setminus \partial_{\infty} C \]
\[ p \mapsto \lim_{t \rightarrow \infty} \Phi_N^t(p). \]
For every $\epsilon > 0$, this map defines a homeomorphism $\partial C_{\epsilon} \approx \partial_{\infty} X \setminus \partial_{\infty} C$. In particular, every connected component $\Sigma_{\epsilon}$ of $\partial C_{\epsilon}$ can be canonically identified with a connected component $Z$ of $\partial_{\infty} X \setminus \Lambda(H)$, which we call the connected component corresponding to $\Sigma_{\epsilon}$.

\begin{rem}
    We point out that, if $Y_{\epsilon}$ is the connected component of $X \setminus \Sigma_{\epsilon}$ bounded by $\Sigma_{\epsilon}$, then the topological boundary of $Y_{\epsilon}$ in $\overline{X}$ with the cone topology is the union $\Sigma_{\epsilon} \cup \partial_{\infty} C \cup Z$, where $Z$ is the connected component of $\partial_{\infty} X \setminus \partial_{\infty} C$ corresponding to $\Sigma_{\epsilon}$.
\end{rem}

A subset $C \subset X$ is called {\it quasi-convex} if there exists some $R \geq 0$ such that for any two points $p, q \in C$, the geodesic $\gamma_{pq}$ is contained in $N_R(C)$. Let $X$ be a Gromov-hyperbolic Cartan-Hadamard manifold and $G$ a group acting properly and cocompactly by isometries on $X$. Let $H < G$ be a subgroup such that one (and hence all) orbits of $H$ in $X$ are quasi-convex. By \cite[Chapter III.$\Gamma$, Proposition 3.7]{CAT(0)reference}, this implies that $H$ is a hyperbolic group. We define the {\it limit set} of $H$ by
\[ \Lambda(H) = \overline{Hx_0} \cap \partial_{\infty} X, \]
where $\overline{Hx_0}$ denotes the topological closure of the orbit of $x_0$ under the action of $H$ in $\overline{X}$ equipped with the cone topology. It is well-known that the limit set does not depend on the choice of $x_0$. In our main applications, the convex set $C$ will be the convex hull of $\Lambda(H)$.

\begin{rem} \label{rem:cocompactactiononconvexhull}
    Let $C := C(\Lambda(H))$. The action of $H$ on $X$ preserves $C$ (otherwise, $hC \cap C$ would be a strictly smaller closed convex subset that contains all geodesics between points in $\Lambda(H)$). Furthermore, $H$ acts cocompactly on $C$. Indeed, if it did not, one could construct a sequence in $C$ that moves farther and farther away from an orbit $Hx_0$. This sequence admits a subsequence that converges to a point that lies in $\partial_{\infty} C = \Lambda(H)$ and in $\partial_{\infty} X \setminus \Lambda(H)$, a contradiction. (Note that we used Theorem \ref{thm:Andersonresult} in this argument and thus require its assumptions on the curvature of $X$.)
\end{rem}




\subsection{Simplicial complexes} \label{subsec:SimplicialComplexes}

In this paper, we will only be concerned with simplicial complexes, in which any set of vertices can span at most one simplex and no vertex may appear twice as a corner of the same simplex. Given a $k$-simplex with vertices $U_1, \dots, U_{k+1}$, we also write $\sigma( U_1, \dots, U_{k+1})$ for this $k$-simplex. We also say that $U_1, \dots, U_{k+1}$ span this simplex. Given the standard $k$-simplex in $\mathbb{R}^{k+1}$, it consists of all convex combinations of the form $\sum_{i = 1}^{k+1} \lambda_i e_i$, where the $e_i$ are the standard basis vectors. Analogously, we can write every point in the simplex $\sigma(U_1, \dots, U_{k+1})$ as a convex combination of the vertices $U_i$, which we write as $\sum_{i=1}^{k+1} \lambda_i U_i$.

All simplicial complexes that we will encounter are obtained in the following way.

\begin{mydef} \label{def:nerve}
    Let $X$ be a topological space and $\mathcal{U}$ a set of open sets in $X$. The {\it nerve of $\mathcal{U}$}, denoted $N(\mathcal{U})$, is the simplicial complex given by the following data:
    \begin{enumerate}
         \item The set of vertices of $N(\mathcal{U})$ is the set $\mathcal{U}$.

         \item For every collection $U_1, \dots, U_{k+1} \in \mathcal{U}$ such that $\bigcap_{i=1}^{k+1} U_i \neq \emptyset$, there is a $k$-simplex spanned by the vertices $U_1, \dots, U_{k+1}$.
    \end{enumerate}
\end{mydef}

If $\mathcal{U}$ has finite multiplicity and all elements of $\mathcal{U}$ have compact closure, then $N(\mathcal{U})$ is locally finite and the multiplicity is equal to the dimension of the largest simplex in the nerve.\\

Consider a simplicial complex $S$ with vertices $\{ U_i \vert i \in I \}$. We say a subset of indices $J \subset I$ {\it spans a simplex in $S$} if the vertices $\{ U_j \vert j \in J \}$ span a simplex in $S$ and we denote this simplex by $\sigma_J$. We will have to work with subdivisions of simplicial complexes, in particular the barycentric subdivision, which we define as follows.

\begin{mydef} \label{def:barycentricsubdivision}
    Let $S$ be a simplicial complex with vertices $\{ U_i \vert i \in I \}$. Its barycentric subdivision is the simplicial complex given by the following data:
    \begin{enumerate}
        \item The vertices are given by
        \[ \{ U_J \vert J \subset I : J \text{ spans a simplex in } S \}. \]

        \item A family of vertices $U_{J_1}, \dots, U_{j_{k+1}}$ spans a $k$-simplex if and only if $J_1 \subsetneq \dots \subsetneq J_{k+1}$, such that for all $i$, $J_i$ spans a simplex in $S$.
    \end{enumerate}
    
\end{mydef}

Note in particular that two vertices $U_J, U_{J'}$ in the barycentric subdivision are connected by an edge if and only if $J \subset J'$, or $J' \subset J$. We denote the $n$-th barycentric subdivision of $S$ by $\prescript{(n)}{}{S}$. There is a canonical homeomorphism $i_n : S \rightarrow \prescript{(n)}{}{S}$.

\begin{mydef} \label{def:roomofasimplex}
    We define the {\it room around $\sigma$}, denoted $Room(\sigma)$ to be the closure of the union of all simplices in $S$ that contain $\sigma$. We write $Room(\sigma)^{(0)}$ for the set of vertices in the room around $\sigma$.
\end{mydef}




\section{Retracting the convex hull to its boundary} \label{sec:RetractingtotheBoundary}

Let $X$ be a $\CAT$ space, $C \subset X$ a closed, convex subset, and $H$ a group acting properly and freely by isometries on $X$ such that $H$ preserves $C$ and the action of $H$ on $C$ is cocompact. For all $\epsilon > 0$, let $C_{\epsilon}$ denote the $\epsilon$-neighbourhood of $C$. Consider the topological boundary $\partial C_{\epsilon} \subset X$ of $C_{\epsilon}$. Let $\Sigma_{\epsilon}$ be a connected component of $\partial C_{\epsilon}$ and let $Y_{\epsilon} \subset X \setminus C_{\epsilon}$ be the connected component of $X \setminus C_{\epsilon}$ such that $\Sigma_{\epsilon} \subset \overline{Y_{\epsilon}}$. Since $H$ acts by isometries and preserves $C$, it also preserves $C_{\epsilon}$.

\begin{rem}
    We will assume that $H$ preserves $\Sigma_{\epsilon}$. If $X \setminus C$ has finitely many connected components, preservation of $\Sigma_{\epsilon}$ can be achieved in the following way: There is a finite index subgroup $H_0 < H$ that preserves each connected component of $\partial C_{\epsilon}$. Since $H_0$ has finite index in $H$, it has the same limit set. Since $H_0$ acts as a convergence group in its limit set, it is a quasi-convex subgroup of $H$, due to an argument of Kapovich-Kleiner \cite[Theorem 8]{KapovichKleiner00}. Since $H$ and $H_0$ have the same limit set, they also have the same convex hull $C = C(\Lambda(H)) = C(\Lambda(H_0))$ and its boundary components. Furthermore, $H_0$ acts cocompactly on $C$ as it has finite index in $H$, which acts cocompactly on $C$. We can thus replace $H$ by $H_0$ and study the same convex set, having gained the extra assumption that the action preserves each connected component of $\partial C_{\epsilon}$.
\end{rem}




\subsection{Constructing the retraction} \label{subsec:ConstructingtheRetraction}

We start with a result that will allow us to formalize a property that we need to be able to track.

\begin{mydef}
    Let $q \in \Sigma_{\epsilon}$ and $q' \in Y_{\epsilon}$. We define
    \[ \angle_q(q', C) := \angle( q', \pi_C(q) ), \]
    where $\pi_C : X \rightarrow C$ denotes the closest-point projection.    
\end{mydef}

Since $q \in \Sigma_{\epsilon}$, which does not intersect $C$, it cannot happen that $\pi_C(q) = q$ and thus the angle above is always well-defined. We need the following key-property of this angle.

\begin{lem} \label{lem:largeangleimpliesgrowingdistance}
    Let $q \in \Sigma_{\epsilon}$, $q' \in Y_{\epsilon}$, and $\gamma$ the geodesic from $q$ to $q'$. If $\angle_q(q', C) > \frac{\pi}{2}$, then $\gamma$ intersects $C_{\epsilon}$ only in $q$.
\end{lem}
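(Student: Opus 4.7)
Parametrise $\gamma$ at unit speed on $[0,T]$ with $\gamma(0)=q$, $\gamma(T)=q'$, and set $p := \pi_C(q)$, so $d(q,p)=\epsilon$ and the hypothesis reads $\alpha := \angle_q(q',p) > \pi/2$. For each $t \in (0,T]$ put $p_t := \pi_C(\gamma(t)) \in C$, so that $d(\gamma(t),C) = d(\gamma(t),p_t)$. I will apply the $\CAT$ law of cosines to the sliding triangle $q,\gamma(t),p_t$ to obtain $d(\gamma(t),C) > \epsilon$ for all sufficiently small $t>0$, and then propagate this conclusion to every $t \in (0,T]$ using convexity of $C_{\epsilon}$.

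\textbf{Small $t$.} The projection $\pi_C$ is $1$-Lipschitz on any $\CAT$ space, hence $p_t \to p$ as $t \to 0^+$. Because $\gamma(t)$ lies on the geodesic from $q$ to $q'$, we have $\angle_q(\gamma(t),p_t) = \angle_q(q',p_t)$. Since $p \neq q$ (as $q \notin C$), Remark \ref{rem:continuityofangles} gives continuity of the angle in its non-basepoint argument, so $\beta_t := \angle_q(q',p_t) \to \alpha > \pi/2$, and we may fix $\delta \in (0,T]$ with $\beta_t > \pi/2$ for all $t \in (0,\delta)$. The $\CAT$ condition on the geodesic triangle with vertices $q,\gamma(t),p_t$ (its Euclidean comparison angle at $q$ is $\geq \beta_t$, together with the fact that $\cos$ is decreasing on $[0,\pi]$) yields the reverse law of cosines
\[ d(\gamma(t),p_t)^2 \;\geq\; d(q,\gamma(t))^2 + d(q,p_t)^2 - 2\, d(q,\gamma(t))\, d(q,p_t) \cos \beta_t. \]
Substituting $d(q,\gamma(t)) = t$, $d(q,p_t) \geq d(q,C) = \epsilon$, and using $\cos\beta_t < 0$, the right-hand side strictly exceeds $\epsilon^2$. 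Hence $d(\gamma(t),C) > \epsilon$, i.e.\ $\gamma(t) \notin C_{\epsilon}$, for every $t \in (0,\delta)$.

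\textbf{Extension and main obstacle.} The set $C_{\epsilon}$ is convex, being the sublevel set $\{d(\cdot,C) \leq \epsilon\}$ of a convex function on the $\CAT$ space $X$. Therefore, if some $t_1 \in (0,T]$ satisfied $\gamma(t_1) \in C_{\epsilon}$, convexity would force the subgeodesic $\gamma|_{[0,t_1]}$, joining the two points $q,\gamma(t_1) \in C_{\epsilon}$, to lie entirely in $C_{\epsilon}$, contradicting the conclusion of the small-$t$ step. Thus $\gamma(t) \notin C_{\epsilon}$ for every $t \in (0,T]$, so $\gamma$ meets $C_{\epsilon}$ only at $q$. The one genuinely delicate choice is to work with the \emph{moving} projection $p_t$ rather than the fixed $p$: the naive bound on $d(\gamma(t),p)$ yields only an upper bound for $d(\gamma(t),C)$, whereas inserting $p_t$ into the $\CAT$ comparison converts the obtuse-angle hypothesis directly into the lower bound that the argument requires.
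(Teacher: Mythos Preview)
Your proof is correct and shares the paper's overall structure: establish $\gamma(t)\notin C_{\epsilon}$ for small $t>0$, then invoke convexity of $C_{\epsilon}$ to rule out any later return. The difference lies in the small-$t$ step. The paper fixes an arbitrary $p\in C$ and splits into two cases, $p$ near $\pi_C(q)$ (handled by angle continuity and \cite[Corollary II.3.6]{CAT(0)reference}) versus $p$ bounded away from $\pi_C(q)$ (handled by a crude triangle-inequality buffer), thereby showing $d(\gamma(t),p)>\epsilon$ for every $p$. You instead track only the \emph{moving} foot point $p_t=\pi_C(\gamma(t))$, feed the sliding triangle $q,\gamma(t),p_t$ into the $\CAT$ law of cosines, and read off $d(\gamma(t),C)=d(\gamma(t),p_t)>\epsilon$ in one stroke. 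Your route is a little slicker, since the $1$-Lipschitz continuity of $\pi_C$ delivers $p_t\to p$ and hence $\beta_t>\pi/2$ without any near/far dichotomy; the paper's version, on the other hand, is marginally more self-contained in that it never needs the explicit comparison inequality, only the qualitative monotonicity statement from \cite{CAT(0)reference}.
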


\begin{proof}
    We parametrise $\gamma$ with unit speed on the interval $\gamma : [0, d(q,q')] \rightarrow X$. We need to show that for all $t > 0$ and for all $p \in C$, $d(\gamma(t), p) > \epsilon$. Since $C_{\epsilon}$ is convex, it is sufficient to find for every $p \in C$ some $\delta > 0$ such that that $d(\gamma(t), p) > \epsilon$ for all $t \in (0, \delta)$. To show this, we have to distinguish between $p$ close to $\pi_C(q)$ and $p$ far from $\pi_C(q)$.

    We know that $\angle_q(q', \pi_C(q)) > \frac{\pi}{2}$. Since $\angle_q(x, y)$ is continuous in $x$ and $y$, there exists some $\epsilon' > 0$ such that for all $p \in B_{\epsilon'}( \pi_C(q) )$, we have $\angle_q(q',p) > \frac{\pi}{2}$. By {\cite[Corollary II.3.6]{CAT(0)reference}}, this implies that $d( \gamma(t), p)$ is strictly increasing at $t = 0$. We conclude that
    \[ d(\gamma(t), p) > d(\gamma(0), p) \geq d(\gamma(0), \pi_C(q) ) = \epsilon \]
    on a small open interval $(0, \delta)$ and thus, $d(\gamma(t), p) > \epsilon$ for all $t > 0$ and all $p \in B_{\epsilon'}( \pi_C(q) )$.

    Now consider the points in $\overline{ C \setminus B_{\epsilon'}( \pi_C(q) ) }$. There exists some constant $\epsilon'' > \epsilon$ such that for all $p \in \overline{ C \setminus B_{\epsilon'}( \pi_C(q) ) }$, $d(p, q) > \epsilon''$. (This is because distances are continuous, closest points in $C$ are unique, $\overline{ C \setminus B_{\epsilon'}( \pi_C(q) ) }$ is closed, and $X$ is proper.) Let $p \in \overline{ C \setminus B_{\epsilon'}( \pi_C(q) ) }$. Since $\gamma$ is a geodesic, we have for all $t \in (0, \epsilon'' - \epsilon)$ that $d( \gamma(t), p) > \epsilon$.

    We conclude that for all $p \in C$ and all $t > 0$, $d( \gamma(t), p) > \epsilon$. Thus, $\gamma(t) \in C_{\epsilon}$ if and only if $t = 0$.
\end{proof}

Our strategy to prove contractibility of $\Sigma_{\epsilon}$ relies on working with finite covers of a fundamental domain of the action of $H$ on $C_{\epsilon}$ and extending our construction on such finite covers in an $H$-equivariant way to all of $C_{\epsilon}$. We thus have to make sure that these covers interact well with their immediate vicinity, specifically with translates of the cover by $H$ that are close to the original cover. We make a couple of definitions that will allow us to formulate this properly.

\begin{mydef} \label{def:adjacency}
    Let $K \subset X$ be a compact subset and let $\mathcal{U}$ be a finite collection of bounded open sets in $X$ that covers $K$. We define the  {\it adjacency of $\mathcal{U}$} by
\[ \adj(\mathcal{U}) := \{ h U \vert U \in \mathcal{U}, h \in H, \exists U' \in \mathcal{U} : hU \cap U' \neq \emptyset \}. \]
\end{mydef}

We emphasise that the lack of restrictions on $h$ and $U'$ implies that the adjacency of $\mathcal{U}$ contains $\mathcal{U}$ itself. Since $H$ acts properly on $X$, the finiteness of $\mathcal{U}$ extends to $\adj(\mathcal{U})$. Note that, if $U \in \adj(\mathcal{U})$ there may exist some elements $h \in H \setminus \{ 1 \}$ such that $hU \in \adj(\mathcal{U})$ as well. We will sometimes call this the `partial action' of $H$ on $\adj(\mathcal{U})$ and say that $h$ `acts' on $U$ if $hU \in \adj(\mathcal{U})$. This `partial action' extends to the nerve of $\adj(\mathcal{U})$ where we will use similar notation, whenever it makes sense.

Given a subset $K \subset X$, we will denote its orbit under $H$ by
\[ HK = \{ hx \vert h \in H, x \in K \}. \]
Furthermore, given a collection $\mathcal{U}$ of subsets of $X$, we write
\[ H \mathcal{U} := \{ hU \vert h \in H, U \in \mathcal{U} \} \quad \text{and} \quad \bigcup \mathcal{U} := \cup_{U \in \mathcal{U}} U. \]

\begin{rem} \label{rem:equivariantextensionsfromtheadjacency}
    Below, we will consider maps $\Psi : \bigcup \mathcal{U} \rightarrow N( \adj( \mathcal{U} ) )$, $\iota : N(\adj( \mathcal{U} ) ) \rightarrow Y_{\epsilon}$. Since there is a partial action of $H$ on $\bigcup \mathcal{U}$ and $N( \adj( \mathcal{U} ) )$ and $H$ acts on $Y_{\epsilon}$ (as we assume that $H$ preserves $\Sigma_{\epsilon}$), we can talk about these maps being `$H$-equivariant' in the sense that, whenever $hq \in \bigcup \mathcal{U}$, then $h \Psi(q)$ lies in $N(\adj(\mathcal{U}))$ and $\Psi(hq) = h\Psi(q)$. Similarly, if $x \in N(\adj(\mathcal{U}))$ such that $hx \in \adj(\mathcal{U})$, $H$-equivariance means simply that $\iota(hx) = h\iota(x)$. We say that $\Psi$ and $\iota$ are {\it $H$-equivariant on the adjacency} whenever the above conditions hold.

    One immediately sees that maps that are $H$-equivariant on the adjacency extend uniquely to $H$-equivariant maps $\bigcup H\mathcal{U} \rightarrow N( H \mathcal{U} )$ and $N(H \mathcal{U}) \rightarrow Y_{\epsilon}$. Thus, for all the maps we construct in this and subsequent sections, it is sufficient to construct them on the adjacency and show that they are $H$-equivariant on the adjacency. We can then uniquely extend these maps to the maps we really need.
\end{rem}

It is a standard result that there is a continuous map $\bigcup \mathcal{U} \rightarrow N(\mathcal{U})$ that sends all points in $U \in \mathcal{U}$ to simplices incident to $\mathcal{U} \in N(\mathcal{U})^{(0)}$. We need an $H$-equivariant version of this result, which we prove for the sake of completeness

\begin{lem} \label{lem:ProjectiontoNerve}
Let $X$ be a paracompact, Hausdorff topological space, $H$ a discrete group which acts properly by homeomorphisms on $X$, $K \subset X$ a closed subset and $\mathcal{U}$ a collection of open sets with compact closure in $X$ that covers $K$ and has finite multiplicity. Then there exists an open neighbourhood $\tilde{U} \subset \bigcup H \mathcal{U}$ of $HK$ and a continuous, $H$-equivariant map $\Psi : \bigcup H \mathcal{U} \rightarrow N( H \mathcal{U} )$ with the following two properties:
\begin{enumerate}
    \item For all $q \in U$ with $U \in H\mathcal{U}$, we have that $\Psi(p)$ lies in a simplex of $N( H\mathcal{U} )$ that contains $U$ as a vertex.

    \item $\Psi( \bigcup \mathcal{U} \cap \tilde{U} ) \subset N( \adj( \mathcal{U} ) )$.
\end{enumerate}
\end{lem}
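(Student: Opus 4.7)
The plan is to build $\Psi$ from an $H$-equivariant partition of unity subordinate to $H\mathcal{U}$, and then apply the standard barycentric-coordinate map from an open cover to its nerve. First, I would use paracompactness of $X$ together with local finiteness of $\mathcal{U}$ (which follows from finite multiplicity and compactness of the closures $\overline{U}$) to produce, for each $U \in \mathcal{U}$, a continuous function $\psi_U : X \to [0,1]$ with $\{x : \psi_U(x) > 0\} = U$. In the intended metric setting a concrete choice is $\psi_U(x) = \min(1, d(x, X \setminus U))$.

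Next, I would spread these functions equivariantly across $H\mathcal{U}$. Properness of the action on each compact $\overline{U}$ ensures that at any fixed $x \in X$ only finitely many pairs $(h, U) \in H \times \mathcal{U}$ satisfy $x \in hU$, so the family $H\mathcal{U}$ is locally finite. Assuming $\mathcal{U}$ is fine enough that each $U$ has trivial setwise $H$-stabiliser (arranged, if necessary, by passing to a finer cover using freeness of the action), every $V \in H\mathcal{U}$ admits a unique factorisation $V = hU$, and I set $\tilde\psi_V(x) := \psi_U(h^{-1}x)$. The sum $S(x) := \sum_{V \in H\mathcal{U}} \tilde\psi_V(x)$ is locally finite, continuous, and strictly positive on $\bigcup H\mathcal{U}$: any $x \in \bigcup H\mathcal{U}$ lies in some $V = hU$ with $h^{-1}x \in U$, giving $\tilde\psi_V(x) > 0$. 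Normalising $\rho_V := \tilde\psi_V / S$ then produces an $H$-equivariant partition of unity subordinate to $H\mathcal{U}$.

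I would then define $\Psi : \bigcup H\mathcal{U} \to N(H\mathcal{U})$ by declaring $\Psi(x)$ to have barycentric coordinate $\rho_V(x)$ at the vertex $V$. Continuity and $H$-equivariance are inherited from the $\rho_V$. Property (1) holds because $q \in V$ forces $\rho_V(q) > 0$, so $V$ is a vertex of the minimal open simplex containing $\Psi(q)$. For property (2), take $\tilde U := \bigcup H\mathcal{U}$, which is open and contains $HK$; if $q \in \bigcup \mathcal{U}$, then $q \in U'$ for some $U' \in \mathcal{U}$, and every $V$ with $\rho_V(q) > 0$ contains $q$, so $V \cap U' \ni q$ is non-empty, whence $V \in \adj(\mathcal{U})$. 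Consequently the minimal simplex containing $\Psi(q)$ has all its vertices in $\adj(\mathcal{U})$, so $\Psi(q) \in N(\adj(\mathcal{U}))$.

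The main obstacle I anticipate is making the equivariant definition of $\tilde\psi_V$ well-posed when a nontrivial $h \in H$ preserves some $U \in \mathcal{U}$ setwise, since then the formula $\psi_U(h^{-1}x)$ would depend on the choice of factorisation $V = hU$. In the context the paper ultimately applies this lemma to, this is sidestepped because one may work with a cover fine enough that a nontrivial setwise stabiliser would produce a fixed point, contradicting freeness of the $H$-action. In full generality, one can either impose trivial setwise stabilisers as a hypothesis, or symmetrise $\tilde\psi_V$ by averaging $\psi_U(h^{-1}x)$ over the finite setwise stabiliser of each $U$.
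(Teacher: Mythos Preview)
Your proposal follows essentially the same route as the paper: spread bump functions for $\mathcal{U}$ equivariantly over $H\mathcal{U}$, normalise, and use the result as barycentric coordinates into the nerve. The one technical difference is that the paper takes an ordinary partition of unity subordinate to $\mathcal{U} \cup \{X \setminus K\}$, so the supports may lie strictly inside the $U$'s, and then defines $\tilde U$ as the $H$-orbit of the open set on which at least one bump is positive; you instead demand $\{\psi_U > 0\} = U$ exactly, which lets you take $\tilde U = \bigcup H\mathcal{U}$ but requires each $U$ to be a cozero set---automatic in the metric setting your distance-function formula handles, though not guaranteed by paracompact Hausdorff alone. Your treatment of the setwise-stabiliser issue is also more explicit than the paper's, which simply writes $x_{hi}(q) := x_i(h^{-1}q)$ without comment and relies on the cover being $H$-fine in all later applications.
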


\begin{proof}
We first observe that $H\mathcal{U}$ has finite multiplicity, because all elements of $\mathcal{U}$ have compact closure, $\mathcal{U}$ has finite multiplicity, and $H$ acts properly by homeomorphisms. We index $H \mathcal{U}$ with an index set $I$ and we find a subset $J \subset I$ such that $\mathcal{U} = \{ U_i \vert i \in J \}$. The action of $H$ on $H\mathcal{U}$ induces an action of $H$ on $I$ and $I = HJ$.

Since $K \subset X$ is closed and $\mathcal{U}$ is locally finite, the collection $\mathcal{U} \cup \{ X \setminus K \}$ provides a locally finite cover of $X$ by open sets. Since $X$ is paracompact and Hausdorff, there exists a continuous partition of unity subordinate to $\mathcal{U} \cup \{ X \setminus K \}$, i.e.\,continuous maps $x_i : X \rightarrow [0, 1]$ for $i \in J$ and a map $x_{X \setminus K} : X \rightarrow [0,1]$ such that $x_i\vert_{X \setminus U_i} \equiv 0$ and
\[ \sum_{i \in J} x_i + x_{X \setminus K} \equiv 1. \]
Continuity of these maps implies that there exist an open neighbourhood $U' \subset \bigcup \mathcal{U}$ of $K$ such that for all $q \in U'$, there exists some $i \in J$ with $x_i(q) > 0$. We put $\tilde{U} := HU'$, which is an open neighbourhood of $HK$.\\

We use the family $\{ x_i \vert i \in J \}$ to construct an $H$-invariant partition of unity on $\tilde{U}$ subordinate to $H\mathcal{U}$. For every $i \in J$ and every $h \in H$, we define
\[ x_{hi} : X \rightarrow [0,1] \]
\[ q \mapsto x_i(h^{-1} q). \]
Clearly, $x_{hi}$ is continuous and satisfies $x_{hi}\vert_{X \setminus hU_i} \equiv 0$. Furthermore, since for every $q \in U'$ there exists an $i \in J$ such that $x_i(q) > 0$, we find that for every $q \in \tilde{U} = HU' \supset HK$, there exists $i \in I$ such that $x_i(q) > 0$. As we have seen earlier, $H\mathcal{U}$ is locally finite, which implies that $\sum_{i \in I} x_i > 0$ is a locally finite, and thus well-defined, sum. Renormalizing, we obtain functions
\[ \overline{x_i}(q) := \frac{ x_i(q) }{ \sum_{i' \in I} x_{i'}(q) }, \]
which are well-defined on $HU$, form a continuous partition of unity subordinate to $H\mathcal{U}$, and satisfy $\overline{x_{hi}}(hq) = \overline{x_i}(q)$.\\

We now define the map $\Psi : \tilde{U} \rightarrow N(H\mathcal{U})$ by
\[ \Psi(q) := \frac{1}{\sum_{i \in I} \overline{x_i}(q) } \sum_{i \in I} \overline{x_i}(q) U_i, \]
where $U_i$ denotes the vertex in $N( H\mathcal{U} )$ induced by the element $U_i \in H \mathcal{U}$. This sum is locally finite by construction and thus, $\Psi$ is well-defined and continuous. Furthermore, $\Psi$ is $H$-equivariant since
\begin{equation*}
    \begin{split}
        \Psi(hq) & = \frac{1}{ \sum_{i \in I} \overline{x_i}(hq)} \sum_{i \in I} \overline{x_i}(hq) U_i\\
        & = \frac{1}{ \sum_{i \in I} \overline{x_i}(q)} \sum_{i \in I} \overline{x_{hi}}(hq) U_{hi}\\
        & = \frac{1}{\sum_{i \in I} \overline{x_i}(q)} \sum_{i \in I} \overline{x_i}(q) (h U_i)\\
        & = h \Psi(x),
    \end{split}
\end{equation*}
where we recall that $h$ acts on $N(H \mathcal{U})$ by sending the vertex $U_i$ to $hU_i$.

We are left to show the two properties. Let $U_{i_0} \in H\mathcal{U}$, $q \in U_{i_0}$, and $I_q$ be the set of indices such that $q \in U_i$ if and only if $i \in I_q$. By construction, $\overline{x_i}(q) = 0$ for all $i \notin I_q$. Thus, $\Psi(q)$ is a convex-combination of the vertices $U_i$ with $i \in I_q$, i.e.\,it is contained in the simplex spanned by the vertices $\{ U_i \vert i \in I_q \}$. Since $U_{i_0}$ contains $q$, we have $i_0 \in I_q$ and the first property follows. For the second statement, let $q \in \bigcup \mathcal{U}$. Then the only elements of $H \mathcal{U}$ that can contain $q$ are elements of $\adj(\mathcal{U})$. Therefore, $\Psi(q)$ is a convex combination of vertices that lie in $N( \adj(\mathcal{U}) )$. This proves the Lemma.
\end{proof}

From now on, we will work with the following assumptions and terminology.

\begin{assumption} \label{assum:Assumptions}
Let $X$ be a $\CAT$ space, $C \subset X$ be a closed convex subset, $\Sigma_{\epsilon}$ be a connected component of the topological boundary $\partial C_{\epsilon}$, $Y_{\epsilon}$ be the connected component of $X \setminus C_{\epsilon}$ bounded by $\Sigma_{\epsilon}$, and $H$ be a group acting properly and freely by isometries on $X$ such that $H$ preserves $C$ and $\Sigma_{\epsilon}$ and the action of $H$ on $C$ is cocompact.

Since the action of $H$ on $C$ is cocompact, the same is true for its action on $C_{\epsilon}$. Let $K \subset C_{\epsilon}$ be a compact set such that $HK = C_{\epsilon}$ and let $\mathcal{U}$ be a finite cover of $K$ by open sets. 
This cover induces an $H$-invariant cover $H\mathcal{U}$ of $HK = C_{\epsilon}$.
\end{assumption}

\begin{mydef} \label{def:Hfinecover}
    A cover $\mathcal{U}$ of a subset $X' \subset X$ is called {\it $H$-fine}, if for all $U \in \mathcal{U}$ and for all $h \in H \setminus \{ 1 \}$, $U \cap hU = \emptyset$.
    
\end{mydef}

We observe that, if $\mathcal{U}$ is $H$-fine, then the induced $H$-invariant cover $H\mathcal{U}$ of $HX'$ is $H$-fine as well, as $hg U \cap gU \neq \emptyset$ implies that $g^{-1} h g U \cap U \neq \emptyset$. The purpose of this definition is that, in the nerve of an $H$-fine cover, no two vertices in the same $H$-orbit can be connected by an edge. In consequence, no two simplices in the same $H$-orbit on the nerve can be faces of the same simplex. This guarantees good behaviour of the action of $H$ on $N(H\mathcal{U})$. Since $H$ acts properly and freely, one can always construct an $H$-fine cover by choosing sufficiently small open sets. (See Definition \ref{def:deltatight} and Remark \ref{rem:deltatightproperties}.)


Next, we show how a suitable map $j : N(H \mathcal{U}) \rightarrow Y_{\epsilon}$, together with the map $\Psi$ above, can be used to define a retraction $C_{\epsilon} \rightarrow \Sigma_{\epsilon}$ and how this implies contractibility of $\Sigma_{\epsilon}$. For this, we need the following definition.

\begin{mydef} \label{def:pushoff}
    A {\it $\mathcal{U}$-push-off through $\Sigma_{\epsilon}$} is a continuous map $j : N(H\mathcal{U}) \rightarrow Y_{\epsilon}$, such that there exists $\alpha_0 > \frac{\pi}{2}$ satisfying that for all $q \in \Sigma_{\epsilon}$, we have $\angle_q( \iota \circ \Psi(q), C) \geq \alpha_0$.

    We say that $C_{\epsilon}$ {\it has a push-off through $\Sigma_{\epsilon}$} if there exists a finite, $H$-fine cover $\mathcal{U}$ for some $K$ as in Assumption \ref{assum:Assumptions}, such that there exists a $\mathcal{U}$-push-off through $\Sigma_{\epsilon}$.

\end{mydef}

\begin{lem} \label{lem:pushoffimpliesretraction}
    If $C_{\epsilon}$ admits a push-off through $\Sigma_{\epsilon}$, then there exists a retraction $r : C_{\epsilon} \rightarrow \Sigma_{\epsilon}$, that is, a continuous map whose restriction to $\Sigma_{\epsilon}$ is the identity.
\end{lem}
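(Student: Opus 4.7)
The plan is to use the push-off $j$ together with the nerve projection $\Psi$ from Lemma \ref{lem:ProjectiontoNerve} to assign, to each $q \in C_\epsilon$, a canonical geodesic that leaves $C_\epsilon$ into $Y_\epsilon$, and then define $r(q)$ to be the point where this geodesic first crosses $\partial C_\epsilon$. The strict angle inequality built into the push-off axiom will guarantee that for $q \in \Sigma_\epsilon$ this crossing point is $q$ itself, so that $r$ restricts to the identity on $\Sigma_\epsilon$.

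Concretely, I will set $F := j \circ \Psi : C_\epsilon \to Y_\epsilon$, which is continuous with image in $Y_\epsilon \subset X \setminus C_\epsilon$, and let $\gamma_q : [0,1] \to X$ be the unique constant-speed geodesic from $q$ to $F(q)$. The convex function $\varphi_q(t) := d(\gamma_q(t), C) - \epsilon$ satisfies $\varphi_q(0) \leq 0$ and $\varphi_q(1) > 0$, so putting $t(q) := \sup\{ t \in [0,1] : \varphi_q(t) \leq 0 \}$ and $r(q) := \gamma_q(t(q))$ yields a well-defined point on $\partial C_\epsilon$; a short convexity argument shows that $\varphi_q(t) > 0$ for all $t > t(q)$, so the exit time is unique. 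Since $\gamma_q((t(q),1])$ is a connected subset of $X \setminus C_\epsilon$ containing $F(q) \in Y_\epsilon$, it lies in $Y_\epsilon$, whence $r(q) \in \overline{Y_\epsilon} \cap \partial C_\epsilon = \Sigma_\epsilon$. That $r|_{\Sigma_\epsilon} = \mathrm{id}$ is then immediate: for $q \in \Sigma_\epsilon$, the push-off axiom gives $\angle_q(F(q), C) \geq \alpha_0 > \pi/2$, so Lemma \ref{lem:largeangleimpliesgrowingdistance} yields $\gamma_q \cap C_\epsilon = \{q\}$, and hence $t(q) = 0$ and $r(q) = q$.

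The main remaining task, and the step requiring the most care, is continuity of $r$. Continuity of $F$ combined with continuous dependence of $\CAT$ geodesics on their endpoints makes $(q, s) \mapsto \gamma_q(s)$ jointly continuous, so it suffices to prove $q \mapsto t(q)$ is continuous. For $q_n \to q$, uniform convergence $\varphi_{q_n} \to \varphi_q$ on $[0,1]$ produces the upper bound $t(q_n) \leq t(q) + \delta$ by choosing a test point $t_+ \in (t(q), t(q)+\delta)$ with $\varphi_q(t_+) > 0$; when $t(q) > 0$ a symmetric choice $t_- \in (t(q) - \delta, t(q))$ with $\varphi_q(t_-) < 0$ yields the matching lower bound. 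The delicate case is $q \in \Sigma_\epsilon$, where $t(q) = 0$ and $r$ must transition from moving interior points to fixing boundary points. Here no lower bound is needed since $t(q_n) \geq 0$ automatically, and the strict inequality $\alpha_0 > \pi/2$ in the push-off axiom is precisely what makes this transition continuous.
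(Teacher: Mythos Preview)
Your proposal is correct and follows essentially the same approach as the paper: define $r(q)$ as the unique point where the geodesic from $q$ to $j\circ\Psi(q)$ meets $\Sigma_\epsilon$, use the angle condition together with Lemma~\ref{lem:largeangleimpliesgrowingdistance} to see that $r|_{\Sigma_\epsilon}=\mathrm{id}$, and prove continuity via a sequential argument based on convergence of geodesics and the convexity of $t\mapsto d(\gamma_q(t),C)$. The only differences are cosmetic---you parametrize on $[0,1]$ and phrase the argument through the exit-time function $t(q)$, whereas the paper works with arc-length and picks test points $p_\pm=\gamma(t_0\pm\epsilon')$ on either side of the crossing---but both arguments are the same in substance.
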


\begin{proof}
    Let $K \subset C_{\epsilon}$ be a compact set such that $HK = C_{\epsilon}$ and $\mathcal{U}$ a finite collection of open sets in $X$ covering $K$ such that there exists a $\mathcal{U}$-push-off $j : N(H\mathcal{U}) \rightarrow Y$ through $\Sigma_{\epsilon}$. We define $r$ as follows: Let $q \in C_{\epsilon}$ and consider the geodesic $\gamma$ from $q$ to $j \circ \Psi(q)$. We claim that $\gamma$ intersects $\Sigma_{\epsilon}$ in exactly one point. Since $j \circ \Psi(q)Ê\in Y_{\epsilon}$, $\gamma$ has to leave $C_{\epsilon}$ eventually and, since $Y_{\epsilon}$ is bounded by $\Sigma_{\epsilon}$, it has to leave through $\Sigma_{\epsilon}$. Thus $\gamma$ intersects $\Sigma_{\epsilon}$ at least once.
    
    In order to show that there is exactly one point of intersection, we distinguish between two cases: If $q \notin \Sigma_{\epsilon}$, then $\gamma$ has to pass through the interior of $C_{\epsilon}$ in order to reach $\Sigma_{\epsilon}$. Thus, when $\gamma$ intersects $\Sigma_{\epsilon}$, the convex function $d(\gamma( \cdot ), C )$ is increasing at the point of intersection. Thus $\gamma$ leaves $C_{\epsilon}$ immediately after intersecting $\Sigma_{\epsilon}$ and cannot return. If $q \in \Sigma_{\epsilon}$, then the property of a push-off implies that $\angle_q( j \circ \Psi(q), C) > \frac{\pi}{2}$ and by Lemma \ref{lem:largeangleimpliesgrowingdistance}, $d(\gamma( t ), C)$ is strictly increasing at $t = 0$. We conclude that $\gamma$ immediately leaves $C_{\epsilon}$ and cannot return due to convexity. It follows that, for all $q \in C_{\epsilon}$, the geodesic $\gamma$ intersects $\Sigma_{\epsilon}$ in exactly one point. We define $r(q)$ to be this point.

    Clearly $r : C_{\epsilon} \rightarrow \Sigma_{\epsilon}$ and if $q \in \Sigma_{\epsilon}$, then $r(q) = q$. We are left to show that $r$ is continuous. Let $q_n \rightarrow q$ be a converging sequence in $C_{\epsilon}$. We denote $q'_n := j \circ \Psi(q_n)$ and $q' := j \circ \Psi(q)$. Furthermore, we denote the geodesic from $q_n$ to $q'_n$ by $\gamma_n$ and the geodesic from $q$ to $q'$ by $\gamma$. Since $\iota$ and $\Psi$ are continuous, we have that $q'_n \rightarrow q'$ and, since geodesic segments depend continuously on their endpoints in $\CAT$ spaces, $\gamma_n \rightarrow \gamma$. We set $t_0 \in \mathbb{R}$ such that $\gamma(t_0) = r(q)$.

    Let $\epsilon' > 0$. We will show that $d( r(q_n), r(q) ) < \epsilon'$ for sufficiently large $n$. Let $p_- := \gamma( t_0 - \epsilon' )$ and $p_+ := \gamma( t_0 + \epsilon' )$ and set $\epsilon_- := d( p_-, \Sigma_{\epsilon} )$ and $\epsilon_+ := d( p_+, \Sigma_{\epsilon})$. Since $\gamma$ is a geodesic that intersects $\Sigma_{\epsilon}$ in exactly one point, which is $\gamma(t_0)$, these two numbers are positive and at most $\epsilon'$. Choose $N$ sufficiently large, such that for all $n \geq N$, there exist points $p_{n,-}, p_{n,+} \in \gamma_n$ such that $d(p_{n,-}, p_-) < \epsilon_-$ and $d(p_{n,+}, p_+) < \epsilon_+$. We conclude that $p_{n,-} \in \text{Int}(C_{\epsilon})$ and $p_{n,+} \in X \setminus C_{\epsilon}$. In particular, $r(q_n)$ has to lie on $\gamma_n$ between the two points $p_{n,-}$ and $p_{n,+}$. Since distance functions are convex in $\CAT$ spaces, we conclude that
    \[ d( r(q_n), r(q) ) \leq \max( d( p_{n,-}, p_- ), d( p_{n,+}, p_+ ) ) \leq \epsilon' \]
    for all $n \geq N$. It follows that $r(q_n) \rightarrow r(q)$ and $r$ is sequentially continuous. Since we are in a metric space, this implies that $r$ is continuous.    
\end{proof}

\begin{lem} \label{lem:retractionimpliescontractibility}
    Retracts of convex sets are contractible.
\end{lem}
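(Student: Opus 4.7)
The plan is to combine two standard facts: first, that any closed convex subset of a $\CAT$ space is contractible, and second, that a retract of a contractible space is itself contractible.

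For the first fact, let $C_{\epsilon}$ be the convex set and fix any basepoint $p_0 \in C_{\epsilon}$. Define a homotopy $H : C_{\epsilon} \times [0,1] \to C_{\epsilon}$ by
\[ H(q, t) := \gamma_{qp_0}\bigl(t \cdot d(q, p_0)\bigr), \]
i.e.\,slide each point $q$ along the unique geodesic toward $p_0$, reaching $p_0$ at time $t=1$. Since $C_{\epsilon}$ is convex, each such geodesic is entirely contained in $C_{\epsilon}$, so $H$ is well-defined. Continuity of $H$ follows from the fact that in a $\CAT$ space geodesic segments depend continuously on their endpoints (a standard consequence of convexity of the distance function along pairs of geodesics). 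Clearly $H(\cdot, 0) = \mathrm{id}_{C_{\epsilon}}$ and $H(\cdot, 1) \equiv p_0$, so $C_{\epsilon}$ is contractible.

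For the second fact, let $A \subset C_{\epsilon}$ be a retract, with continuous retraction $r : C_{\epsilon} \to A$ and inclusion $i : A \hookrightarrow C_{\epsilon}$ satisfying $r \circ i = \mathrm{id}_A$. Define $\tilde{H} : A \times [0,1] \to A$ by
\[ \tilde{H}(a, t) := r\bigl(H(i(a), t)\bigr). \]
This is continuous as a composition of continuous maps, satisfies $\tilde{H}(a, 0) = r(i(a)) = a$, and $\tilde{H}(a, 1) = r(p_0)$ is constant in $a$. Hence $A$ is contractible.

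The proof is essentially mechanical; no real obstacle arises, since both ingredients (existence of a geodesic contraction in a convex $\CAT$-subset and the abstract fact that retracts inherit contractibility) are completely standard.
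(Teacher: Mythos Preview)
Your proof is correct and follows essentially the same approach as the paper: show the convex set is contractible (the paper simply asserts this, while you spell out the geodesic contraction), then compose the contracting homotopy with the retraction to obtain a contraction of the retract.
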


\begin{proof}
    Let $C$ be a convex set, $\Sigma \subset C$ and $r : C \rightarrow \Sigma$ a retraction. Since $C$ is convex, it is contractible and thus there exists a homotopy $h : C \times [0,1] \rightarrow C$ between the identity and a constant map. We restrict $h$ to $\Sigma \times [0,1]$ and compose with the retraction $r$ to obtain a map $h' = r \circ h : \Sigma \times [0,1] \rightarrow \Sigma$, which is a homotopy between the identity and a constant map. It follows that $\Sigma$ is contractible.
\end{proof}

Lemma \ref{lem:pushoffimpliesretraction} and Lemma \ref{lem:retractionimpliescontractibility} leave us with the task of constructing a $\mathcal{U}$-push-off for a suitable cover $\mathcal{U}$. We will do so by constructing the push-off on the vertices of $N( H \mathcal{U} )$ and extending it from there to the entire simplicial complex.

To define an appropriate map on the vertices of $N( H \mathcal{U} )$, we need some notation. We define $\mathcal{U}_{\Sigma} := \{ U \in \mathcal{U} \vert U \cap \Sigma_{\epsilon} \neq \emptyset \}$ and $H \mathcal{U}_{\Sigma}$ to be the induced $H$-invariant cover. We point out that $\mathcal{U}_{\Sigma}$ and $H\mathcal{U}_{\Sigma}$ are both locally finite and cover $K \cap \Sigma_{\epsilon}$ and $\Sigma_{\epsilon}$ respectively. We recall that every element $U \in H\mathcal{U}$ induces a vertex in $N(H\mathcal{U})$ and if we have a map $\iota : N(H \mathcal{U})^{(0)} \rightarrow Y_{\epsilon}$ on the $0$-skeleton of the nerve, we can consider the convex hull $C( \iota(U_1), \dots, \iota(U_{k+1}))$ of the image of $k+1$ many vertices.

\begin{mydef} \label{def:pushoffgrid}
        Let $\alpha > \frac{\pi}{2}$ and $\mathcal{U}$ a finite, $H$-fine cover of a compact set $K$ that contains a fundamental domain of $H \curvearrowright C_{\epsilon}$. A {\it $(\mathcal{U}, \alpha)$-push-off grid} is an $H$-equivariant map $\iota : N( H\mathcal{U} )^{(0)} \rightarrow Y_{\epsilon}$ such that for all $U \in \mathcal{U}_{\Sigma}$ there exists some $q \in U$ such that 
        \[ \angle_q( \iota(U) , C) \geq \alpha. \]
\end{mydef}

The key obstacle to extending a push-off grid to an actual push-off is that we need to control the angle $\angle_q( \iota \circ \Psi(q), C)$ for such an extension. To formulate a sufficient condition to do so, we need some terminology.

\begin{mydef} \label{def:pushoffdistance}
    Let $\mathcal{U}$ be a finite, $H$-fine cover of a compact set $K$ that contains a fundamental domain of $H \curvearrowright C_{\epsilon}$. Let $\iota : N(H\mathcal{U})^{(0)} \rightarrow Y_{\epsilon}$ be an $H$-equivariant map. We define the {\it push-off distance} of $\iota$ to be
    \[ d_{push-off}(\iota) := \inf \{ d( \iota(U), C_{\epsilon}) \vert U \in N(H\mathcal{U})^{(0)}  \}. \]
\end{mydef}

Since $\mathcal{U}$ is finite, $\iota$ is $H$-equivariant, and $H$ acts by isometries, the minimum in the expression above is attained. Furthermore, since $\iota(U) \notin C_{\epsilon}$ for all $U \in \mathcal{U}$, the push-off distance is always positive.

\begin{mydef} \label{def:diameterofpushoffgrid}
    Let $S$ be a simplicial complex and $\iota : S^{(0)} \rightarrow X$ be a map. We define the {\it diameter of $\iota$} to be
    \[ \diam(\iota) := \max \{ \diam( \iota( \sigma^{(0)} ) ) \vert \sigma \text{ a simplex in } S \}. \]
\end{mydef}

\begin{mydef} \label{def:deltatight}
    Let $\mathcal{U}$ be a finite, $H$-fine cover of a compact set $K$ that satisfies $HK = C_{\epsilon}$. Let $\delta, \delta' > 0$ and $\iota : N(H\mathcal{U})^{(0)} \rightarrow Y_{\epsilon}$ an $H$-equivariant map.

    We say that $\iota$ is {\it $(\delta, \delta')$-tight}, if the following two conditions hold:
    \begin{enumerate}
        \item For all $U \in H\mathcal{U}$, $\diam(U) \leq \delta$.

        \item $\diam(\iota) \leq \delta'$
    \end{enumerate}

    We say that $\iota$ is {\it $(\delta, \delta')$-tight on the boundary}, if condition (2) only applies to the restriction of $\iota$ to the subcomplex $N(H \mathcal{U}_{\Sigma}) \subset N(H\mathcal{U})$.
\end{mydef}

\begin{rem} \label{rem:deltatightproperties}
    There are a couple of immediate remarks to make about this definition.
    \begin{enumerate}
        \item[$\bullet$] Since $\iota$ is $H$-equivariant and $H$ acts by isometries, it is sufficient to check the two conditions on $N(\adj(\mathcal{U}))$. Furthermore, condition (2) can be checked entirely on the $1$-skeleton of $N(\adj(\mathcal{U}))$.

        \item[$\bullet$] Since $H$ acts properly, freely, and cocompactly on $X$, there exists $\delta > 0$ such that any $H$-invariant cover $H\mathcal{U}$, whose elements have diameter at most $\delta$, is $H$-fine. Thus, we can enforce the condition that our cover has to be $H$-fine by choosing $\delta$ sufficiently small, which is what we are going to do from now on.

        
    \end{enumerate}
\end{rem}

For the next definition, we recall that we defined $N_R(S) := \{ x \in X \vert d(x,S) \leq R$.

\begin{mydef} \label{def:smalldeltarelativetoR}
    Let $\alpha > \frac{\pi}{2}$, $K \subset C_{\epsilon}$ a compact subset such that $HK = C_{\epsilon}$, and $K_{out} \subset Y_{\epsilon}$ a compact set. We say that {\it $(\delta, \delta')$ are small relative to $K$, $K_{out}$, and $\alpha$} if the following two properties hold:
    \begin{enumerate}
        \item If $hK \cap K = \emptyset$, then $\inf\{ d(p, q) \vert p \in K, q \in hK \} > 2\delta$.
        
        \item $N_{\delta'}(K_{out}) \cap C_{\epsilon} = \emptyset$

        \item For all $q_1, q_2 \in N_{\delta}(K) \cap \Sigma_{\epsilon}$ and all $q'_1, q'_2 \in N_{\delta'}(K_{out})$, we have
    \[ d(q_1, q_2) \leq \delta, d(q'_1, q'_2) \leq \delta' \Rightarrow \vert \angle_{q_1}(q'_1, C) - \angle_{q_2}(q'_2, C) \vert \leq \frac{\alpha}{2} - \frac{\pi}{4}. \]    
    \end{enumerate}  
\end{mydef}

Since $X$ is proper and $H$ acts properly on $X$, one can always choose $\delta > 0$ sufficiently small so that condition (1) is satisfied. 

\begin{rem}
    If $X$ has an upper and a lower curvature bound, Remark \ref{rem:continuityofangles} implies that for any triple $K$, $K_{out}$, $\alpha$ there exists a pair $(\delta, \delta')$ that is small relative to $K$, $K_{out}$, and $\alpha$.
\end{rem}

This terminology allows us to formulate the following Lemma.

\begin{lem} \label{lem:extendingpushoffgridtopushoff}
    Let $\alpha > \frac{\pi}{2}$, $\mathcal{U}$ be a finite, $H$-fine cover of a compact set $K$ such that $HK = C_{\epsilon}$, and $\iota$ be a $(\delta, \delta')$-tight, $(\mathcal{U}, \alpha)$-push-off grid. Suppose $(\delta, \delta')$ are small relative to $K$, $\iota( \adj(\mathcal{U} ) )$, and $\alpha$. Then $\iota$ can be extended to a $\mathcal{U}$-push-off.
\end{lem}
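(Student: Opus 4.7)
The plan is to extend the push-off grid $\iota$ from $N(H\mathcal{U})^{(0)}$ to a continuous, $H$-equivariant map $j : N(H\mathcal{U}) \to Y_{\epsilon}$ via the $\CAT$ center-of-mass construction, and then to verify the angle lower bound directly from the smallness hypothesis. For each simplex $\sigma$ with vertices $V_0,\dots,V_k$ and a point $x\in\sigma$ with barycentric coordinates $(\lambda_0,\dots,\lambda_k)$, I would define $j(x)$ to be the unique minimizer in $X$ of $\sum_i \lambda_i\, d(\,\cdot\,,\iota(V_i))^2$. This map is continuous, extends $\iota$, and is $H$-equivariant because $H$ acts on the $\CAT$ space $X$ by isometries. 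Moreover $j(\sigma)$ lies in the closed convex hull of $\iota(\sigma^{(0)})$, and by convexity of the distance functions this hull has diameter at most $\diam(\iota(\sigma^{(0)}))\leq\delta'$, so $d(x,\iota(V_i))\leq\delta'$ for every $x\in j(\sigma)$ and every vertex $V_i$ of $\sigma$.

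Next I would show $j(N(H\mathcal{U}))\subset Y_{\epsilon}$. For an arbitrary simplex $\sigma$ of $N(H\mathcal{U})$, $H$-equivariance provides $h\in H$ with $h\sigma\subset N(\adj(\mathcal{U}))$, so that the vertex-images of $h\sigma$ all lie in $K_{out}:=\iota(\adj(\mathcal{U})^{(0)})$. The diameter estimate then gives $j(h\sigma)\subset N_{\delta'}(K_{out})$, and smallness condition~(2) says $N_{\delta'}(K_{out})\cap C_{\epsilon}=\emptyset$. Thus $j(h\sigma)$, and hence its $H$-translate $j(\sigma)$, is disjoint from $C_{\epsilon}$. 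Since $j(\sigma)$ is connected and meets $Y_{\epsilon}$ at the vertex-images $\iota(V_i)$, and $Y_{\epsilon}$ is a connected component of $X\setminus C_{\epsilon}$, we conclude $j(\sigma)\subset Y_{\epsilon}$.

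To check the angle condition, let $q\in\Sigma_{\epsilon}$ and use cocompactness to find $h\in H$ with $\bar q:= hq\in K\cap\Sigma_{\epsilon}$; by $H$-equivariance of $\iota,\Psi,j$ and isometric invariance of angles, it suffices to bound $\angle_{\bar q}(j(\Psi(\bar q)),C)$ from below. Choose $U\in\mathcal{U}_{\Sigma}$ with $\bar q\in U$; the simplex $\sigma\ni\Psi(\bar q)$ has every vertex containing $\bar q\in K\subset\bigcup\mathcal{U}$, so every vertex lies in $\adj(\mathcal{U})$ and, in particular, $U$ is a vertex of $\sigma$. Setting $x:=j(\Psi(\bar q))$, the previous step gives $d(x,\iota(U))\leq\delta'$. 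The push-off grid property supplies $q_U\in U\cap\Sigma_{\epsilon}$ with $\angle_{q_U}(\iota(U),C)\geq\alpha$; since $\diam(U)\leq\delta$ and $U\cap K\neq\emptyset$, both $\bar q,q_U\in N_{\delta}(K)\cap\Sigma_{\epsilon}$ and $d(\bar q,q_U)\leq\delta$. Smallness condition~(3) then yields
\[ \bigl|\angle_{\bar q}(x,C)-\angle_{q_U}(\iota(U),C)\bigr|\leq \tfrac{\alpha}{2}-\tfrac{\pi}{4}, \]
so $\angle_{\bar q}(x,C)\geq\tfrac{\alpha}{2}+\tfrac{\pi}{4}>\tfrac{\pi}{2}$, and $\alpha_0:=\tfrac{\alpha}{2}+\tfrac{\pi}{4}$ witnesses that $j$ is a $\mathcal{U}$-push-off through $\Sigma_{\epsilon}$.

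The main obstacle is the first step: producing the continuous, $H$-equivariant simplex-wise extension with sharp diameter control. The $\CAT$ center-of-mass handles this uniformly, though one must invoke the standard fact that the resulting map sends a geodesic simplex into the closed convex hull of its vertex-images and that, by convexity of distance, the pairwise diameter $\delta'$ at the vertices propagates to the entire image of the simplex. This diameter propagation is what links the tightness hypothesis simultaneously to the statement that $j$ lands in $Y_{\epsilon}$ and to the angle estimate, and it is the pivotal technical input to the lemma.
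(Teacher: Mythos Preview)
Your argument is correct, and the overall architecture---extend $\iota$ simplex-wise into the closed convex hull of the vertex-images, use the $\delta'$-diameter bound on that hull to keep the image off $C_\epsilon$ via smallness condition~(2), then invoke smallness condition~(3) for the angle estimate---matches the paper's. There is one slip: the clause ``in particular, $U$ is a vertex of $\sigma$'' does not follow from the fact that every vertex of the minimal simplex $\sigma\ni\Psi(\bar q)$ lies in $\adj(\mathcal{U})$. The fix is immediate: Property~(1) of Lemma~\ref{lem:ProjectiontoNerve} directly hands you a simplex $\tau\ni\Psi(\bar q)$ having $U$ as a vertex (equivalently, $\bar q\in U\cap\bigcap_{V\in\sigma^{(0)}}V$, so $\sigma^{(0)}\cup\{U\}$ spans a simplex). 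Either way $j(\Psi(\bar q))\in C(\iota(\tau^{(0)}))$, a set of diameter $\le\delta'$ containing $\iota(U)$, and you recover $d(x,\iota(U))\le\delta'$ as needed.

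Where your route genuinely differs from the paper is in how $j$ is built. The paper extends $\iota$ by induction over skeleta: having $j$ on $\partial\sigma$ with image in the contractible set $C(\iota(\sigma^{(0)}))$, it fills $\sigma$ continuously and then uses $H$-fineness of the cover to propagate the extension $H$-equivariantly without collisions between $\sigma$ and its translates. Your center-of-mass formula $j(x)=\arg\min_y\sum_i\lambda_i\,d(y,\iota(V_i))^2$ produces the same containment $j(\sigma)\subset C(\iota(\sigma^{(0)}))$ in one stroke, is automatically compatible across faces, and is $H$-equivariant on the nose because the minimizer is canonical under isometries. For the present lemma this is cleaner and sidesteps the bookkeeping with $H$-fineness; the paper's inductive filling, by contrast, only needs contractibility of the target hulls and is the mechanism it reuses for the more general subdivision version proved immediately afterwards.
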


In order to produce tight push-off grids for small $(\delta, \delta')$, we will subdivide our simplicial complexes in a way that reduces the size of every simplex in a uniform way. Unfortunately, to work with these subdivisions, we will need a more general and technical version of the Lemma above. We now provide the necessary terminology for subdivisions and then state and prove the more technical version of this Lemma.

\begin{mydef} \label{def:partialsubdivisions}
    Let $S$ be a simplicial complex. A {\it subdivision} $S_{sub}$ of $S$ is called {\it a locally finite subdivision} if every simplex of $S$ is subdivided into finitely many simplices.
\end{mydef}


\begin{mydef} \label{def:shrinkingsubdivision}
    Let $(X,d)$ be a metric space, $S$ be a locally finite simplicial complex, $\iota : S^{(0)} \rightarrow X$ be a 
    map and $\frac{1}{2} \leq \lambda < 1$.
    
    Let $S_{sub}$ be a locally finite subdivision of $S$ and $\iota_{1} : S_{sub}^{(0)} \rightarrow X$ an extension of $\iota$. We call the pair $(S_{sub}, \iota_1)$ a {\it $\lambda$-shrinking subdivision of $(S, \iota)$} if the following two properties hold:
    \begin{enumerate}
        \item If $\sigma'$ is a simplex in $S_{sub}$ and $\sigma$ the unique least-dimensional simplex in $S$ containing $\sigma'$, then
        \[ \diam\left( \iota\left( \sigma'^{(0)} \right) \right) \leq \lambda \diam\left( \iota\left( \sigma^{(0)} \right) \right). \]

        \item For every simplex $\sigma$ in $S$, we have that
        \[ \diam\left( \iota\left( \sigma_{sub}^{(0)} \right) \right) \leq \diam\left( \iota\left( \sigma^{(0)} \right) \right), \]
        where $\sigma_{sub}^{(0)} = \sigma \cap S_{sub}^{(0)}$ denotes the set of all vertices of $S_{sub}$ that lie in the (closed) simplex $\sigma$.
    \end{enumerate}
\end{mydef}

We will need to successively subdivide the same simplicial complex more and more, which motivates the following definition.

\begin{mydef} \label{def:higherordersubdivisions}
    If $(S_{sub}, \iota_1)$ is a $\lambda$-shrinking subdivision of $(S, \iota)$, we say it is of {\it order 1}. We inductively define a {\it $\lambda$-shrinking subdivision of order $n$} to be a $\lambda$-shrinking subdivision of a $\lambda$-shrinking subdivision of order $n-1$.
\end{mydef}

\begin{lem} \label{lem:Diametersinshrinkingsubdivisions}
    Let $(S_n, \iota_n)$ be a $\lambda$-shrinking subdivision of $(S, \iota)$ of order $n$. 
    \begin{enumerate}
        \item For every simplex $\sigma'$ in $S_n$, we have
        \[ \diam\left( \iota\left( \sigma'^{(0)} \right) \right) \leq \lambda^n \diam(\iota). \]

        \item For every vertex $v \in S_n^{(0)}$, let $\sigma$ be the least-dimensional (closed) simplex in $S$ containing $v$. Then every vertex $v_0 \in \sigma^{(0)}$ satisfies
        \[ d( \iota_n(v_0), \iota_n(v) ) \leq \sum_{i=0}^{n-1} \lambda^n \diam\left(\iota\left(\sigma^{(0)}\right)\right) \leq \frac{ \diam\left(\iota\left(\sigma^{(0)} \right) \right) }{1-\lambda}.\]
        In particular, if $\sigma_n^{(0)}$ denotes the set of vertices in $S_n$ that lie in $\sigma$, then
        \[ \diam\left( \iota\left( \sigma_n^{(0)} \right) \right) \leq \frac{2}{1-\lambda} \diam\left( \iota\left( \sigma^{(0)} \right) \right) \leq \frac{2}{1-\lambda} \diam(\iota). \]

        \item The images of $\iota$ and $\iota_n$ satisfy
        \[ \iota( S_n^{(0)} ) \subset N_{ \frac{ \diam(\iota) }{1-\lambda} }\left(\iota\left(S^{(0)} \right) \right). \]
    \end{enumerate}
\end{lem}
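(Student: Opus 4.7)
My plan is to prove the three items in order, with the bulk of the work in Part (2) and Parts (1) and (3) playing supporting roles. Throughout, I identify $\iota$ with its successive extensions $\iota_k$, which is legitimate since each $\iota_k$ restricts to $\iota_{k-1}$ on $S_{k-1}^{(0)}$ by the definition of a shrinking subdivision.

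For Part (1), I induct on $n$, and in fact prove the slightly finer statement that if $\sigma'$ is a simplex in $S_n$ and $\sigma_0$ denotes the smallest simplex of $S$ containing $\sigma'$, then $\diam(\iota_n(\sigma'^{(0)})) \leq \lambda^n \diam(\iota(\sigma_0^{(0)}))$. The base case $n=1$ is exactly property (1) of Definition~\ref{def:shrinkingsubdivision}. For the inductive step, let $\tau$ be the smallest simplex of $S_{n-1}$ containing $\sigma'$; property (1) applied to $S_n$ as a subdivision of $S_{n-1}$ gives $\diam(\iota_n(\sigma'^{(0)})) \leq \lambda \diam(\iota_{n-1}(\tau^{(0)}))$. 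One checks that the smallest simplex of $S$ containing $\tau$ is again $\sigma_0$, since $\tau$ sits inside the subdivision of $\sigma_0$ and contains $\sigma'$; the inductive hypothesis then supplies $\diam(\iota_{n-1}(\tau^{(0)})) \leq \lambda^{n-1} \diam(\iota(\sigma_0^{(0)}))$, and multiplying by $\lambda$ closes the induction. Bounding $\diam(\iota(\sigma_0^{(0)}))$ by $\diam(\iota)$ recovers the stated form of Part (1).

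Part (2) is the heart of the matter, and the idea is to chain $v_0$ to $v$ through the intermediate complexes. For each $k = 0, 1, \dots, n$, let $\tau_k$ denote the least-dimensional closed simplex of $S_k$ containing $v$; then $\tau_0 = \sigma$, $\tau_n = \{v\}$, and $\tau_k \subseteq \tau_{k-1}$ because $\tau_{k-1}$ is a union of simplices of $S_k$, at least one of which must contain $v$. Choose vertices $w_k \in \tau_k^{(0)}$ with $w_0 = v_0$ and $w_n = v$. Then both $w_{k-1}$ and $w_k$ are vertices of $S_k$ lying in the closed simplex $\tau_{k-1}$, so property (2) of Definition~\ref{def:shrinkingsubdivision} applied to $S_k$ as a subdivision of $S_{k-1}$ bounds $d(\iota_n(w_{k-1}), \iota_n(w_k))$ by $\diam(\iota_{k-1}(\tau_{k-1}^{(0)}))$. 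Since the smallest simplex of $S$ containing $\tau_{k-1}$ is $\sigma$ itself, the refined Part (1) gives $\diam(\iota_{k-1}(\tau_{k-1}^{(0)})) \leq \lambda^{k-1} \diam(\iota(\sigma^{(0)}))$. Summing over $k = 1, \dots, n$ via the triangle inequality yields the geometric series $\sum_{k=0}^{n-1} \lambda^k \diam(\iota(\sigma^{(0)})) \leq \frac{\diam(\iota(\sigma^{(0)}))}{1-\lambda}$.

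For the second inequality of Part (2), I apply the first to each of two vertices $v, v' \in \sigma_n^{(0)}$: letting $\sigma_v, \sigma_{v'}$ be their smallest-containing simplices in $S$ (both faces of $\sigma$) and choosing anchors $v_0 \in \sigma_v^{(0)}$, $v_0' \in \sigma_{v'}^{(0)}$, the two individual estimates combine with $d(\iota(v_0), \iota(v_0')) \leq \diam(\iota(\sigma^{(0)}))$ via the triangle inequality to deliver the stated diameter bound. Part (3) is then immediate: for any $v \in S_n^{(0)}$, pick a vertex $v_0$ of the smallest simplex of $S$ containing $v$, which lies in $S^{(0)}$, and apply the first inequality of Part (2). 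The main bookkeeping obstacle is verifying that the minimal containing simplices $\tau_k$ really do form a decreasing chain inside $\sigma$ and tracking which complex each vertex belongs to at each level; once the chain $\tau_0 \supseteq \tau_1 \supseteq \cdots \supseteq \tau_n$ is set up correctly, properties (1) and (2) of the shrinking subdivision plug in mechanically and the geometric series carries the rest.
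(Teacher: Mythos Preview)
Your proof is correct and follows essentially the same route as the paper: both arguments build the chain $\tau_0 \supseteq \tau_1 \supseteq \cdots \supseteq \tau_n$ of least-dimensional containing simplices, pick vertices $w_k \in \tau_k^{(0)}$, bound each step via property~(2) of Definition~\ref{def:shrinkingsubdivision}, and sum the resulting geometric series. Your explicit induction for the refined version of Part~(1) is a welcome clarification of what the paper calls ``an immediate consequence.''
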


\begin{proof}
    Statement (1) is an immediate consequence of the definition of $\lambda$-shrinking subdivisions. For statement (2), let $(S_1, \iota_1), \dots, (S_n, \iota_n)$ be the sequence of $\lambda$-shrinking subdivisions of increasing order that subdivides $S$ into $S_n$ in $n$ many steps. Let $v$ be a vertex in $S_n$ and $\sigma$ be the least-dimensional (closed) simplex in $S$ containing $v$. Let $\sigma_{n-1}$ the least-dimensional (closed) simplex in $S_{n-1}$ containing $v$. Choose a vertex $v_{n-1} \in S_{n-1}^{(0)}$ of $\sigma_{n-1}$. By property (2) in the definition of a $\lambda$-shrinking subdivision, we know that
    \[ d( \iota_n(v_{n-1}), \iota_n(v) ) \leq \diam( \iota( \sigma_{n-1}^{(0)} ) ) \leq \lambda^{n-1} \diam(\iota( \sigma^{(0)} ) ). \]
    We inductively obtain a sequence of simplices $\sigma_{n-1}, \sigma_{n-2}, \dots, \sigma_0 = \sigma$ and a sequence of vertices $v = v_n, v_{n-1}, v_{n-2}, \dots, v_0$ of vertices such that $v_i$ is a vertex of $\sigma_i$, which is a simplex of the subdivision $S_i$. These vertices satisfy the inequalities
    \[ d( \iota_n( v_i ), \iota_n( v_{i+1} ) ) \leq \diam( \iota_i( \sigma_i^{(0)} ) ) \leq \lambda^{i} \diam( \iota( \sigma^{(0)} ) ). \]
    Therefore, we estimate
    \[ d( \iota_n( v ), \iota_n( v_0 ) ) \leq \sum_{i = 0}^{n-1} \lambda^{i} \diam( \iota( \sigma^{(0)} ) ) \leq \frac{ 1 }{(1-\lambda)} \diam( \iota( \sigma^{(0)} ) ). \]
    Since $\sigma_0 = \sigma$ and the vertex $v_0$ could be chosen to be any vertex of $\sigma_0$, we obtain the estimate above for every $v_0 \in \sigma^{(0)}$. This proves statement (2). Statement (3) is an immediate consequence of statement (2).
\end{proof}

\begin{lem} \label{lem:extendingpushoffgridtopushoff}
    Let $\lambda \in [\frac{1}{2},1)$, $\alpha > \frac{\pi}{2}$, $\mathcal{U}$ be a finite, $H$-fine cover of a compact set $K$ such that $HK = C_{\epsilon}$, and $\iota$ a $(\mathcal{U}, \alpha)$-push-off grid which is $(\delta, (1-\lambda) \delta' )$-tight on the boundary.
    
    Suppose $(\delta, \delta')$ is small relative to $K$, $\iota(\adj(\mathcal{U}) )$, and $\alpha$ and suppose $\iota$ admits an $H$-equivariant $\lambda$-shrinking subdivision $(S_n, \iota_n)$ of order $n$ which is $(\delta, \delta')$-tight, has push-off distance $> \delta'$, and satisfies $\Ima(\iota_n) \subset Y_{\epsilon}$. Then $\iota_n$ can be extended to a $\mathcal{U}$-push-off through $\Sigma_{\epsilon}$.
\end{lem}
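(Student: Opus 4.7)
The plan is to construct an $H$-equivariant, continuous map $j : N(H\mathcal{U}) \to Y_{\epsilon}$ extending $\iota_n$ via the Fr\'echet mean in the ambient $\CAT$ space $X$: for each simplex $\sigma'$ of $S_n$ parametrised by barycentric coordinates $(t_v)_v$, set
\[ j(t) := \arg\min_{x \in X} \sum_v t_v \, d(x, \iota_n(v))^2, \]
and pull back along $i_n : N(H\mathcal{U}) \to S_n$. This Fr\'echet mean is well-defined and unique in a $\CAT$ space, restricts correctly on faces, recovers $\iota_n$ at each vertex, is continuous in $t$, commutes with the isometric $H$-action (hence is $H$-equivariant on the adjacency, which by Remark \ref{rem:equivariantextensionsfromtheadjacency} suffices), and sends each simplex of $S_n$ into the closed convex hull of its vertex images (by the standard nearest-point projection argument in a $\CAT$ space).

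For $j(S_n) \subset Y_{\epsilon}$, fix a simplex $\sigma'$ in $S_n$ and a vertex $v$ of $\sigma'$. The push-off distance hypothesis gives $d(\iota_n(v), C_{\epsilon}) > \delta'$, while $(\delta,\delta')$-tightness of $\iota_n$ gives $\diam \iota_n((\sigma')^{(0)}) \leq \delta'$. Convexity of balls in $X$ then places $C(\iota_n((\sigma')^{(0)}))$ inside $B_{\delta'}(\iota_n(v))$, so the hull stays at positive distance from $C_{\epsilon}$; connectedness and $\iota_n(v) \in Y_{\epsilon}$ then put it in $Y_{\epsilon}$.

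The heart of the argument is the push-off angle condition. Let $q \in \Sigma_{\epsilon}$; by $H$-equivariance of $j$, $\Psi$, and of the angle functional under isometries preserving $C$, I may assume $q \in K$ and pick $U \in \mathcal{U}$ with $q \in U$, so $U \in \mathcal{U}_{\Sigma}$. The push-off grid hypothesis furnishes $q_U \in U \cap \Sigma_{\epsilon}$ with $\angle_{q_U}(\iota(U), C) \geq \alpha$. The simplex $\sigma_q \subset N(H\mathcal{U})$ containing $\Psi(q)$ is spanned by $\{V \in H\mathcal{U} : q \in V\}$; every such $V$ meets $U$ and intersects $\Sigma_{\epsilon}$, so $\sigma_q$ is a simplex of $N(\adj(\mathcal{U})) \cap N(H\mathcal{U}_{\Sigma})$, giving $\diam \iota(\sigma_q^{(0)}) \leq (1-\lambda)\delta'$ by the boundary tightness of $\iota$. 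Since $i_n(\Psi(q))$ lies in a sub-simplex $\sigma''$ of $S_n$ contained in $\sigma_q$, the remaining task is to bound $d(j \circ \Psi(q), \iota(U)) \leq \delta'$: Lemma \ref{lem:Diametersinshrinkingsubdivisions}(2) yields $d(\iota_n(v), \iota(v_0)) \leq \delta'$ for every vertex $v$ of $\sigma''$ and every $v_0$ in the minimal $S$-carrier $\tau_v \subset \sigma_q$ of $v$, and combined with the $(\delta,\delta')$-tightness of $\iota_n$ on $\sigma''$ this produces the desired bound after choosing $U \in \sigma_q^{(0)}$ compatibly with the structure of $\sigma''$.

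Once the distance bound is in hand, $q, q_U \in N_\delta(K) \cap \Sigma_\epsilon$ satisfy $d(q, q_U) \leq \diam U \leq \delta$, and $\iota(U), j \circ \Psi(q) \in N_{\delta'}(\iota(\adj(\mathcal{U})))$ satisfy $d(\iota(U), j \circ \Psi(q)) \leq \delta'$, so the smallness hypothesis on $(\delta, \delta')$ yields
\[ \bigl| \angle_q(j \circ \Psi(q), C) - \angle_{q_U}(\iota(U), C) \bigr| \leq \tfrac{\alpha}{2} - \tfrac{\pi}{4}, \]
whence $\angle_q(j \circ \Psi(q), C) \geq \alpha - (\tfrac{\alpha}{2} - \tfrac{\pi}{4}) = \tfrac{\alpha}{2} + \tfrac{\pi}{4} > \tfrac{\pi}{2}$, so $j$ is a $\mathcal{U}$-push-off with $\alpha_0 = \alpha/2 + \pi/4$. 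The delicate step is the distance bound $d(j \circ \Psi(q), \iota(U)) \leq \delta'$, where the tightness factor $(1-\lambda)$ on the boundary is designed to cancel exactly the expansion factor $1/(1-\lambda)$ from iterated shrinking subdivisions given by Lemma \ref{lem:Diametersinshrinkingsubdivisions}(2); any slack in this cancellation would force a second invocation of the smallness hypothesis and destroy the strict inequality $\alpha_0 > \pi/2$.
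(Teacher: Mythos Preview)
Your proof is correct and largely parallels the paper's, but with one genuinely different ingredient: you build the extension $j$ via the Fr\'echet (weighted $L^2$) barycenter on each simplex of $S_n$, whereas the paper extends $\iota_n$ skeleton-by-skeleton using only that convex hulls are contractible. Your construction is cleaner---it gives a canonical formula, so continuity, face-compatibility, and $H$-equivariance are automatic and require none of the bookkeeping the paper carries out (checking that extending over $\sigma$ and then transporting to $h\sigma$ is consistent, using $H$-fineness to rule out conflicts, etc.). The paper's approach, by contrast, uses nothing beyond contractibility of convex sets and would survive in contexts where a barycenter map is unavailable. Both land each simplex in the convex hull of its vertex images, which is all that the rest of the argument needs.

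The angle estimate is where you should tighten the exposition. As written, your two displayed ingredients---$d(\iota_n(v),\iota(v_0))\leq\delta'$ from Lemma~\ref{lem:Diametersinshrinkingsubdivisions}(2) and $\diam(\iota_n((\sigma'')^{(0)}))\leq\delta'$ from tightness of $\iota_n$---only combine to $2\delta'$ by the triangle inequality, not the $\delta'$ you need. The sharp bound, which the paper writes out, uses the \emph{finite} sum in Lemma~\ref{lem:Diametersinshrinkingsubdivisions}(2), giving $d(\iota_n(v),\iota(v_0))\leq(1-\lambda^n)\delta'$, together with the iterated shrinking bound $\diam(\iota_n((\sigma'')^{(0)}))\leq\lambda^n\diam(\iota(\sigma_q^{(0)}))\leq\lambda^n(1-\lambda)\delta'$; these sum to at most $\delta'$. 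Your final paragraph makes clear you understand this cancellation, so this is a matter of writing the two estimates in their sharp forms rather than a conceptual gap. Your remark about ``choosing $U\in\sigma_q^{(0)}$ compatibly with the structure of $\sigma''$'' is in fact a point the paper glosses over: one must take $U$ to be a vertex of the minimal $S$-carrier $\tau_v$ of a chosen vertex $v$ of $\sigma''$, which is legitimate since every vertex of $\sigma_q$ contains $q$ and lies in $H\mathcal{U}_\Sigma$.
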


\begin{proof}
    Let $K_{out} := N_{\delta'}( \iota( \adj(\mathcal{U} ) ) )$. Our strategy is to construct an $H$-equivariant map $j : N( H(\mathcal{U}) ) \rightarrow N_{\delta'}(HK_{out})$, which satisfies the necessary condition on angles to be a push-off of $C_{\epsilon}$ through $\Sigma_{\epsilon}$. 

    Let $(S_n, \iota_n)$ be a $(\delta, \delta')$-tight, $\lambda$-shrinking subdivision of $N( H \mathcal{U} )$. We first define $j$ on the vertices of $S_n$ by setting $j(U) := \iota_n(U)$ for every vertex $U \in S_n$. Since $\iota_n$ is $H$-equivariant, $j$ is $H$-equivariant where we have defined it. We now use induction to extend $j$ to the $(l+1)$-skeleton of $N( H \mathcal{U} )$ for all $l \geq 0$.

    \begin{IndAssum*}
        Suppose $j$ has been defined on a subcomplex of $S_n$ that contains the $l$-skeleton of the complex and possibly some $(l+1)$-simplices such that, if $j$ is defined on an $(l+1)$-simplex $\sigma$, then it is defined on every translate $h\sigma$ in $S_n$, and $j$ is $H$-equivariant wherever it is defined. Furthermore, suppose that for all simplices $\sigma(U_1, \dots, U_{k+1})$ on which $j$ is already defined, we have $j( \sigma(U_1, \dots, U_{k+1}) ) \subset C( \iota(U_1), \dots, \iota(U_{k+1}) )$.
    \end{IndAssum*}

    Clearly, the induction assumption is satisfied on the $0$-skeleton with how we defined $j$ above. Now suppose the induction assumption holds for some $l$. Let $\sigma = \sigma(U_1, \dots, U_{l+2})$ be a simplex in $N(H\mathcal{U})$ on whose interior $j$ has not been defined yet. Since $j$ is defined on the $l$-skeleton, we see that $j$ is defined on $\partial \sigma$ and $j(\partial \sigma) \subset C( \iota(U_1), \dots, \iota(U_{l+2}) )$. Therefore, the restriction of $j$ to $\partial \sigma$ defines an element of the $l$-th homotopy group $\pi_l(C(\iota(U_1), \dots, \iota(U_{l+2})))$. Since $C(\iota(U_1), \dots, \iota(U_{l+2}))$ is convex, it is contractible and there exists a continuous extension of $j$ to $\sigma$ that maps $\sigma$ into $C(\iota(U_1), \dots, \iota(U_{l+2}))$. We extend $j$ to $\sigma$ in this way and, additionally, extend $j$ to $h\sigma$ for every $h \in H$ by the formula $j(hx) := hj(x)$ for every $x \in \sigma$.

    We need to show that this extension of $j$ is well-defined. There are three concerns:
    \begin{enumerate}
        \item $j$ could have been defined on $h\sigma$ already.
    
        \item The formula above provides a definition for $j$ on $\partial (h\sigma)$, where it was already defined.

        \item If $\sigma$ and $h\sigma$ share some face, there may be conflicting definitions occurring.
    \end{enumerate}
    The first concern is taken care of since we assume by induction that, whenever $j$ is defined on $\sigma$, it is defined on $h\sigma$ for all $h \in H$. The second concern is adressed by the fact that $j$ is $H$-equivariant on the $l$-skeleton by the induction assumption, which means that the new definition of $j$ on $\partial h\sigma$ coincides with the old one. Finally, the last concern is alleviated by the fact that $\sigma$ and $h\sigma$ cannot share a face by our assumption that $\mathcal{U}$ is $H$-fine. 
    Thus, $j$ extends continuously to $\sigma$ and all its translates.

    Since $N(H\mathcal{U})$ is a cocompact simplicial complex, this procedure allows us to define in finitely many steps a continuous map $j: N(H\mathcal{U}) \rightarrow C_{N(H\mathcal{U}))}(\iota)$ that is $H$-equivariant.\\
    
    We are left to show that $j$ is a $\mathcal{U}$-push-off. We first show that the image of $j$ is contained in $Y_{\epsilon}$. By construction, every simplex $\sigma$ in $S_n$ satisfies
    \[ j(\sigma^{(0)}) \subset C(\iota_n( \sigma^{(0)} )). \]
    Since $\iota_n$ is $(\delta, \delta')$-tight, we can continue this inclusion with
    \[ C(\iota_n( \sigma^{(0)} )) \subset N_{\delta'}( \iota_n( \sigma^{(0)} ) ). \]
    Since $\iota_n$ has push-off distance strictly greater than $\delta'$ by assumption, we conclude that $C( \iota_n( \sigma^{(0)} ) ) \cap C_{\epsilon} = \emptyset$ for every simplex in $S_n$, which proves the first property required of a $\mathcal{U}$-push-off.\\

    We are left to prove that $j$ satisfies for all $q \in \Sigma_{\epsilon}$ that $\angle_q( j \circ \Psi(q), C) > \frac{\pi}{2}$. Since this angle is invariant under the action of $H$, we can assume without loss of generality that $q \in \Sigma_{\epsilon} \cap K$. Let $U \in \mathcal{U}_{\Sigma}$ such that $q \in U$. Since $\iota$ is a $(\mathcal{U}, \alpha)$-push-out grid, there exists some $q_0 \in U$ such that $\angle_q(\iota(U), C) \geq \alpha$. Since $\iota$ is $(\delta, \delta')$-tight, we know that $d(q, q_0) \leq \delta$.

    We now estimate $d( j \circ \Psi(q), \iota(U) )$. By construction of $\Psi$ in Lemma \ref{lem:ProjectiontoNerve}, we know that $\Psi(q)$ lies in a simplex $\sigma$ that has $U$ as a vertex and, since $q \in \Sigma_{\epsilon}$, $\sigma$ is a simplex of $N(H\mathcal{U}_{\Sigma})$. Moving to the subdivision, $\Psi(q)$ lies in a simplex $\sigma_n$ of $S_n$ that lies in $\sigma$. Let $U_1, \dots, U_{k+1}$ be the vertices of $\sigma_n$. By construction of $j$, we know that $j \circ \Psi(q) \in C( \iota( U_1 ), \dots, \iota(U_{k+1} ) )$. By statements (1) and (2) of Lemma \ref{lem:Diametersinshrinkingsubdivisions}, we know that
    \begin{equation*}
        \begin{split}
            d( \iota(U), j \circ \Psi(q) ) & \leq d( \iota(U), \iota_n(U_1) ) + d(\iota_n(U_1), j \circ \Psi(q) )\\
            & \leq \sum_{i=0}^{n-1} \lambda^i \diam\left( \iota\left( \sigma^{(0)} \right) \right) + \lambda^n \diam\left( \iota\left( \sigma^{(0)} \right) \right)\\
            & \leq \frac{1}{1-\lambda} \diam \left( \iota \left( \sigma^{(0)} \right) \right).
        \end{split}
    \end{equation*}
    Since $\iota$ is $(\delta, (1-\lambda)\delta')$-tight on the boundary and all vertices of $\sigma$ lie in $H\mathcal{U}_{\Sigma}$, we know that $\frac{1}{1-\lambda}\diam( \iota( \sigma^{(0)} ) ) \leq \delta'$. We conclude that
    \[ d( \iota(U), j \circ \Psi(q) ) \leq \delta'. \]
    We see that $j$ sends the subcomplex $N( H \mathcal{U}_{\Sigma} ) \cap N( \adj( \mathcal{U} ) )$ into the $\delta'$-neighbourhood of $\iota( \adj(\mathcal{U} ) )$. Since $(\delta, \delta')$ are small relative to $K$, $\iota( \adj(\mathcal{U} ) )$, and $\alpha$, this implies that
    \[ \angle_q(j \circ \Psi(q), C) \geq \angle_{q_0}( \iota(U), C ) - \left( \frac{\alpha}{2} - \frac{\pi}{4} \right) \geq \frac{\alpha}{2} + \frac{\pi}{4} > \frac{\pi}{2}. \]
    We thus obtain the required inequality for all $q \in \Sigma_{\epsilon} \cap K$ and, by $H$-equivariance, for all $q \in \Sigma_{\epsilon}$. This proves that $j$ is a $\mathcal{U}$-push-off of $C_{\epsilon}$ through $\Sigma_{\epsilon}$, which proves the Lemma.
\end{proof}

We summarize the results of this section in the following theorem.

\begin{thm} \label{thm:Tightpushoffgridsimplycontractibility}
    Let $\epsilon > 0$, $X$ be a $\CAT$ space with an isometric action by a group $H$, $C \subset X$ be convex, $H$-invariant, and $H$-cocompact, $\Sigma_{\epsilon}$ be an $H$-invariant boundary component of $C_{\epsilon}$, and $Y_{\epsilon} \subset X \setminus C_{\epsilon}$ the connected component bounded by $\Sigma_{\epsilon}$.

    Let $K \subset C_{\epsilon}$ be a compact set such that $HK = C_{\epsilon}$, $K_{out} \subset Y_{\epsilon}$ a compact set, $\alpha > \frac{\pi}{2}$, $\mathcal{U}$ a finite cover of $K$ by open sets, and $(\delta, \delta')$ small relative to $K, K_{out}$, and $\alpha$. Then the following holds:

    \begin{itemize}
        \item If there exists a $(\delta, \delta')$-tight $(\mathcal{U}, \alpha)$-push-off grid $\iota$ such that $\iota( \mathcal{U} ) \subset K_{out}$, then $\Sigma_{\epsilon}$ is contractible.

        \item If there exists a $(\mathcal{U}, \alpha)$-push-out grid $\iota$ that is $(\delta, (1-\lambda)\delta')$-tight on the boundary, that satisfies $\iota( \mathcal{U} ) \subset K_{out}$, and that admits a $\lambda$-shrinking subdivision $(S_n, \iota_n)$ of order $n$ such that $\iota_n$ is $(\delta, \delta')$-tight and satisfies $\delta' < d_{push-off}(\iota_n)$, then $\Sigma_{\epsilon}$ is contractible.

    \end{itemize}

\end{thm}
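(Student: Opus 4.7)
The plan is to chain together the three main lemmas already proved in this section. Given a $(\mathcal{U},\alpha)$-push-off grid satisfying the hypotheses of either bullet, Lemma \ref{lem:extendingpushoffgridtopushoff} upgrades it to a $\mathcal{U}$-push-off of $C_\epsilon$ through $\Sigma_\epsilon$; Lemma \ref{lem:pushoffimpliesretraction} then turns this push-off into a continuous retraction $r \colon C_\epsilon \to \Sigma_\epsilon$; and since the $\epsilon$-neighbourhood of a convex set in a $\CAT$ space is itself convex, Lemma \ref{lem:retractionimpliescontractibility} identifies $\Sigma_\epsilon$ as a retract of a convex set and yields contractibility. Thus each bullet reduces to verifying the hypotheses of Lemma \ref{lem:extendingpushoffgridtopushoff}.

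For the second bullet this verification is essentially already in the hypotheses: the given $\lambda$-shrinking subdivision $(S_n, \iota_n)$ is $(\delta,\delta')$-tight with push-off distance strictly greater than $\delta'$, and its image lies in $Y_\epsilon$ (the image of $\iota$ itself lies in $Y_\epsilon$ by definition of a push-off grid, while the push-off-distance bound $\delta' < d_{push-off}(\iota_n)$ rules out any subdivided vertex lying in $C_\epsilon$); the $(\delta,(1-\lambda)\delta')$-tightness on the boundary of $\iota$ is exactly what the lemma requires; and smallness of $(\delta,\delta')$ relative to $K$, $\iota(\adj(\mathcal{U}))$ and $\alpha$ follows from smallness relative to $K$, $K_{out}$ and $\alpha$, because $\iota(\adj(\mathcal{U})) \subset HK_{out}$ and the three conditions of Definition \ref{def:smalldeltarelativetoR} are preserved under the isometric action of $H$.

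For the first bullet I would feed into Lemma \ref{lem:extendingpushoffgridtopushoff} the trivial order-zero subdivision $(S_0,\iota_0) = (N(H\mathcal{U}), \iota)$. The hypothesis $\iota(\mathcal{U}) \subset K_{out}$ combined with condition (2) of Definition \ref{def:smalldeltarelativetoR}, namely $N_{\delta'}(K_{out}) \cap C_\epsilon = \emptyset$, immediately gives $d_{push-off}(\iota) > \delta'$ by $H$-equivariance and isometry-invariance; the $(\delta,\delta')$-tightness of $\iota$ provides both the tightness of the subdivision and the boundary tightness required by the lemma, since the apparent factor $(1-\lambda)$ is harmless at $n=0$ (the telescoping bound in the proof of Lemma \ref{lem:extendingpushoffgridtopushoff} collapses to the single term $\diam(\iota(\sigma^{(0)})) \leq \delta'$); and the smallness condition transfers as in the second bullet.

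The main obstacle is purely bookkeeping: translating the slightly differently phrased input data of the theorem into the precise form demanded by Lemma \ref{lem:extendingpushoffgridtopushoff}, in particular verifying that smallness relative to $K_{out}$ implies smallness relative to $\iota(\adj(\mathcal{U}))$, and that the $(1-\lambda)$ factor in the extension lemma's boundary-tightness hypothesis is inactive in the absence of a genuine subdivision. No new geometric content beyond the preceding lemmas is required.
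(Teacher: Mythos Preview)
Your proposal is correct and matches the paper's approach exactly: the theorem is presented there as a summary of the preceding results (Lemmas \ref{lem:pushoffimpliesretraction}, \ref{lem:retractionimpliescontractibility}, and \ref{lem:extendingpushoffgridtopushoff}), with no separate proof given. Your handling of the bookkeeping---transferring smallness from $K_{out}$ to $\iota(\adj(\mathcal{U}))$ via $H$-equivariance and $\iota(\mathcal{U})\subset K_{out}$, and observing that the $(1-\lambda)$ factor is inactive for the trivial order-zero subdivision---is precisely the verification the paper leaves implicit.
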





\subsection{Shrinking subdivisions and $\lambda$-barycenters} \label{subsec:ShrinkingSubdivisions}

The results from the previous section lead us to two questions: How do we construct a push-off grid that is tight on the boundary and when can we make such a puch-off grid tight overall through the use of $\lambda$-shrinking subdivisions? In this section, we address the second question by providing a sufficient condition for the existence of $\lambda$-shrinking subdivisions. We start with the following definition.

\begin{mydef} \label{def:havingshrinkingsubdivisions}
    Let $(Z,d)$ be a metric space, $\lambda \in [\frac{1}{2}, 1)$, and $\Delta > 0$. We say that {\it $Z$ admits $\lambda$-shrinking subdivisions up to diameter $\Delta$} if for every finite simplicial complex $S$, every map $\iota : S^{(0)} \rightarrow Z$ with $\diam(\iota) \leq \Delta$, and every simplex $\sigma$ in $S$, we have the following property:
    
    Every $\lambda$-shrinking subdivision $(S', \iota')$ of $\partial \sigma$ that satisfies $\diam\left(\iota\left( S'^{(0)} \right) \right) \leq \diam\left( \iota\left( \sigma^{(0)} \right) \right)$ extends to a $\lambda$-shrinking subdivision $\sigma_{sub}$ of $\sigma$ with the property that for every simplex $\tilde{\sigma}$ in $S$, we have that
    \[ \diam\left( \iota \left( \tilde{\sigma}_{sub}^{(0)} \right) \right) \leq \diam \left( \iota \left( \tilde{\sigma}^{(0)} \right) \right), \]
    where $\tilde{\sigma}_{sub}^{(0)} = \tilde{\sigma}^{(0)} \cup ( \tilde{\sigma} \cap \sigma_{sub}^{(0)} )$ denotes the set of all vertices of $\tilde{\sigma}$ together with all vertices of $\sigma_{sub}$ that are contained in the (closed) simplex $\tilde{\sigma}$.

\end{mydef}

Given a finite or cocompact simplicial complex, the ability to subdivide any simplex allows us to subdivide all of them in finitely many steps. We have the following Lemma.

\begin{lem} \label{lem:admittingimpliesconstructionofshrinkingsubdivisions}
    Let $S$ be a locally finite simplicial complex and $(Z,d)$ a metric space, both with an isometric (simplicial) action by a group $H$, such that $S$ is $H$-cocompact. Suppose $Z$ admits $\lambda$-shrinking subdivisions up to diameter $\Delta$ and $\iota : S^{(0)} \rightarrow Z$ is an $H$-equivariant map with $\diam(\iota) \leq \Delta$. Then there exists a $\lambda$-shrinking subdivision $(S_{sub}, \iota_{sub})$ of $(S, \iota)$. 
\end{lem}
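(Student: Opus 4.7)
The plan is to build $(S_{sub}, \iota_{sub})$ by induction on the dimension of simplices in $S$, extending the subdivision orbit by orbit. The $0$-skeleton requires no refinement: set $\iota_{sub}|_{S^{(0)}} := \iota$. For the inductive step, suppose an $H$-equivariant $\lambda$-shrinking subdivision of the $k$-skeleton $S^{(k)}$ has already been constructed. Since $S$ is locally finite and $H$-cocompact, there are only finitely many $H$-orbits of $(k+1)$-simplices; pick one representative $\sigma$ per orbit. Its boundary $\partial \sigma$ sits inside $S^{(k)}$ and is therefore already carrying a $\lambda$-shrinking subdivision $(S'_\sigma, \iota'_\sigma)$. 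Property (2) of Definition \ref{def:shrinkingsubdivision}, applied at the previous stages of the induction, guarantees that
\[ \diam\!\left(\iota\!\left(S'^{(0)}_\sigma\right)\right) \leq \diam\!\left(\iota\!\left(\sigma^{(0)}\right)\right) \leq \Delta. \]
This is precisely the hypothesis required to invoke Definition \ref{def:havingshrinkingsubdivisions} with the finite subcomplex $\sigma$ playing the role of the simplicial complex in that definition. The definition then produces an extension $\sigma_{sub}$ of $(S'_\sigma, \iota'_\sigma)$ to a $\lambda$-shrinking subdivision of $\sigma$ satisfying both diameter conditions of Definition \ref{def:shrinkingsubdivision} on $\sigma$ and all of its faces.

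I would then propagate $\sigma_{sub}$ across the orbit $H\sigma$ using the action of $H$: for each $h \in H$, declare the subdivision of $h\sigma$ to be $h \cdot \sigma_{sub}$ and extend $\iota_{sub}$ by $\iota_{sub}(h v) := h \cdot \iota_{sub}(v)$ on the newly introduced interior vertices. Since $H$ acts by isometries on $Z$ and simplicially on $S$, all the shrinking conditions are preserved under this transport. Running over all representatives at dimension $k+1$ and taking the union with the subdivision of $S^{(k)}$ completes the inductive step; iterating through all dimensions yields the desired $H$-equivariant $\lambda$-shrinking subdivision $(S_{sub}, \iota_{sub})$.

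The main obstacle is the well-definedness of the equivariant transport: whenever $h\sigma = h'\sigma$ one has $h^{-1}h' \in \mathrm{Stab}(\sigma)$, so one must ensure that $\sigma_{sub}$ is invariant under $\mathrm{Stab}(\sigma)$. In the intended application to the nerve $N(H\mathcal{U})$ of an $H$-fine cover (cf.\,Definition \ref{def:Hfinecover} and Remark \ref{rem:deltatightproperties}), no two distinct vertices of any simplex lie in the same $H$-orbit, so simplex stabilizers act trivially on the simplices they fix and invariance is automatic. In general, one secures invariance by choosing $\sigma_{sub}$ in a $\mathrm{Stab}(\sigma)$-equivariant way, which is possible because the boundary subdivision is already $\mathrm{Stab}(\sigma)$-invariant by the induction hypothesis and $\mathrm{Stab}(\sigma)$ acts on $\sigma$ by simplicial isometries. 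Once this point is handled, the two conditions of Definition \ref{def:shrinkingsubdivision} pass from the local extension on each orbit representative to the global subdivision by $H$-equivariance.
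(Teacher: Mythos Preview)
Your approach is essentially that of the paper: induction on skeleta, extending the subdivision one $H$-orbit of simplices at a time via Definition \ref{def:havingshrinkingsubdivisions}, then propagating $H$-equivariantly. However, there is a gap in your induction hypothesis.

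You assume only that a $\lambda$-shrinking subdivision of $S^{(k)}$ has been constructed, so property (2) of Definition \ref{def:shrinkingsubdivision} is only available for simplices of dimension $\leq k$. From this alone you cannot conclude $\diam\bigl(\iota(S'^{(0)}_\sigma)\bigr) \leq \diam\bigl(\iota(\sigma^{(0)})\bigr)$ for a $(k+1)$-simplex $\sigma$: if $w$ lies in the subdivision of one $k$-face $\tau_1 \subset \partial\sigma$ and $w'$ in that of another $k$-face $\tau_2$, property (2) for $\tau_1$ and $\tau_2$ separately bounds $d(\iota(w),\iota(v))$ for vertices $v$ of $\tau_i$, but yields no direct bound on $d(\iota(w),\iota(w'))$; the triangle inequality only gives $2\,\diam\bigl(\iota(\sigma^{(0)})\bigr)$, which is not enough to feed back into Definition \ref{def:havingshrinkingsubdivisions}.

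The fix, which is what the paper does, is to carry a stronger induction hypothesis: for \emph{every} simplex $\tau$ in the full complex $S$ (not just those in $S^{(k)}$) one has $\diam\bigl(\iota'(\tau^{(0)}_{sub})\bigr) \leq \diam\bigl(\iota(\tau^{(0)})\bigr)$, where $\tau^{(0)}_{sub}$ denotes all subdivision vertices constructed so far that lie in the closed simplex $\tau$. Applied to the $(k+1)$-simplex $\sigma$ itself (whose interior is as yet unsubdivided, so $\sigma^{(0)}_{sub} = (\partial\sigma)^{(0)}_{sub}$), this gives exactly the bound you need. To maintain the strengthened hypothesis through the induction step, you must invoke Definition \ref{def:havingshrinkingsubdivisions} not with the bare simplex $\sigma$ as the finite complex, but with a finite subcomplex of $S$ containing all simplices that meet $\sigma$ (for instance $Room(\sigma)$, finite by local finiteness); the last clause of that definition then controls $\diam\bigl(\iota'(\tilde\sigma^{(0)}_{sub})\bigr)$ for all such $\tilde\sigma$, propagating the hypothesis. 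Your discussion of simplex stabilisers is correct and in fact more careful than the paper's treatment of that point.
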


\begin{proof}
    Given $(S, \iota)$ as above, we need to construct a $\lambda$-shrinking subdivision. We do so by constructing the subdivision on the $l$-skeleton of $S$ and use induction over $l$. Our induction assumption is the following:

    \begin{IndAssum*}
        Let $l \geq 0$ and let $S' \subset S$ be an $H$-invariant subcomplex of $S$ that contains $S^{(l)}$ and has dimension $\leq l+1$. Let $S'_{sub}$ be a locally finite subdivision of $S'$ and $\iota' : S'^{(0)}_{sub} \rightarrow Z$ an extension of $\iota$ with the following two properties:

    \begin{itemize}

        \item If $\sigma'$ is a simplex in $S'_{sub}$ and $\sigma$ the least-dimensional simplex in $S'$ containing $\sigma'$, then
        \[ \diam\left( \iota' \left( \sigma'^{(0)} \right) \right) \leq \lambda \diam\left( \iota \left( \sigma^{(0)} \right) \right). \]

        \item For every simplex $\sigma$ in $S$, we have
        \[ \diam\left( \iota' \left( \sigma_{sub}^{(0)} \right) \right) \leq \diam\left( \iota \left( \sigma^{(0)} \right) \right), \]
        where $\sigma_{sub}^{(0)} = \sigma \cap S'^{(0)}_{sub}$ is the set of all vertices in $S'_{sub}$ contained in the (closed) simplex $\sigma$.
    \end{itemize}
    \end{IndAssum*}

    Clearly, $\iota$ satisfies these conditions for $l = 0$, which provides us with the start of the induction. For the induction step, let $\sigma$ be an $l+1$-dimensional simplex in $S$, which is not contained in $S'$. The induction step is complete if we can extend the subdivision and $\iota'$ to $\sigma$. Combining the fact that $\diam(\iota) \leq \Delta$ with the induction assumption, we obtain that
    \[ \diam\left( \iota' \left( \sigma_{sub}^{(0)} \right) \right) \leq \diam\left( \iota \left( \sigma^{(0)} \right) \right) \leq \Delta. \]
    Note that the entire boundary $\partial \sigma$ is contained in $S'$, since $S'$ contains the $l$-skeleton of $S$ by assumption. Since $Z$ admits $\lambda$-shrinking subdivisions up to diameter $\Delta$, we conclude that the subdivision of $\partial \sigma$ induced by $S'_{sub}$ extends to a subdivision $\sigma_{sub}$ of $\sigma$ and that we can extend $\iota'$ to this subdivision such that the following two inequalities hold: For all simplices $\sigma'$ in $S'_{sub} \cup \sigma_{sub}$, we have
    \[ \diam \left( \iota' \left( \sigma'^{(0)} \right) \right) \leq \lambda \diam \left( \iota \left( \sigma_{least}^{(0)} \right) \right), \]
    where $\sigma_{least}$ is the least-dimensional simplex in $S' \cup \sigma$ containing $\sigma'$. Furthermore, for all simplices $\tilde{\sigma}$ in $S$, we have
    \[ \diam \left( \iota' \left( \tilde{\sigma}_{sub}^{(0)} \right) \right) \leq \diam \left( \iota \left( \tilde{\sigma}^{(0)} \right) \right). \]
    Using the $H$-action on $S$, the subdivision of $\sigma$ and extension of $\iota'$ can be extended $H$-equivariantly to every simplex in $S$ of the form $h\sigma$ for some $h \in H$. Since $H$ acts cocompactly on $S$, we can subdivide the entire $l+1$-skeleton of $S$ and extend $\iota'$ to this subdivision in finitely many steps. This completes the induction step.

    Through this induction, we obtain a locally finite subdivision $S_{sub}$ of $S$ and an extension $\iota' : S_{sub}^{(0)} \rightarrow Z$. The properties in the induction assumption imply that $(S_{sub}, \iota')$ is a $\lambda$-shrinking subdivision, which proves the Lemma.

\end{proof}

We now provide a sufficient condition for $Z$ to admit $\lambda$-shrinking subdivisions. This condition is based on the idea that there is one particular subdivision of a simplicial complex that is very natural to consider: the barycentric subdivision.

\begin{mydef} \label{def:lambdaBarycenters}
Let $Z$ be a metric space, $\lambda \in [\frac{1}{2}, 1)$, and $P \subset Z$ a finite set. We call a point $b \in Z$ a {\it $\lambda$-barycenter} of $P$ if
\[ \forall p \in P : d(b, p) \leq \lambda \cdot \diam(P). \]
If, additionally, $Q \subset Z$ is a finite subset, we call $b \in Z$ a {\it $\lambda$-barycenter of $P$ relative to $Q$} if it is a $\lambda$-barycenter of $P$ and, additionally,
\[ \forall q \in Q : d(b, q) \leq \diam( \{ q \} \cup P). \]

Let $\Delta > 0$. We say that {\it $Z$ has $\lambda$-barycenters up to diameter $\Delta$}, if for any two finite sets $P, Q \subset Z$ such that $\diam(P) \leq \Delta$ and $\diam(P \cup Q) \leq 2\Delta$, there exists a $\lambda$-barycenter of $P$ relative to $Q$.


\end{mydef}

\begin{lem} \label{lem:barycentersimplyshrinkingsubdivisions}
    Let $\lambda \in [\frac{1}{2}, 1)$, $\delta \geq 0$, $Z$ a metric space, and $S$ a locally finite simplicial complex, both with an isometric (simplicial) action by a group $H$ such that $S$ is $H$-cocompact. Suppose $\iota : S^{(0)} \rightarrow Z$ is an $H$-equivariant map and suppose $Z$ has $\lambda$-barycenters up to diameter $\diam(\iota)$. Then there exists a $\lambda$-shrinking subdivision $(S', \iota_{sub})$ of $(S, \iota)$, where $S'$ is the barycentric subdivision of $S$.
\end{lem}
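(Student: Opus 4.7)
The plan is to define $\iota_{sub}$ on the new vertices of $S' = \prescript{(1)}{}{S}$ by induction on $|J|$, processing one representative per $H$-orbit and extending equivariantly. For $|J| = 1$, set $\iota_{sub}(U_{\{i\}}) := \iota(U_i)$. For $|J| \geq 2$, I would choose $\iota_{sub}(U_J)$ to be a $\lambda$-barycenter of
\[ P_J := \{\iota_{sub}(U_{J'}) : J' \subsetneq J\} \]
relative to
\[ Q_J := \{\iota_{sub}(U_{J_1}) : J_1 \cup J \text{ spans a simplex of } S,\ J_1 \not\subseteq J,\ J_1 \text{ already processed}\}. \]
Equivariance comes for free because the sets $P_J$ and $Q_J$ transform naturally under $H$, and $H$ acts on $Z$ by isometries.

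To invoke the hypothesis that $Z$ has $\lambda$-barycenters up to diameter $\Delta := \diam(\iota)$, I would check $\diam(P_J) \leq \Delta$ and $\diam(P_J \cup Q_J) \leq 2\Delta$. The first is precisely the inductive form of condition (2) of Definition \ref{def:shrinkingsubdivision} applied to $\sigma_J$. For the second, every $\iota_{sub}(U_{J_1}) \in Q_J$ lies within $\diam(\iota(\sigma_{J_1 \cup J}^{(0)})) \leq \Delta$ of any vertex $\iota(U_j)$ with $j \in J$ (by the inductive condition (2) applied to $\sigma_{J_1 \cup \{j\}}$, a face of the genuine simplex $\sigma_{J_1 \cup J}$), and a triangle inequality through these shared vertices bounds the diameter of $P_J \cup Q_J$ by $2\Delta$.

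With the existence of $\iota_{sub}(U_J)$ secured, both conditions of a $\lambda$-shrinking subdivision follow by induction. Every simplex of $S'$ is a chain $U_{J_1} \subsetneq \cdots \subsetneq U_{J_k}$ lying in $\sigma_{J_k}$; for a comparable pair $J_l \subsetneq J_m$ in the chain, $\iota_{sub}(U_{J_l}) \in P_{J_m}$, so the $\lambda$-barycenter property yields $d(\iota_{sub}(U_{J_l}), \iota_{sub}(U_{J_m})) \leq \lambda \diam(P_{J_m}) \leq \lambda \diam(\iota(\sigma_{J_k}^{(0)}))$, which is the shrinking condition (1). Condition (2) is immediate for comparable pairs; for incomparable $J_1, J_2$ with $J_1 \cup J_2$ spanning a simplex, WLOG $J_1$ is processed before $J_2$, so $J_1 \in Q_{J_2}$ and the relative $\lambda$-barycenter property gives
\[ d(\iota_{sub}(U_{J_1}), \iota_{sub}(U_{J_2})) \leq \diam\bigl(\{\iota_{sub}(U_{J_1})\} \cup P_{J_2}\bigr), \]
which the inductive condition (2) applied to each $\sigma_{J_1 \cup J'}$ (with $J' \subsetneq J_2$) bounds by $\diam(\iota(\sigma_{J_1 \cup J_2}^{(0)}))$.

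The main obstacle is precisely this last step: tuning $Q_J$ so that the relative barycenter property delivers condition (2) for incomparable pairs without violating $\diam(P_J \cup Q_J) \leq 2\Delta$. Including too much breaks the $2\Delta$ bound, while including too little leaves distances between later-defined incomparable barycenters uncontrolled. Restricting $Q_J$ to images of index sets $J_1$ for which $J_1 \cup J$ is a genuine simplex of $S$ threads the needle, matching exactly the pairs for which condition (2) needs to be verified.
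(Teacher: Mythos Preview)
Your proposal is correct and follows the same strategy as the paper's proof: induct on the dimension $|J|$ and define $\iota_{sub}(U_J)$ as a $\lambda$-barycenter of the already-defined barycentric vertices in $\partial\sigma_J$ relative to those lying in $Room(\sigma_J)$. The one substantive difference is in your choice of $Q_J$. The paper restricts $Q_J$ to barycentric vertices $U_{J_1}$ with $|J_1|<|J|$ (those in $S'^{(0)}_l$), whereas you fix a total processing order and include in $Q_J$ every previously processed $U_{J_1}$ with $J_1\cup J$ a simplex, including those with $|J_1|=|J|$. This refinement is exactly what is needed to verify condition~(2) of Definition~\ref{def:shrinkingsubdivision} for incomparable pairs $U_{J_1},U_{J_2}$ of equal dimension: with only lower-dimensional vertices in $Q$, neither of two same-dimensional incomparable barycenters constrains the other, and the required bound $d(\iota_{sub}(U_{J_1}),\iota_{sub}(U_{J_2}))\le\diam(\iota(\sigma_{J_1\cup J_2}^{(0)}))$ does not follow from the construction. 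Your version closes this gap that the paper's written argument passes over.
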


\begin{proof}
    We recall that we defined the room $Room(\sigma)$ around the simplex $\sigma$ in a simplicial complex to be the closure of the union of all simplices in $S$ that contain $\sigma$ as a face. Note that for every simplex,
    \[ \diam\left( \iota\left( Room( \sigma)^{(0)} \right) \right) \leq 2 \diam(\iota) \]
    as any two simplices in $Room(\sigma)$ have all vertices of $\sigma$ in common.
    
    If $Z$ has $\lambda$-barycenters, then it satisfies a variation of the property to admit $\lambda$-shrinking subdivisions, which allows us to mimic the proof of Lemma \ref{lem:admittingimpliesconstructionofshrinkingsubdivisions}. Namely, we begin with $\iota$ being defined on the vertices of a simplicial complex $S$. We can extend $\iota$ to the first barycentric subdivision of the $1$-skeleton of $S$ by choosing for every edge $(v,w)$ in $S$ a $\lambda$-barycenter of the pair $(\iota(v), \iota(w))$ relative to the set $\{ \iota( u ) \vert u \in Room( (v,w) ) \}$. In particular, if $H$ acts on $S$ and $\iota$ is $H$-equivariant on the vertices of $H$, we can choose these $\lambda$-barycenters in an $H$-equivariant way and extend $\iota$ to an $H$-equivariant map on the barycentric subdivision of the $1$-skeleton of $S$. This is a $\lambda$-shrinking subdivision of the $1$-skeleton of $S$.
    
    Assume now by induction that we have extended $\iota$ to a $\lambda$-shrinking subdivision of the barycentric subdivision of the $l$-skeleton of $S$. Denote the barycentric subdivision of the $l$-skeleton by $S'_l$. For any $l+1$-dimensional simplex in $S$, we can choose ($H$-equivariantly) a $\lambda$-barycenter of $S'_l \cap \partial \sigma$ relative to all vertices in $S'^{(0)}_l \cap Room(\sigma)$. This yields an $H$-equivariant, $\lambda$-shrinking subdivision of the $l+1$-skeleton of $S$, where the subdivision of $S^{(l+1)}$ is its barycentric subdivison. Induction over $l$ provides us with a $\lambda$-shrinking subdivision $(S', \iota_{sub})$ of $(S, \iota)$ such that $S'$ is the barycentric subdivision of $S$.
\end{proof}

\begin{figure} 
\begin{tikzpicture}[scale=1.25]

\draw [] (-5,2) -- (-5,-2);
\draw [] (-7, 0) -- (-5,2);
\draw [] (-7, 0) -- (-5,-2);
\draw [] (-5,2) -- (-3,0);
\draw [] (-5,-2) -- (-3, 0);
\draw [] (-7, 0) -- (-5.5,0);
\draw [] (-4.8,0) -- (-3, 0);
\draw [fill] (-5,2) circle [radius = 0.025cm];
\draw [fill] (-5,-2) circle [radius = 0.025cm];
\draw [fill] (-3,0) circle [radius = 0.025cm];
\draw [fill] (-7,0) circle [radius = 0.025cm];
\node [left] at (-7,0) {$q_1$};
\node [above left] at (-6,1) {$q_2$};
\node [below right] at (-4,-1) {$p_5$};
\node [right] at (-3,0) {$p_3$};
\node [above] at (-5,2) {$p_1$};
\node [below] at (-5,-2) {$p_2$};
\draw [thick] (-6,1) node[cross] {};
\draw [thick] (-4,-1) node[cross] {};
\draw [thick] (-5,0) node[cross] {};
\node [left] at (-5,0) {$p_4$};
\draw [thick] (-4,1) node[cross] {};
\node [above right] at (-4,1) {$p_6$};
\draw [] (-4.5,0.5) circle [radius=0.05cm];
\node [below] at (-4.5,0.5) {$v_{P}$};
\draw [thick] (-6,-1) node[cross] {};
\node [below left] at (-6,-1) {$q_3$};
\draw [thick] (-6,0) node[cross] {};
\node [below] at (-6,0) {$q_4$};

\draw [] (0,2) -- (0,-2);
\draw [] (-2, 0) -- (0,2);
\draw [] (-2, 0) -- (0,-2);
\draw [] (0,2) -- (2,0);
\draw [] (0,-2) -- (2, 0);
\draw [dotted] (-2, 0) -- (-0.5,0);
\draw [dotted] (0.2,0) -- (2, 0);
\draw [fill] (0,2) circle [radius = 0.025cm];
\draw [fill] (0,-2) circle [radius = 0.025cm];
\draw [fill] (2,0) circle [radius = 0.025cm];
\draw [fill] (-2,0) circle [radius = 0.025cm];
\node [left] at (-2,0) {$q_1$};
\node [above left] at (-1,1) {$q_2$};
\node [below right] at (1,-1) {$p_5$};
\node [right] at (2,0) {$p_3$};
\node [above] at (0,2) {$p_1$};
\node [below] at (0,-2) {$p_2$};
\draw [thick] (-1, -1) node[cross] {};
\node [below left] at (-1,-1) {$q_3$};
\draw [thick] (-1,0) node[cross] {};
\node [below] at (-1,0) {$q_4$};
\draw [thick] (0,0) node[cross] {};
\node [left] at (0,0) {$p_4$};
\draw [thick] (1,1) node[cross] {};
\node [above right] at (1,1) {$p_6$};
\draw [] (0.5,0.5) circle [radius=0.05cm];
\node [right] at (0.5, 0.56) {$\iota(v_P)$};
\draw [dashed] (-2,0) -- (0.5,0.5);
\draw [] (2,0) -- (0.5,0.5);
\draw [dashed] (-1,1) -- (0.5,0.5);
\draw [] (1,-1) -- (0.5,0.5);
\draw [] (0,2) -- (0.5,0.5);
\draw [] (0,-2) -- (0.5,0.5);
\draw [] (0,0) -- (0.5,0.5);
\draw [] (1,1) -- (0.5,0.5);
\draw [dashed] (-1, -1) -- (0.5,0.5);
\draw [dashed] (-1, 0) -- (0.5, 0.5);
\end{tikzpicture}
\caption{When extending $\iota$ to the barycentric subdivision, we need to define it for all barycenters, like $v_P$ in this figure (left), which is the barycenter of $p_1, p_2, p_3$. By induction, we assume $\iota$ has already been extended to the barycentric subdivision of the $1$-skeleton. We then define $\iota(v_P)$ to be a $\lambda$-barycenter of $p_1, \dots, p_6$ relative to $q_1, \dots q_4$.}
\label{fig:BarycenterExtension} 
\end{figure}
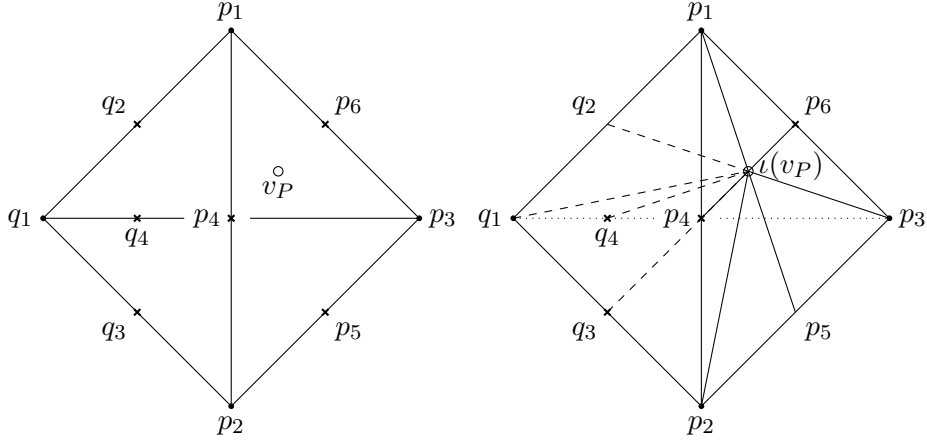

We need the following Lemma in order to control $\diam(\iota)$.

\begin{lem} \label{lem:diameterofiota}
    Let $K \subset C_{\epsilon}$ be a compact set such that $HK = C_{\epsilon}$, $K_{out} \subset Y_{\epsilon}$ a compact set, $Z := HK_{out}$, and $(\delta, \delta')$ small relative to $K$, $K_{out}$, and some $\alpha > \frac{\pi}{2}$.
    
    If $\mathcal{U}$ is a finite cover of $K$ by open sets and $\iota : N(H\mathcal{U})^{(0)} \rightarrow Z$ is an $H$-equivariant map satisfying $\iota(\mathcal{U}) \subset K_{out}$, then $\diam(\iota) \leq \diam_K(K_{out})$.
\end{lem}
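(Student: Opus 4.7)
The plan is to reduce the bound to a short combinatorial calculation on edges of the nerve. First I would observe that the diameter of a finite subset of a metric space is always realized by a pair of points, so in the definition
\[ \diam(\iota) = \max\{ \diam(\iota(\sigma^{(0)})) \mid \sigma \text{ a simplex in } N(H\mathcal{U}) \} \]
one only needs to bound $d(\iota(V_1), \iota(V_2))$ for edges $(V_1, V_2)$ of $N(H\mathcal{U})$.

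Next I would exploit the $H$-equivariance of $\iota$ and the fact that $H$ acts by isometries to reduce to edges one of whose endpoints lies in $\mathcal{U}$. Any edge of $N(H\mathcal{U})$ has the form $(h_1 U_1, h_2 U_2)$ with $U_1, U_2 \in \mathcal{U}$ and $h_1 U_1 \cap h_2 U_2 \neq \emptyset$. Applying $h_1^{-1}$ yields an edge $(U_1, h_1^{-1} h_2 U_2)$ in $N(H\mathcal{U})$ whose first vertex lies in $\mathcal{U}$; since the two sets still intersect and $U_1 \in \mathcal{U}$, the second vertex lies in $\adj(\mathcal{U})$. Because $\iota$ is $H$-equivariant and $H$ acts by isometries, $d(\iota(h_1 U_1), \iota(h_2 U_2)) = d(\iota(U_1), \iota(h_1^{-1} h_2 U_2))$. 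Thus it suffices to prove the bound for edges $(U, hU')$ with $U, U' \in \mathcal{U}$, $h \in H$, and $U \cap hU' \neq \emptyset$.

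For such an edge, the hypothesis $\iota(\mathcal{U}) \subset K_{out}$ gives $\iota(U) \in K_{out}$, and $H$-equivariance gives $\iota(hU') = h\iota(U') \in h K_{out}$. Since $U, U' \subset K$, we have $\emptyset \neq U \cap hU' \subset K \cap hK$, so $hK \cap K \neq \emptyset$. By the definition of $\diam_K(K_{out})$ we conclude
\[ d(\iota(U), \iota(hU')) \leq \diam(K_{out} \cup hK_{out}) \leq \diam_K(K_{out}), \]
which finishes the proof. The argument is essentially bookkeeping; the only point worth flagging is the reduction to the adjacency nerve, which relies solely on unpacking the definition of $\adj(\mathcal{U})$ and on $H$ acting by isometries so that diameters are preserved by translation.
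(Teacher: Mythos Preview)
Your reduction to edges of the nerve and the use of $H$-equivariance to bring one endpoint into $\mathcal{U}$ are exactly right and match the paper. The gap is in the final step: you assert ``Since $U, U' \subset K$'', but this is false. The elements of $\mathcal{U}$ are open sets in $X$ that cover the compact set $K$; they are not required to lie inside $K$, and in fact cannot, since they must cover $\partial K$. So from $U \cap hU' \neq \emptyset$ you do \emph{not} immediately get $K \cap hK \neq \emptyset$.

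This is precisely where the hypothesis that $(\delta,\delta')$ is small relative to $K$, $K_{out}$, $\alpha$ enters---a hypothesis your argument never uses, which should have been a warning sign. The paper's proof fills the gap as follows: each $U \in \mathcal{U}$ meets $K$ (since $\mathcal{U}$ covers $K$) and has diameter $\leq \delta$ (an assumption implicit in the setting, made explicit in Definition~\ref{def:barycentersuptoH}). Pick $p \in U \cap K$ and $q \in hU' \cap hK$; since $U \cap hU' \neq \emptyset$ and both sets have diameter $\leq \delta$, one gets $d(p,q) \leq 2\delta$. Condition~(1) of Definition~\ref{def:smalldeltarelativetoR} then forces $hK \cap K \neq \emptyset$, after which your concluding inequality goes through unchanged.
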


\begin{proof}
    Let $U, U'$ be two adjacent vertices in $N(H\mathcal{U})$, i.e.\,$U \cap U' \neq \emptyset$. We can assume without loss of generality that $U \in \mathcal{U}$. If $U' \in \mathcal{U}$, then $\iota(U), \iota(U') \in K_{out}$ and $d(\iota(U), \iota(U')) \leq \diam(K_{out}) \leq \diam_K(K_{out})$. Otherwise, there exists $h \in H$ and $U'' \in \mathcal{U}$ such that $U' = hU''$ and thus $hU'' \cap U \neq \emptyset$. Since $\diam(U), \diam(U') \leq \delta$, this implies that there exist points $p \in K \cap U$ and $q \in hK \cap U'$ such that $d(p,q) \leq 2\delta$. Since $(\delta, \delta')$ are small relative to $K$, $K_{out}$, and $\pi$, this implies that $hK \cap K \neq \emptyset$ and thus
    \[ \diam_K(K_{out}) \geq \diam(hK_{out} \cup K_{out}) \geq d(\iota(U), \iota(U')). \]
\end{proof}

Let $Z \subset X$ be an $H$-invariant subset satisfying $HK_{out} \subset Z \subset Y_{\epsilon + \delta'}$. If $Z$ has $\lambda$-barycenters, we can now combine Lemma \ref{lem:barycentersimplyshrinkingsubdivisions} with Theorem \ref{thm:Tightpushoffgridsimplycontractibility} to obtain a sufficient condition for contractibility of $\Sigma_{\epsilon}$. We capture the sufficient condition in the following definition. 

\begin{mydef} \label{def:barycentersuptoH}
    Let $\epsilon > 0$, $X$ be a $\CAT$ space, $C \subset X$ a closed, convex, $H$-invariant and $H$-cocompact subset, $\Sigma_{\epsilon}$ an $H$-invariant boundary component of $C_{\epsilon}$ and $Y_{\epsilon} \subset X \setminus C_{\epsilon}$ the connected component bounded by $\Sigma_{\epsilon}$.

    Let $\lambda \in [\frac{1}{2},1)$. We say that $C_{\epsilon}$ has {\it $\lambda$-barycenters up to $H$}, if for some
    
    \begin{itemize}
        \item compact set $K \subset C_{\epsilon}$ such that $HK = C_{\epsilon}$,

        \item compact set $K_{out} \subset Y_{\epsilon}$,

        \item $\alpha > \frac{\pi}{2}$,

        \item pair $(\delta, \delta')$ that is small relative to $K$, $K_{out}$, and $\alpha$,

        \item finite cover $\mathcal{U}$ of $K$ by open sets with diameter at most $\delta$,

    \end{itemize}

    there exists

    \begin{enumerate}

        \item an $H$-invariant subset $Z \subset X$ that has $\lambda$-barycenters up to diameter $\diam_K(K_{out})$ and satisfies $HK_{out} \subset Z \subset Y_{\epsilon + \delta'}$,

        \item a $(\mathcal{U}, \alpha)$-push-out grid $\iota : H\mathcal{U} \rightarrow HK_{out}$ that is $(\delta, \delta')$-tight on the boundary and satisfies $\iota(\mathcal{U}) \subset K_{out}$.        
    \end{enumerate}
    
\end{mydef}

We recall that, based on the constructions of this section, the main challenge is to obtain $K_{out}$, $(\delta, \delta')$, $Z$, and $\iota$. The condition that $Z \subset Y_{\epsilon + \delta'}$ is required to make sure that $d_{push-off}(\iota_n) > \delta'$, which is needed in order to apply Theorem \ref{thm:Tightpushoffgridsimplycontractibility}. By Lemma \ref{lem:diameterofiota}, $\diam(\iota) \leq \diam_K(K_{out})$. We obtain the following result.

\begin{thm} \label{thm:lambdabarycentersimplycontractibility}
    Let $\epsilon > 0$, $X$ be a $\CAT$ space, $C \subset X$ be closed, convex, $H$-invariant, and $H$-cocompact, $\Sigma_{\epsilon}$ an $H$-invariant boundary component of $C_{\epsilon}$ and $Y_{\epsilon} \subset X \setminus C_{\epsilon}$ be the connected component bounded by $\Sigma_{\epsilon}$.

    If there exists some $\lambda \in [\frac{1}{2}, 1)$ such that $C_{\epsilon}$ has $\lambda$-barycenters up to $H$, then $\Sigma_{\epsilon}$ is contractible.



\end{thm}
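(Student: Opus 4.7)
The plan is to apply the second bullet of Theorem \ref{thm:Tightpushoffgridsimplycontractibility} by upgrading the push-off grid supplied by the hypothesis to a tight one via the barycentric-subdivision machinery of subsection \ref{subsec:ShrinkingSubdivisions}.

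First I would unpack Definition \ref{def:barycentersuptoH} to obtain the data $K$, $K_{out}$, $\alpha$, $(\delta, \delta')$, $\mathcal{U}$, the $H$-invariant set $Z \subset Y_{\epsilon + \delta'}$, and the $(\mathcal{U}, \alpha)$-push-off grid $\iota : N(H\mathcal{U})^{(0)} \rightarrow HK_{out}$ that is $(\delta,\delta')$-tight on the boundary and satisfies $\iota(\mathcal{U}) \subset K_{out}$. By Lemma \ref{lem:diameterofiota}, $\diam(\iota) \leq \diam_K(K_{out})$, so the $\lambda$-barycenter hypothesis on $Z$ is exactly what Lemma \ref{lem:barycentersimplyshrinkingsubdivisions} needs in order to extend $(N(H\mathcal{U}), \iota)$ to an $H$-equivariant $\lambda$-shrinking subdivision $(S_1, \iota_1)$, in which $S_1$ is the barycentric subdivision of $N(H\mathcal{U})$ and every new vertex is sent to a $\lambda$-barycenter chosen inside $Z$.

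Second I would iterate this construction. By Lemma \ref{lem:Diametersinshrinkingsubdivisions}(1), $\diam(\iota_i) \leq \lambda^i \diam(\iota) \leq \diam_K(K_{out})$ at every step, so the $\lambda$-barycenter hypothesis on $Z$ remains applicable and Lemma \ref{lem:barycentersimplyshrinkingsubdivisions} can be re-invoked. After $n$ applications I obtain a $\lambda$-shrinking subdivision $(S_n, \iota_n)$ of order $n$ with $\iota_n(S_n^{(0)}) \subset Z$ and $\diam(\iota_n) \leq \lambda^n \diam_K(K_{out})$. Choosing $n$ large enough makes $\iota_n$ as tight as required. Moreover, since $\iota_n(S_n^{(0)}) \subset Z \subset Y_{\epsilon + \delta'}$, every vertex $v$ satisfies $d(\iota_n(v), C_\epsilon) > \delta'$, so $d_{push-off}(\iota_n) > \delta'$.

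With $(S_n, \iota_n)$ in hand, all hypotheses of Theorem \ref{thm:Tightpushoffgridsimplycontractibility}(bullet 2) are met and contractibility of $\Sigma_\epsilon$ follows. The main technical point to handle carefully is the parameter calibration between definition and theorem: the theorem asks $\iota$ to be $(\delta, (1-\lambda)\delta')$-tight on the boundary, while the definition provides $(\delta, \delta')$-tightness on the boundary, and similarly couples the smallness of $(\delta,\delta')$ to the size of $d_{push-off}(\iota_n)$ and to the room around $C_\epsilon$ occupied by $Z$. I expect to resolve this by applying the theorem with a rescaled pair $(\delta, \delta'/(1-\lambda))$ and by shrinking the definition's $\delta'$ at the outset so that the smallness condition of Definition \ref{def:smalldeltarelativetoR} and the inclusion $Z \subset Y_{\epsilon + \delta'}$ are preserved after rescaling; monotonicity of all the relevant conditions in $\delta'$ makes this shrinking harmless.
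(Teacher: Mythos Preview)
Your approach is essentially identical to the paper's: the paper treats this theorem as an immediate consequence of combining Lemma~\ref{lem:barycentersimplyshrinkingsubdivisions} (iterated to obtain a subdivision of sufficiently high order) with Theorem~\ref{thm:Tightpushoffgridsimplycontractibility}, using Lemma~\ref{lem:diameterofiota} to feed the barycenter hypothesis, and the inclusion $Z\subset Y_{\epsilon+\delta'}$ to guarantee $d_{push\text{-}off}(\iota_n)>\delta'$. The parameter mismatch you flag between $(\delta,\delta')$-tightness on the boundary (in Definition~\ref{def:barycentersuptoH}) and the $(\delta,(1-\lambda)\delta')$-tightness required by Theorem~\ref{thm:Tightpushoffgridsimplycontractibility} is real and is left implicit in the paper as well; your proposed resolution by rescaling is the natural one, though note that ``small relative to $K,K_{out},\alpha$'' is monotone in the \emph{decreasing} direction in $\delta'$, so the clean fix is to read Definition~\ref{def:barycentersuptoH} as supplying $(\delta,(1-\lambda)\delta')$-tightness on the boundary together with $(\delta,\delta')$ small and $Z\subset Y_{\epsilon+\delta'}$, rather than trying to enlarge $\delta'$ after the fact.
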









\section{Applications to limit sets} \label{sec:applications}

\subsection{Constructing push-off grids in manifolds} \label{sec:constructingpushoffgridsinmanifolds}

In this section, we address the other assumption of Theorem \ref{thm:Tightpushoffgridsimplycontractibility}: the construction of a push-off grid that is $(\delta, \delta')$-tight on the boundary for small $\delta, \delta'$. We do so under some extra assumptions that will provide us with results regarding our initial motivating questions.

Suppose $X$ is a simply connected, geodesically complete Riemannian manifold with sectional curvature in $[-b^2, -1]$. Let $H$ be a group acting properly, freely, and by isometries on $X$ and $C \subset X$ be an $H$-invariant, $H$-cocompact convex subset. For every $\epsilon > 0$, the set $C_{\epsilon}$ is 
$H$-invariant and, by \cite{Walter76}, the topological boundary of $C_{\epsilon}$ is a $C^{1,1}$-manifold. Let $\Sigma_{\epsilon}$ be a connected component of $\partial C_{\epsilon}$ that is preserved by $H$ and $Y_{\epsilon} \subset X \setminus C_{\epsilon}$ the connected component bounded by $\Sigma_{\epsilon}$. (We note that there may be no $H$-invariant component. If $X \setminus C$ has only finitely many connected components, then we can replace $H$ by a finite index subgroup, which still acts cocompactly on $C$ and preserves all boundary components.)

For any $\epsilon > 0$, we can consider the differentiable function $x \mapsto d(x, C_{\epsilon})$. The gradient of this function is a continuous, $H$-invariant vector field, which we denote by $N$. This vector field is normal to $\Sigma_{R}$, pointing out of $C_R$ for every $R > \epsilon$. Furthermore, if $R > \epsilon > \epsilon'$, then the vector fields induced by $\epsilon$ and $\epsilon'$ coincide on $\Sigma_R$. We can thus let $\epsilon$ go to zero and obtain a continuous, $H$-invariant vector field $N$ on $Y_{\epsilon}$ that is normal to $\Sigma_R$ for all $R > 0$.

We denote the flow along $N$ for time $t$ by $\Phi_N^t : Y_{\epsilon} \rightarrow Y_{\epsilon}$. This is a $C^1$-diffeomorphism with the property that $\Phi_N^t( \Sigma_{\epsilon} ) = \Sigma_{\epsilon + t}$. Since $H$ acts cocompactly on $\Sigma_{\epsilon}$ and $\Sigma_R$, the restriction of $\Phi_N$ to $\Sigma_{\epsilon}$ is uniformly continuous. In particular, for every $\delta' > 0$, there exists $\delta > 0$, depending on $\delta', \epsilon$, and $R$, such that if $p, q \in \Sigma_{\epsilon}$ satisfy $d( p, q ) \leq 2\delta$, then $d( \Phi_N^{R-\epsilon}(p), \Phi_N^{R-\epsilon}(q) ) \leq \delta'$.\\

\begin{lem} \label{lem:appl:constructingaboundarytightpushoffgrid}
    Let $R > \epsilon > 0$. For all $\delta' > 0$ sufficiently small, there exists $\delta > 0$ and an $H$-invariant cover $H\mathcal{U}$ of $C_{\epsilon}$ so that there exists a $(\mathcal{U}, \pi)$-push-off grid $\iota : N( H\mathcal{U} )^{(0)} \rightarrow \Sigma_R$ that is $(\delta, \delta')$-tight on the boundary.
\end{lem}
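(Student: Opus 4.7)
The plan is to build $\iota$ by composing an equivariant choice of base points $q_U \in U \cap \Sigma_{\epsilon}$ with the normal flow $\Phi_N^{R-\epsilon}$, landing everything on $\Sigma_R \subset Y_{\epsilon}$. The key geometric observation is that the integral curves of $N$ are geodesics leaving $\Sigma_{\epsilon}$ perpendicularly in the direction \emph{away} from $C$. Hence for any $q \in \Sigma_{\epsilon}$, the geodesic from $q$ to $\Phi_N^{R-\epsilon}(q)$ starts in the direction $N(q)$, which is exactly opposite to the direction of the geodesic from $q$ to $\pi_C(q)$, yielding $\angle_q(\Phi_N^{R-\epsilon}(q), C) = \pi$. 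The angle condition in the definition of a $(\mathcal{U}, \pi)$-push-off grid therefore comes for free, which is why the Lemma aims at $\alpha = \pi$.

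The first real step is to set up a uniform-continuity estimate for the flow on $\Sigma_{\epsilon}$. Because $H$ acts cocompactly by isometries on $\Sigma_{\epsilon}$ and $\Phi_N$ is $H$-equivariant (being the gradient flow of the $H$-invariant function $d(\cdot, C)$), the restriction $\Phi_N^{R-\epsilon}\vert_{\Sigma_{\epsilon}}:\Sigma_{\epsilon}\to\Sigma_R$ is uniformly continuous (check it on a compact fundamental domain and extend by equivariance). For the given $\delta'$, I choose $\delta_0 > 0$ so that $d(p,q)\leq 2\delta_0$ on $\Sigma_{\epsilon}$ implies $d(\Phi_N^{R-\epsilon}(p), \Phi_N^{R-\epsilon}(q))\leq \delta'$, and then shrink $\delta \leq \delta_0$ further so that any cover by sets of diameter $\leq \delta$ is automatically $H$-fine (Remark \ref{rem:deltatightproperties}).

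Next, pick a compact set $K\subset C_{\epsilon}$ with $HK=C_{\epsilon}$ and a finite open cover $\mathcal{U}$ of $K$ by sets of diameter $\leq \delta$; the $H$-translates give an $H$-invariant, $H$-fine cover $H\mathcal{U}$ of $C_{\epsilon}$. For each $H$-orbit of vertices in $H\mathcal{U}_{\Sigma}$, choose a representative $U$ together with some $q_U\in U\cap \Sigma_{\epsilon}$, and extend $H$-equivariantly by $q_{hU}:=hq_U$. Define $\iota(U):=\Phi_N^{R-\epsilon}(q_U)\in\Sigma_R$, which is $H$-equivariant because $\Phi_N$ commutes with $H$. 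For $U\in H\mathcal{U}\setminus H\mathcal{U}_{\Sigma}$, extend $\iota$ equivariantly into $\Sigma_R$ in any convenient way—these vertices contribute to neither the angle condition nor the boundary-tightness condition.

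The verification is short. The angle condition holds with $\alpha=\pi$ by the perpendicularity noted above, condition (1) of $(\delta,\delta')$-tightness is immediate from the construction of $\mathcal{U}$, and for the boundary form of condition (2), if $U_1,\dots,U_{k+1}$ span a simplex of $N(H\mathcal{U}_{\Sigma})$, then $\bigcap_i U_i\neq\emptyset$; picking any $p$ in this intersection gives $d(q_{U_i},p)\leq \diam(U_i)\leq \delta$, hence $d(q_{U_i},q_{U_j})\leq 2\delta$ for all $i,j$, and the uniform-continuity estimate yields $d(\iota(U_i),\iota(U_j))\leq \delta'$. I do not anticipate a serious obstacle; the one delicate point is the uniform continuity of $\Phi_N^{R-\epsilon}$ on $\Sigma_{\epsilon}$, which rests on the $C^{1,1}$-regularity of $\partial C_{\epsilon}$ from \cite{Walter76} combined with $H$-cocompactness.
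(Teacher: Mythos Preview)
Your proposal is correct and follows essentially the same approach as the paper: both arguments use the normal flow $\Phi_N^{R-\epsilon}$ to define $\iota$ on $\mathcal{U}_{\Sigma}$, obtain $\angle_q(\iota(U),C)=\pi$ from the fact that the flow lines are geodesics opposite to the direction of $\pi_C(q)$, and deduce $(\delta,\delta')$-tightness on the boundary from uniform continuity of $\Phi_N^{R-\epsilon}\vert_{\Sigma_{\epsilon}}$ (via $H$-cocompactness) together with the $2\delta$ bound on distances between chosen base points in intersecting cover elements. The only cosmetic difference is that the paper additionally arranges $(\delta,\delta')$ to be small relative to $K$, $K_{out}:=\Phi_N^{R-\epsilon}(K\cap\Sigma_{\epsilon})$, and $\pi$, which is not required by the lemma as stated but is used downstream.
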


\begin{proof}
    Fix $R > \epsilon > 0$. Let $K \subset C_{\epsilon}$ be a compact set such that $HK = C_{\epsilon}$. Put
    \[ K_{out} := \Phi_N^{R-\epsilon} ( K \cap \Sigma_{\epsilon}). \]
    Let $\delta' > 0$ be sufficiently small so that there exists $\delta > 0$ such that $(\delta, \delta')$ are small relative to $K$, $K_{out}$, and $\pi$. Shrinking $\delta$ if necessary, we can make it so that for all $p, q \in \Sigma_{\epsilon}$ that satisfy $d( p, q ) \leq 2\delta$, we have $d( \Phi_N^{R-\epsilon}(p), \Phi_N^{R-\epsilon}(q) ) \leq \delta'$. Furthermore, we can shrink $\delta$ even further if necessary, so that every cover of $C_{\epsilon}$ by sets of diameter $\leq \delta$, is $H$-fine. We will construct a suitable push-off grid for all $\delta, \delta'$ with these properties.

    Let $\mathcal{U}$ be any finite cover of $K$ by open sets in $X$ that have diameter $\leq \delta$. (We assume that every element in $\mathcal{U}$ has non-empty intersection with $K$.) We conclude that $H\mathcal{U}$ is a locally finite, $H$-invariant, $H$-fine cover of $C_{\epsilon}$. We define
    \[ \mathcal{U}_{\Sigma} := \{ U \in \mathcal{U} \vert U \cap \Sigma_{\epsilon} \neq \emptyset \}. \]
    We now define $\iota : N( H\mathcal{U} )^{(0)} \rightarrow K_{out}$ as follows: Let $U \in \mathcal{U}$. If $U \notin \mathcal{U}_{\Sigma}$, then we choose some point $q' \in K_{out}$ and put $\iota(U) := q'$. If $U \in \mathcal{U}_{\Sigma}$, then there exists some $q \in U \cap \Sigma_{\epsilon}$. We choose one such $q$ and define $\iota(U) := \Phi_N^{R-\epsilon}(q)$. We can extend this map $H$-equivariantly to $H\mathcal{U}$. Since $H\mathcal{U}$ is $H$-fine this extension is well-defined and we obtain an $H$-equivariant map $\iota : H\mathcal{U} \rightarrow \Sigma_R$ with the property that $\iota( \mathcal{U} ) \subset K_{out}$.\\

    We claim that $\iota$ is a $(\mathcal{U}, \pi)$-push-off grid that is $(\delta, \delta')$-tight on the boundary. We first show that it is a push-off grid. For this, we have to show that for all $U \in \mathcal{U}_{\Sigma}$, there exists some $q \in U$ such that
    \[ \angle_q(\iota(U), C) \geq \pi. \]
    By construction, there exists some $q \in U \cap \Sigma_{\epsilon}$ such that $\iota(U) = \Phi_N^{R-\epsilon}(q)$. This $q$ satisfies
    \[ \angle_q(\iota(U), C) = \angle_q( \Phi_N^{R-\epsilon}(q), C) = \pi. \]
    Thus, $\iota$ is a $(\mathcal{U}, \pi)$-push-off grid.

    We are left to show that $\iota$ is $(\delta, \delta')$-tight on the boundary. We know that every element of $H\mathcal{U}$ has diameter $\leq \delta$ since this holds for all elements of $\mathcal{U}$ by assumption and $H$ acts by isometries. To estimate the diameter of $\iota$ on the subcomplex $N(H\mathcal{U}_{\Sigma})$, we consider two elements $U, U' \in H\mathcal{U}_{\Sigma}$ that are connected by an edge. Let $q \in U, q' \in U'$ such that $\iota(U) = \Phi_N^{R-\epsilon}(q)$ and $\iota(U') = \Phi_N^{R-\epsilon}(q')$. Since $U$ and $U'$ are connected by an edge in $N(H\mathcal{U})$, we know that $U \cap U' \neq \emptyset$ and thus, $d(q, q') \leq 2\delta$. We estimate
    \[ d( \iota(U), \iota(U') ) = d( \Phi_N^{R-\epsilon}(q), \Phi_N^{R-\epsilon}(q') ) \leq \delta', \]
    as $\delta, \delta'$ were chosen to satisfy the uniform continuity-property. We conclude that $\iota$ is $(\delta, \delta')$-tight on the boundary.
    
\end{proof}



Lemma \ref{lem:appl:constructingaboundarytightpushoffgrid} tells us that we can reliably construct push-off grids that are $(\delta, \delta')$-tight on the boundary for arbitrarily small $\delta, \delta'$. We conclude that, for manifolds, contractibility of $\Sigma_{\epsilon}$ is only a matter of the existence of barycenters in $Y_{\epsilon}$. To state the result, we briefly recall our notation.

\begin{notation}
    Let $X$ be a Cartan-Hadamard manifold with sectional curvature in $[-b^2, -1]$, $H$ a group acting properly, freely, and by isometries on $H$, $C \subset X$ a closed, $H$-invariant, $H$-cocompact convex subset, $\epsilon > 0$, $\Sigma_{\epsilon}$ an $H$-invariant connected component of $\partial C_{\epsilon}$ and $Y_{\epsilon}$ the connected component of $X \setminus C_{\epsilon}$ bounding $\Sigma_{\epsilon}$.

    Let $K \subset C_{\epsilon}$ be a compact set such that $HK = C_{\epsilon}$ and $K_{out} := \Phi_N^{R-\epsilon}( K \cap \Sigma_{\epsilon} )$ for every $R > \epsilon$.
\end{notation}

\begin{thm} \label{thm:appl:barycentersimplycontractibility}
    If there exist $\lambda \in [\frac{1}{2},1)$, $R > \epsilon$, and an $H$-invariant set $\Sigma_R \subset Z \subset Y_{\epsilon + \delta'}$ such that $Z$ has $\lambda$-barycenters up to diameter $\diam_K(K_{out})$, then $\Sigma_{\epsilon}$ is contractible.
\end{thm}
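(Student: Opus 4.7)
The plan is to assemble the pieces already built: Lemma \ref{lem:appl:constructingaboundarytightpushoffgrid} produces a boundary-tight push-off grid in the Cartan--Hadamard setting, and Theorem \ref{thm:lambdabarycentersimplycontractibility} converts the hypothesis ``$C_{\epsilon}$ has $\lambda$-barycenters up to $H$'' into contractibility of $\Sigma_{\epsilon}$. So the proof is essentially a verification that the data provided by the hypothesis, together with the output of the lemma, satisfies every clause of Definition \ref{def:barycentersuptoH}.

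I would start by fixing the data supplied by the hypothesis: $\lambda \in [\tfrac{1}{2},1)$, $R > \epsilon$, $\delta' > 0$, and the $H$-invariant set $Z$ with $\Sigma_R \subset Z \subset Y_{\epsilon + \delta'}$ and $\lambda$-barycenters up to diameter $\diam_K(K_{out})$. I set $\alpha := \pi$. If $\delta'$ is not already small enough for Lemma \ref{lem:appl:constructingaboundarytightpushoffgrid} to apply, I shrink it; this is harmless, since $Y_{\epsilon + \delta'}$ is monotone increasing as $\delta'$ decreases (so the inclusion $Z \subset Y_{\epsilon + \delta'}$ persists), the $\lambda$-barycenter hypothesis on $Z$ does not depend on $\delta'$, and $\Sigma_R \subset Z$ is unaffected.

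Next I would apply Lemma \ref{lem:appl:constructingaboundarytightpushoffgrid} (possibly shrinking $\delta'$ once more so that a $\delta > 0$ exists making $(\delta,\delta')$ small relative to $K$, $K_{out}$, and $\alpha$) to obtain a finite $H$-fine cover $\mathcal{U}$ of $K$ by open sets of diameter $\leq \delta$ and a $(\mathcal{U},\pi)$-push-off grid $\iota : N(H\mathcal{U})^{(0)} \to \Sigma_R$ that is $(\delta,\delta')$-tight on the boundary. The construction in the proof of that lemma places $\iota(\mathcal{U}) \subset K_{out}$, hence by $H$-equivariance $\iota(H\mathcal{U}) \subset HK_{out}$. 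At this point I would check each bullet of Definition \ref{def:barycentersuptoH}: the compact sets are $K$ (with $HK = C_{\epsilon}$) and $K_{out} = \Phi_N^{R-\epsilon}(K \cap \Sigma_{\epsilon})$, which is compact as the image of a compact set under the continuous flow map; the angle is $\alpha = \pi > \pi/2$; the pair $(\delta,\delta')$ is small relative to $(K, K_{out}, \alpha)$ by construction; the cover $\mathcal{U}$ has the required diameter bound; the inclusion $HK_{out} \subset Z$ follows from $K_{out} \subset \Sigma_R \subset Z$ together with $H$-invariance of $Z$, and $Z \subset Y_{\epsilon+\delta'}$ is part of the hypothesis; the $\lambda$-barycenter condition on $Z$ up to diameter $\diam_K(K_{out})$ is the remaining hypothesis; and $\iota$ is the required $(\mathcal{U},\alpha)$-push-off grid, $(\delta,\delta')$-tight on the boundary and valued in $HK_{out}$ with $\iota(\mathcal{U}) \subset K_{out}$. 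Therefore $C_{\epsilon}$ has $\lambda$-barycenters up to $H$, and Theorem \ref{thm:lambdabarycentersimplycontractibility} gives contractibility of $\Sigma_{\epsilon}$.

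There is no genuine mathematical obstacle here, since all nontrivial work has been done in the preceding two sections. The only delicate step is ordering the parameter choices correctly: one must first freeze $\lambda$, $R$, $Z$, and $\alpha$, then possibly shrink $\delta'$ to make the lemma applicable (verifying that shrinking $\delta'$ does not destroy any hypothesis), and only then produce $\delta$ and $\mathcal{U}$ via the lemma, so that the small-relative-to-$(K,K_{out},\alpha)$ condition and the $(\delta,\delta')$-tightness on the boundary can be read off in Definition \ref{def:barycentersuptoH} in the correct order. Once this bookkeeping is done, the conclusion is immediate from Theorem \ref{thm:lambdabarycentersimplycontractibility}.
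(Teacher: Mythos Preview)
Your proposal is correct and follows essentially the same approach as the paper: invoke Lemma \ref{lem:appl:constructingaboundarytightpushoffgrid} to produce a $(\mathcal{U},\pi)$-push-off grid that is $(\delta,\delta')$-tight on the boundary with $\iota(\mathcal{U}) \subset K_{out} \subset Z$, then verify the clauses of Definition \ref{def:barycentersuptoH} and conclude via Theorem \ref{thm:lambdabarycentersimplycontractibility}. Your treatment is in fact more careful than the paper's about the role of $\delta'$ (which appears in the hypothesis $Z \subset Y_{\epsilon+\delta'}$ but must also be small enough for the lemma), explicitly noting that shrinking $\delta'$ preserves all standing hypotheses.
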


\begin{proof}
    Choose $\lambda$, $R$, and $Z$ as in the assumptions of the theorem. By Lemma \ref{lem:appl:constructingaboundarytightpushoffgrid}, there exist $\delta, \delta'$ that are small relative to $K$, $K_{out}$ and $\pi$, a finite cover $\mathcal{U}$ of $K$ by sets of diameter $\leq \delta$ and a $(\mathcal{U}, \pi)$-push-off grid $\iota$ such that $\iota(\mathcal{U}) \subset K_{out} \subset Z$ and $\iota$ is $(\delta, \delta')$-tight on the boundary. Since $Z$ has $\lambda$-barycenters up to diameter $\diam_K(K_{out})$ and $\diam(\iota) \leq \diam_K(K_{out})$ by Lemma \ref{lem:diameterofiota}, Theorem \ref{thm:lambdabarycentersimplycontractibility} implies that $\Sigma_{\epsilon}$ is contractible.
\end{proof}




\subsection{Contractibility in the visual boundary} \label{subsec:contractibilityatinfinity}

Let $\Lambda$ be the limit set of the action of $H$ on $X$. If $C$ is the closed convex hull of $\Lambda$, Theorem \ref{thm:Andersonresult} by Anderson implies that $\partial_{\infty} C = \Lambda$. As discussed earlier, we have a homeomorphism
\[ \Phi_N^{\infty} : \partial C_{\epsilon} \rightarrow \partial_{\infty} X \setminus \Lambda \]
Thus, Theorem \ref{thm:appl:barycentersimplycontractibility} provides us a sufficient condition for contractibility of the connected components of $\partial_{\infty} X \setminus \Lambda$.

Let $Z$ be a connected component of $\partial_{\infty} X \setminus \Lambda$. We would like to have a variation of our result that uses an assumption on $Z$ instead of some $\Sigma_R$. Taking an overview of our proof, we see that our overall strategy relied on the following points: We construct a push-off grid that is $(\delta, \delta')$-tight. If we can do so for small $\delta, \delta'$, we can make sure that our extension of the grid to an actual push-off does not change angles along $\Sigma_{\epsilon}$ too much and that the convex hulls that we use in our extension do not intersect $C_{\epsilon}$. (These are two of the three properties in Definition \ref{def:smalldeltarelativetoR} that determine that $(\delta, \delta')$ are small relative to $K$, $K_{out}$, and $\alpha$.) Our entire proof works in the same way for a push-off grid $\iota : H\mathcal{U} \rightarrow Z$, provided that, given any angle $\alpha > \frac{\pi}{2}$ and any compact sets $K \subset C_{\epsilon}$ and $K_{out} \subset Z$, we can find small $(\delta, \delta')$ that satisfy a variation of properties (1), (2), and (3) of Definition \ref{def:smalldeltarelativetoR}. The following two statements tell us that this can be done.

\begin{lem}
    The function $(o,p,q) \mapsto \angle_o(p,q)$ continuously extends to a function
    \[ X \times \overline{X} \times \overline{X} \rightarrow [0,\pi] \]
    \[ (o,p,q) \mapsto \angle_o(p,q), \]
    where $\angle_o(p,q)$ is the angle between the geodesics from $o$ to $p$ and $o$ to $q$.
\end{lem}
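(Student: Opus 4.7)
The plan is to define the extension via initial velocity vectors in the tangent bundle, and then derive joint continuity from the defining properties of the cone topology. For $o \in X$ and $p \in \overline{X}$ with $p \neq o$, there is a unique unit-speed geodesic (a segment if $p \in X$, a ray if $p \in \partial_{\infty} X$) from $o$ to $p$; let $v_{op} \in S_o X$ denote its initial tangent vector. Existence and uniqueness follow from the Cartan--Hadamard theorem for $p \in X$ and from standard properties of visual boundaries for $p \in \partial_{\infty} X$. Define
\[ \angle_o(p,q) := \angle(v_{op}, v_{oq}), \]
computed as a Euclidean angle inside the inner product space $T_o X$. For $p, q \in X \setminus \{o\}$ this agrees with the Riemannian first variation definition, so the new function genuinely extends the old one.

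Next, I would reduce the continuity claim to joint continuity of the assignment $\Phi \colon \{(o,p) \in X \times \overline{X} : p \neq o\} \to SX$, $(o,p) \mapsto v_{op}$, where $SX$ denotes the unit tangent bundle. Given this, $\angle_o(p,q)$ is the image of the pair $(\Phi(o,p), \Phi(o,q))$ under the angle function on the fibre product $SX \times_X SX$, and that map is continuous in any local trivialization of the sphere bundle.

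To prove continuity of $\Phi$, I would split into two regimes. When $p \in X$, the exponential map $\exp_o \colon T_o X \to X$ is a diffeomorphism by Cartan--Hadamard, and both $\exp$ and its inverse depend smoothly on the basepoint $o$; hence $v_{op} = \exp_o^{-1}(p)/\|\exp_o^{-1}(p)\|$ is jointly continuous where defined. When $p \in \partial_{\infty} X$, I would exploit the defining property of the cone topology recalled in subsection \ref{subsec:GeometryNonPositiveCurvature}: the basis sets $U_{o,\epsilon,R}(\xi)$ are designed precisely so that $p_n \to \xi$ in $\overline{X}$ implies $\gamma_{o p_n}(R) \to \gamma_{o\xi}(R)$ for every $R > 0$ for which the geodesics are defined. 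Combined with continuous dependence of the geodesic ODE on the basepoint, this produces the joint statement $\gamma_{o_n p_n}(R) \to \gamma_{op}(R)$, uniformly on compact subintervals in $R$. From this, convergence of $v_{o_n p_n} = \gamma'_{o_n p_n}(0)$ to $v_{op}$ in $SX$ is immediate.

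The main obstacle will be establishing the joint convergence $\gamma_{o_n p_n}(R) \to \gamma_{op}(R)$ when $p \in \partial_{\infty} X$ and the basepoint $o_n$ also varies, since the cone topology is formally defined with a fixed basepoint. I would handle this via the triangle inequality decomposition
\[ d(\gamma_{o_n p_n}(R), \gamma_{op}(R)) \leq d(\gamma_{o_n p_n}(R), \gamma_{o p_n}(R)) + d(\gamma_{o p_n}(R), \gamma_{op}(R)), \]
where the second term vanishes by the defining property of the cone topology and the first is controlled by the convexity of $t \mapsto d(\gamma(t), \tilde{\gamma}(t))$ for two $\CAT$ geodesics sharing an endpoint, which forces $d(\gamma_{o_n p_n}(R), \gamma_{o p_n}(R)) \to 0$ as $o_n \to o$ with a bound independent of whether $p_n$ lies in $X$ or in $\partial_{\infty} X$. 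Mixed sequences (some $p_n \in X$, others in $\partial_{\infty} X$) require no separate treatment, as the cone topology provides a uniform characterization across both cases.
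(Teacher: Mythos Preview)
Your proposal is correct and captures the standard argument. The paper itself does not supply a proof of this lemma at all: immediately after the statement it simply writes ``This is a standard result'' and moves on to its consequences. So there is nothing to compare against; you have filled in precisely what the paper omits.

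Two minor remarks on the write-up. First, the lemma as stated in the paper is slightly sloppy about the domain (it writes $X \times \overline{X} \times \overline{X}$ although the angle is undefined when $p=o$ or $q=o$); you handle this correctly by restricting to $p \neq o$. Second, in your control of the first term $d(\gamma_{o_n p_n}(R), \gamma_{o p_n}(R))$, the ``shared endpoint'' convexity argument is immediate when $p_n \in \partial_{\infty} X$ (asymptotic rays, distance non-increasing, hence $\leq d(o_n,o)$), but when $p_n \in X$ the two segments have different lengths and one should invoke the $\CAT$ comparison triangle rather than bare convexity to get a clean bound. This is routine and does not affect the validity of your outline.
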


This is a standard result, which tells us that for $\delta, \delta'$ sufficiently small, property (3) of Definition \ref{def:smalldeltarelativetoR} is satisfied, even if $K_{out}$ is a compact subset of $Z$ instead of $Y_{\epsilon}$. The next result allows us to find a replacement for property (2).

\begin{lem} \label{lem:SmallDiameteratInfinity}
Let $o \in X$, $C \subset X$ a closed and convex subset, $K_{out} \subset \partial_{\infty} X \setminus \partial_{\infty} C$ compact. Then there exists $\delta' > 0$ such that for all sets $P \subset K$ with diameter at most $\delta'$, $C(P) \cap C = \emptyset$.
\end{lem}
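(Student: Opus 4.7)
The plan is to argue by contradiction, extract subsequential limits of a bad configuration, and use the \CATminus\ geometry to show that the convex hulls $C(P)$ concentrate in the cone topology near a single ideal point of $K_{out}$, which contradicts $K_{out} \cap \partial_{\infty} C = \emptyset$.

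\textbf{Step 1 (topological separation).} The cone topology makes $\overline{X} = X \cup \partial_{\infty} X$ compact Hausdorff, and $\overline{C} := C \cup \partial_{\infty} C$ is closed in it and disjoint from the compact set $K_{out}$. Choose a cone-topology open neighbourhood $U$ of $K_{out}$ with $U \cap \overline{C} = \emptyset$. It then suffices to find $\delta' > 0$ such that every $P \subset K_{out}$ with $\diam_{\rho_o}(P) \leq \delta'$ satisfies $C(P) \subset U$, since this automatically gives $C(P) \cap C = \emptyset$.

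\textbf{Step 2 (extraction).} Assuming no such $\delta'$ exists, pick $P_n \subset K_{out}$ with $\diam_{\rho_o}(P_n) \to 0$ and $y_n \in C(P_n) \setminus U$. Choose $\xi_n \in P_n$; by compactness of $K_{out}$ pass to a subsequence with $\xi_n \to \xi \in K_{out}$, and since diameters vanish, every point of $P_n$ converges to $\xi$ in the cone topology. By compactness of $\overline{X}$, extract further so that $y_n \to y \in \overline{X}$, and note $y \notin U$ because $U$ is open. The task is to force $y = \xi$, contradicting $\xi \in K_{out} \subset U$ and $y \notin U$.

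\textbf{Step 3 (concentration of the convex hull).} In the \CATminus\ setting the identity $(\eta|\eta')_o = -\log \rho_o(\eta, \eta')$ forces $(\eta|\eta')_o \to \infty$ uniformly in $\eta, \eta' \in P_n$. Standard $\delta$-hyperbolic thin-triangle estimates then give $d(o, \gamma_{\eta \eta'}) \to \infty$, and since both endpoints of $\gamma_{\eta \eta'}$ converge to $\xi$ while its points escape to infinity in the direction of $\xi$, the bi-infinite geodesic $\gamma_{\eta \eta'}$ is eventually contained in any prescribed cone-topology neighbourhood of $\xi$. Using that in a $\delta$-hyperbolic (in particular \CATminus) space the closed convex hull of a set $S \subset \partial_{\infty} X$ lies in a bounded neighbourhood of the union of bi-infinite geodesics between pairs of points of $S$, the whole $C(P_n)$ is also eventually in any prescribed cone-topology neighbourhood of $\xi$. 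Thus $y = \lim y_n = \xi$, the desired contradiction.

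\textbf{Main obstacle.} The technical heart of the argument is Step 3: upgrading control on the generating bi-infinite geodesics to control on the full convex hull in the cone topology. For finite configurations this follows from explicit thin-triangle arguments for ideal simplices; for a general compact $P_n$ one should approximate $C(P_n)$ by the closure of the union of convex hulls of finite subsets and pass to the limit. A cleaner alternative is to trap $C(P_n)$ inside a shrinking family of convex horoballs based at $\xi$, exploiting the convexity of horoballs in \CATminus\ to sidestep the need for a bounded-neighbourhood estimate on the full convex hull.
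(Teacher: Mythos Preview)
Your argument is correct and, like the paper's, ultimately hinges on the same key ingredient: in pinched negative curvature the closed convex hull of a set $S \subset \partial_\infty X$ lies in a uniformly bounded neighbourhood of the union of bi-infinite geodesics between pairs of points of $S$ (this is Bowditch's {\cite[Lemma 2.6]{Bowditch94}} in the paper). Once you invoke this, your ``main obstacle'' disappears: Bowditch's bound applies to arbitrary closed $P_n$, so there is no need to approximate by finite subsets.

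Where you differ from the paper is in packaging. The paper argues \emph{directly}: it fixes the Bowditch constant $r$ and the hyperbolicity constant $\delta$, uses compactness of $K_{out}$ to find a single $T$ such that every ray $\gamma_{o\eta}$ with $\eta \in K_{out}$ satisfies $\gamma_{o\eta}([T,\infty)) \cap N_{r+3\delta}(C) = \emptyset$, and then chooses $\delta'$ so small that all rays to $P$ $\delta$-fellow-travel at time $T$. A short chain of inclusions then traps $C(P)$ in $N_{r+2\delta}(\Gamma_{o,T})$, which misses $C$. Your contradiction-and-extraction route is conceptually clean and avoids naming constants, at the cost of being non-constructive; the paper's route gives an explicit $\delta'$ in terms of $T$, $r$, $\delta$.

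One caveat: your proposed ``cleaner alternative'' of trapping $C(P_n)$ in horoballs at $\xi$ does \emph{not} work. Already in $\mathbb{H}^2$ (upper half-plane, $o=i$, $\xi=\infty$), take $P_n = \{N, N+1\}$: both endpoints converge to $\infty$ in $\rho_o$, yet the geodesic between them peaks at height $\tfrac12$ independently of $N$, so it never enters the horoball $\{y>1\}$. The convex hulls do converge to $\xi$ in the cone topology (as your Step~3 correctly argues), but not by staying inside shrinking horoballs; Bowditch's bounded-neighbourhood estimate really is needed.
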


\begin{proof}
The key tool of the proof is {\cite[Lemma 2.6]{Bowditch94}}, which states that the convex hull of a closed set $P \in X \cup \partial_{\infty} X$ is contained in the $r$-neighbourhood of the set of all geodesics between points in $P$, where $r$ depends only on the lower curvature bound of $X$. Let $r$ denote the constant $\sigma$ in {\cite[Lemma 2.6]{Bowditch94}} and let $\delta > 0$ such that $X$ is $\delta$-hyperbolic. Since $K_{out}$ is compact, there exists a constant $T > 0$, such that for all geodesic rays $\gamma$ that start at $o$ and represent points in $K_{out}$, we have that $\gamma\vert_{[T,\infty)} \cap N_{r+3\delta}(C) = \emptyset$. Choose $\delta'$ sufficiently small, such that whenever $\rho_o(\xi, \tilde{\xi}) \leq \delta'$ and $\gamma, \tilde{\gamma}$ are the unique geodesic rays starting at $o$ representing $\xi$ and $\tilde{\xi}$ respectively, then $d(\gamma(T), \tilde{\gamma}(T)) \leq \delta$.

Let $P \subset K_{out}$ be a set of diameter at most $\delta'$ with respect to $\rho_o$. Replace $P$ by its closure, which still satisfies the same bound on its diameter. Define
\[ \Gamma_{o,T} := \bigcup_{\gamma : \gamma(0) = o, \gamma(\infty) \in P } \gamma\vert_{[T,\infty)}, \]
i.e.\,$\Gamma_{o,T}$ considers all geodesic rays starting at $o$ (at time zero) that represent points in $P$ and restricts them to the interval $[T, \infty)$. Fix some $\xi \in P$ and let $\gamma$ be its geodesic representative starting at $o$. We obtain
\[ \Gamma_{\gamma(T), 0} := \bigcup_{ \gamma' : \gamma'(0) = \gamma(T), \gamma(\infty) \in P} \gamma', \]
which consists of all geodesic rays starting at $\gamma(T)$ representing points in $P$. Since $X$ has non-positive curvature, $\Gamma_{\gamma(T), 0}$ is contained in the $\delta$-neighbourhood of $\Gamma_{o,T}$. By $\delta$-hyperbolicity, we see that the union of all bi-infinite geodesics between points in $P$ is contained in the $\delta$-neighbourhood of $\Gamma_{\gamma(T), 0}$. By Bowditch, the convex hull $C(P)$ of $P$ is contained in the $r$-neighbourhood of the union of these bi-infinite geodesics. We conclude that $C(P)$ is contained in the $r + 2\delta$-neighbourhood of $\Gamma_{o,T}$. Due to our choice of $T$, this neighbourhood does not intersect $C$. We conclude that $C(P) \cap C = \emptyset$, which proves the Lemma.
\end{proof}

Looking at the proof of Lemma \ref{lem:extendingpushoffgridtopushoff}, we see that condition (2) of Definition \ref{def:smalldeltarelativetoR} can be replaced with the condition that for every simplex $\sigma$ in the subdivision $S_n$, we have that
\[ C( \iota_n( \sigma^{(0)} ) ) \cap C_{\epsilon} = \emptyset. \]
This allows us to make the following definition.

\begin{mydef}
    Let $\alpha > \frac{\pi}{2}$, $K \subset C_{\epsilon}$ be compact such that $HK = C_{\epsilon}$, $K_{out} \subset Z$ be compact, and $\rho$ a fixed visual metric on $\partial_{\infty} X$. We say that $(\delta, \delta')$ are small relative to $K$, $K_{out}$, and $\alpha$ when the following two properties hold:
    \begin{enumerate}
        \item For all finite subsets $P \subset K_{out}$ with $\diam(P) \leq \delta'$, we have $C(P) \cap C_{\epsilon} = \emptyset$.

        \item For all $q_1, q_2 \in N_{\delta}(K) \cap \Sigma_{\epsilon}$ and all $q'_1, q'_2 \in K_{out}$, we have that
        \[ d(q_1, q_2) \leq \delta, \rho(q'_1, q'_2) \leq \delta' \Rightarrow \vert \angle_{q_1}(q'_1, C) - \angle_{q_2}(q'_2, C) \vert \leq \frac{\alpha}{2} - \frac{\pi}{4}. \]
    \end{enumerate}
\end{mydef}

Lemma \ref{lem:SmallDiameteratInfinity} tells us that for any map $\iota : H\mathcal{U} \rightarrow Z$, there exists some $\delta, \delta' > 0$ that are small relative to $K, \iota( \adj( \mathcal{U} ) )$, and $\alpha$. We can then generalize the definition of a push-off to include maps $j : N(H\mathcal{U}) \rightarrow Z$ and adjust the proof of Lemma \ref{lem:extendingpushoffgridtopushoff} to prove the following result.

\begin{lem} \label{lem:extendingtopushoffonboundary}
    Let $\lambda \in [\frac{1}{2},1)$, $\alpha > \frac{\pi}{2}$, $\mathcal{U}$ be a finite, $H$-fine cover of a compact set $K$ such that $HK = C_{\epsilon}$, and $\iota : H\mathcal{U} \rightarrow Z$ a $(\mathcal{U}, \alpha)$-push-off grid which is $(\delta, (1-\lambda) \delta' )$-tight on the boundary.
    
    Suppose $(\delta, \delta')$ is small relative to $K$, $\iota(\adj(\mathcal{U}) )$, and $\alpha$ and suppose $\iota$ admits an $H$-equivariant $\lambda$-shrinking subdivision $(S_n, \iota_n)$ of order $n$ which is $(\delta, \delta')$-tight, has push-off distance $> \delta'$, and satisfies $\iota_n : S_n^{(0)} \rightarrow Z$. Then $\iota_n$ can be extended to a $\mathcal{U}$-push-off through $\Sigma_{\epsilon}$.
\end{lem}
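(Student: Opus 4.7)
The plan is to mirror the proof of Lemma \ref{lem:extendingpushoffgridtopushoff}, reinterpreting each convex hull of vertex-images as the convex hull of a finite subset of $\partial_\infty X$ (which lives in $X$ and is closed and convex by Theorem \ref{thm:Andersonresult} together with Bowditch's argument cited in Lemma \ref{lem:SmallDiameteratInfinity}). First I would set $j(U) := \iota_n(U) \in Z$ for every vertex $U$ of the subdivision $S_n$, and then inductively extend $j$ to the $(l+1)$-skeleton of $S_n$ under the inductive hypothesis that, for every simplex $\sigma$ on which $j$ is defined with vertex images $\iota_n(U_1),\ldots,\iota_n(U_{l+2})$, one has $j(\sigma) \subset C(\iota_n(\sigma^{(0)}))$. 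Contractibility of $C(\iota_n(\sigma^{(0)}))$ (a closed, convex subset of $X$) lets me extend $j$ across each simplex, and $H$-fineness of $\mathcal{U}$ plus $H$-equivariance of $\iota_n$ permits an $H$-equivariant extension to translates exactly as in the original proof.

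The first verification is that the extended $j$ sends every simplex of $S_n$ into $Y_\epsilon$. By construction, for each simplex $\sigma$ of $S_n$, $j(\sigma) \subset C(\iota_n(\sigma^{(0)}))$. By $(\delta,\delta')$-tightness of $\iota_n$, the set $\iota_n(\sigma^{(0)}) \subset Z$ has visual diameter at most $\delta'$, and by $H$-equivariance it is contained in a single $H$-translate of $\iota(\adj(\mathcal{U}))$. The first clause of the new smallness definition then guarantees $C(\iota_n(\sigma^{(0)})) \cap C_\epsilon = \emptyset$, so $j(\sigma) \subset Y_\epsilon$. Note that the push-off distance requirement from the original proof is automatic here, since every point of $Z$ lies at infinity and thus at arbitrarily large metric distance from $C_\epsilon$.

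The second verification is the angle condition $\angle_q(j \circ \Psi(q), C) > \pi/2$ for $q \in \Sigma_\epsilon$. By $H$-equivariance it suffices to treat $q \in \Sigma_\epsilon \cap K$. Pick $U \in \mathcal{U}_\Sigma$ with $q \in U$; the push-off grid property furnishes some $q_0 \in U$ with $\angle_{q_0}(\iota(U), C) \geq \alpha$, while $d(q,q_0) \leq \delta$ by $(\delta,\delta')$-tightness of $\mathcal{U}$. The map $\Psi$ sends $q$ into a simplex $\sigma$ of $N(H\mathcal{U}_\Sigma)$ incident to $U$, and $j \circ \Psi(q)$ lies in $C(\iota_n(\sigma_n^{(0)}))$ for the containing subdivision-simplex $\sigma_n \subset \sigma$. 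Combining statements (1) and (2) of Lemma \ref{lem:Diametersinshrinkingsubdivisions} with the fact that $\iota$ is $(\delta,(1-\lambda)\delta')$-tight on the boundary, the visual distance between $\iota(U)$ and any vertex of $\sigma_n$ (and hence between $\iota(U)$ and $j \circ \Psi(q)$, after invoking the first smallness clause to keep the convex hull contained in a small ball around $\iota(U)$) is bounded by $\delta'$. The second clause of the new smallness definition then yields
\[ \angle_q(j \circ \Psi(q), C) \geq \angle_{q_0}(\iota(U), C) - \left( \tfrac{\alpha}{2} - \tfrac{\pi}{4} \right) \geq \tfrac{\alpha}{2} + \tfrac{\pi}{4} > \tfrac{\pi}{2}. \]

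The main obstacle is the bookkeeping around convex hulls at infinity and the switch between metric and visual distances: one must confirm that "diameter of $\iota_n$" translates correctly to visual diameter on $\partial_\infty X$; that the Bowditch-style estimate packaged into the new smallness clause (1) controls how close $C(\iota_n(\sigma^{(0)}))$ comes to $C_\epsilon$, not merely to $C$; and that the angle function $\angle_q(\cdot, C)$ extends continuously to $\overline{X}$ in its second argument, so that the estimate comparing $\angle_{q_0}(\iota(U),C)$ with $\angle_q(j\circ\Psi(q),C)$ remains valid when one endpoint sits on $\partial_\infty X$ and the other lies inside the convex hull in $X$. None of these technicalities is deep in isolation, but each must be threaded through the induction carefully, since the image of $\iota_n$ sits in $\partial_\infty X$ rather than in $Y_\epsilon$.
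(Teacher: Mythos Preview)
Your proposal is correct and follows exactly the approach the paper indicates: the paper does not give a separate proof of this lemma but simply states that one should ``adjust the proof of Lemma~\ref{lem:extendingpushoffgridtopushoff}'' using the new smallness definition together with Lemma~\ref{lem:SmallDiameteratInfinity} and the continuity of angles on $\overline{X}$. Your sketch carries out precisely this adaptation and, in your final paragraph, correctly flags the only nontrivial bookkeeping issues (that $j$ really lands in $X\cup Z\subset\overline{X}$ rather than in $Y_\epsilon$ or in $Z$ alone, and that the angle comparison must be justified via continuity on $\overline{X}$ since $j\circ\Psi(q)$ may lie in the convex hull in $X$ while $\iota(U)$ lies in $\partial_\infty X$).
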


With these modifications, Theorem \ref{thm:appl:barycentersimplycontractibility} turns into the following Theorem.

\begin{thm} \label{thm:appl:contractibilityinboundary}
    Let $X$ be a Cartan-Hadamard manifold with sectional curvature in $[-b^2, -1]$, $H$ be a group acting properly, freely, and by isometries on $X$, $C \subset X$ be a closed, $H$-invariant, $H$-cocompact convex subset, $\epsilon > 0$, $\Sigma_{\epsilon}$, an $H$-invariant connected component of $\partial C_{\epsilon}$, and $Z$ the connected component of $\partial_{\infty} X \setminus \partial_{\infty} C$ corresponding to $\Sigma_{\epsilon}$.

    Let $K \subset C_{\epsilon}$ be a compact set such that $HK = C_{\epsilon}$ and $K_{out} := \Phi_N^{\infty}( K \cap \Sigma_{\epsilon} )$, where $\Phi_n^{\infty} = \lim_{t \rightarrow \infty} \Phi_n^t$.

    If there exists $\lambda \in [\frac{1}{2}, 1)$ such that $Z$ has $\lambda$-barycenters up to diameter $\diam_K(K_{out})$, then $Z$ is contractible.
\end{thm}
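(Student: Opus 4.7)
The plan is to mirror the proof of Theorem \ref{thm:appl:barycentersimplycontractibility}, but invoking the ``at-infinity'' versions of the auxiliary lemmas outlined at the end of Section \ref{subsec:contractibilityatinfinity}. Specifically, I would replace the finite-time flow $\Phi_N^{R-\epsilon}$ by the boundary map $\Phi_N^\infty$ and send the push-off grid directly into $Z \subset \partial_\infty X \setminus \partial_\infty C$. The main work consists of constructing a push-off grid valued in $K_{out}$ that is tight on the boundary, using the $\lambda$-barycenter hypothesis to subdivide it into a globally tight grid, and then invoking Lemma \ref{lem:extendingtopushoffonboundary} to obtain a retraction $C_\epsilon \to \Sigma_\epsilon$ whose contractibility then transfers to $Z$ via $\Phi_N^\infty$.

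First I would construct the at-infinity analogue of Lemma \ref{lem:appl:constructingaboundarytightpushoffgrid}. Fix a visual metric $\rho$ on $\partial_\infty X$ (available since $X$ is $\CATminus$), fix $\alpha > \pi/2$, and pick $\delta' > 0$ small. By Lemma \ref{lem:SmallDiameteratInfinity} together with continuity of the extended angle $(o,p,q) \mapsto \angle_o(p,q)$ on $X \times \overline{X} \times \overline{X}$, I can choose $\delta > 0$ so that $(\delta, \delta')$ is small relative to $K$, $K_{out}$, and $\alpha$ in the boundary sense. By uniform continuity of $\Phi_N^\infty$ on the compact set $K \cap \Sigma_\epsilon$ with target $(\partial_\infty X, \rho)$, a further shrinkage of $\delta$ guarantees that $d(p,q) \leq 2\delta$ with $p,q \in \Sigma_\epsilon$ forces $\rho(\Phi_N^\infty(p), \Phi_N^\infty(q)) \leq \delta'$. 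I then take an $H$-fine cover $\mathcal{U}$ of $K$ by sets of diameter $\leq \delta$, and define $\iota$ on each $U \in \mathcal{U}_\Sigma$ by picking $q_U \in U \cap \Sigma_\epsilon$ and setting $\iota(U) := \Phi_N^\infty(q_U) \in K_{out}$, extending $H$-equivariantly. Since the integral curve of $N$ through $q_U$ is a geodesic ray meeting $\Sigma_\epsilon$ orthogonally, $\angle_{q_U}(\iota(U), C) = \pi$, so $\iota$ is a $(\mathcal{U}, \pi)$-push-off grid that is $(\delta, \delta')$-tight on the boundary.

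Next, since $Z$ has $\lambda$-barycenters up to diameter $\diam_K(K_{out}) \geq \diam(\iota)$ (by Lemma \ref{lem:diameterofiota}), Lemma \ref{lem:barycentersimplyshrinkingsubdivisions} iterated $n$ times produces an $H$-equivariant $\lambda$-shrinking subdivision $(S_n, \iota_n)$ of order $n$ with $\iota_n$ valued in $Z$. Lemma \ref{lem:Diametersinshrinkingsubdivisions} shows that for $n$ large enough (depending only on $\lambda$, $\delta'$, and $\diam_K(K_{out})$) the pair $(S_n, \iota_n)$ becomes globally $(\delta, \delta')$-tight, and after shrinking $\delta'$ via Lemma \ref{lem:SmallDiameteratInfinity} the convex hulls $C(\iota_n(\sigma^{(0)}))$ of simplices $\sigma$ in $S_n$ stay disjoint from $C_\epsilon$, which is the boundary substitute for the push-off distance condition. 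Lemma \ref{lem:extendingtopushoffonboundary} then extends $\iota_n$ to a $\mathcal{U}$-push-off $j : N(H\mathcal{U}) \to Z$. The argument of Lemma \ref{lem:pushoffimpliesretraction} transfers verbatim: for each $q \in C_\epsilon$, the unique geodesic ray from $q$ to $j \circ \Psi(q) \in Z$ crosses $\Sigma_\epsilon$ in exactly one point, producing a continuous retraction $r : C_\epsilon \to \Sigma_\epsilon$. By Lemma \ref{lem:retractionimpliescontractibility}, $\Sigma_\epsilon$ is contractible, and since $\Phi_N^\infty$ restricts to a homeomorphism $\Sigma_\epsilon \to Z$, so is $Z$.

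The main obstacle I anticipate is the at-infinity analogue of the push-off distance condition: when the subdivided grid $\iota_n$ takes values in $\partial_\infty X$, the simplicial convex hulls $C(\iota_n(\sigma^{(0)}))$ are unbounded and could a priori accumulate on $\partial_\infty C$. The crucial input is Lemma \ref{lem:SmallDiameteratInfinity}, whose Bowditch-style argument provides a visual-metric threshold $\delta'$ below which convex hulls of subsets of $K_{out}$ stay uniformly away from $C$, and hence, after a small additional shrinkage, from $C_\epsilon$ as well. A secondary technical point is continuity of $r$ at points where the target $j \circ \Psi(q)$ moves at infinity; this reduces to continuity of geodesic rays in the cone topology of $\overline{X}$, combined with the positive-angle transversality at $\Sigma_\epsilon$ supplied by the $(\mathcal{U}, \pi)$-push-off property and convexity of the distance function to $C$.
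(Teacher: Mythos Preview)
Your proposal is correct and follows essentially the same approach as the paper, which does not give an explicit proof but simply states that ``with these modifications, Theorem \ref{thm:appl:barycentersimplycontractibility} turns into'' the present theorem; you have correctly identified and carried out those modifications (boundary-valued push-off grid via $\Phi_N^\infty$, the at-infinity smallness condition supplied by Lemma \ref{lem:SmallDiameteratInfinity} and angle continuity on $\overline{X}$, iterated barycentric subdivision in $Z$, and Lemma \ref{lem:extendingtopushoffonboundary}). One small bookkeeping point: to invoke Lemma \ref{lem:extendingtopushoffonboundary} you need $(\delta,(1-\lambda)\delta')$-tightness on the boundary rather than $(\delta,\delta')$-tightness, and the uniform continuity of $\Phi_N^\infty$ should be taken on the compact $N_{2\delta}(K)\cap\Sigma_\epsilon$ rather than $K\cap\Sigma_\epsilon$ to handle adjacent pairs coming from different $H$-translates; both are harmless adjustments of constants.
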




\bibliographystyle{alpha}
\bibliography{Codim1bib}

\end{document}